\newtheorem{theorem}{Theorem}[section]
\newtheorem{lemma}[theorem]{Lemma}
\newtheorem{corollary}[theorem]{Corollary}
\newtheorem{proposition}[theorem]{Proposition}
\theoremstyle{definition}
\newtheorem{setting}[theorem]{Setting}
\newtheorem{notation}[theorem]{Notation}
\newtheorem{remark}[theorem]{Remark}
\newtheorem{definition}[theorem]{Definition}
\numberwithin{equation}{section}
\newtheorem{example}[theorem]{Example}
\def\coker{\mathop{\rm coker}}
\def\sym{\mathop{\rm Sym}}
\def\fitt{\mathop{\rm Fitt}}
\def\max{\mathop{\rm max}}
\def\rk{\mathop{\rm rank}}
\def\quot{\mathop{\rm Quot}}
\def\spec{\mathop{\rm Spec}}
\def\hgt{\mathop{\rm ht}}
\def\dim{\mathop{\rm dim}}
\def\sup{\mathop{\rm sup}}
\def\gr{\mathop{\rm gr}}
\def\dep{\mathop{\rm depth}}
\def\grd{\mathop{\rm grade}}
\def\reg{\mathop{\rm reg}}
\def\p{\mathfrak{p}}
\def\a{\mathfrak{a}}
\def\m{\mathfrak{m}}
\def\n{\mathfrak{n}}
\def\tt{\mathfrak{t}}
\def\A{\mathcal{A}}
\def\D{\mathcal{D}}
\def\LL{\mathscr{L}}
\def\B{\mathcal{B}}
\def\rr{\mathcal{R}}
\def\ff{\mathcal{F}}
\def\N{\mathbb{N}}
\def\det{\operatorname{det}}
\def\mod{\operatorname{mod}}
\def\quot{\operatorname{Quot}}
\def\x{\underline{x}}
\def\a{\underline{a}}
\def\T{\underline{T}}
\def\fitt{{\rm Fitt}}
\def\rt{{\rm rt}}
\def\J{\mathcal{J}}
\def\H{\mathcal{H}}
\def\K{\mathcal{K}}
\def\BB{\mathcal{B}}  
\def\BL{\mathcal{L}}
\crefname{lemma}{lemma}{lemmas}
\Crefname{lemma}{Lemma}{Lemmas}
\crefname{observation}{observation}{observations}
\Crefname{observation}{Observation}{Observations}
\crefname{algorithm}{algorithm}{algorithms}
\Crefname{algorithm}{Algorithm}{Algorithms}
\crefname{setting}{setting}{settings}
\Crefname{setting}{Setting}{Settings}
\crefname{theorem}{theorem}{theorems}
\Crefname{theorem}{Theorem}{Theorems}
\crefname{proposition}{proposition}{propositions}
\Crefname{proposition}{Proposition}{Propositions}
\crefname{remark}{remark}{remarks}
\Crefname{Remark}{Remark}{Remarks}
\crefname{corollary}{corollary}{corollaries}
\Crefname{Corollary}{Corollary}{Corollaries}
\crefname{definition}{definition}{definitions}
\Crefname{definition}{Definition}{Definitions}
\crefname{notation}{notation}{notations}
\Crefname{notation}{Notation}{Notations}
\crefname{section}{section}{sections}
\Crefname{section}{Section}{Sections}
\title{On Rees Algebras of Ideals and Modules over Hypersurface Rings}
\author{
  Matthew Weaver
}
\address{Department of Mathematics, Purdue University, West Lafayette IN 47907, USA}
\email{weaver24@purdue.edu}
\date{}
\begin{document}
\maketitle

\begin{abstract}
The acquisition of the defining equations of Rees algebras is a natural way to study these algebras and allows certain invariants and properties to be deduced. In this paper, we consider Rees algebras of codimension 2 perfect ideals of hypersurface rings and produce a minimal generating set for their defining ideals. Then, using generic Bourbaki ideals, we study Rees algebras of modules with projective dimension one over hypersurface rings. We describe the defining ideal of such algebras and determine Cohen-Macaulayness and other invariants.

\end{abstract}

\section{Introduction}\label[section]{intro}
In this paper we study the defining ideal of the Rees algebra of an ideal or module. For $I$ an ideal of a Noetherian ring $R$, the Rees algebra of $I$ is the graded ring $\rr(I) = R[It] = R\oplus It \oplus I^2t^2\oplus \cdots$. Whereas this is an algebraic construction, much of the motivation to study these rings is geometric. Indeed, the Rees algebra $\rr(I)$ is the algebraic realization of the blowup of $\spec(R)$ along the subscheme $V(I)$. To study these rings, we remark there is a natural epimorphism of $R$-algebras $\Psi:R[T_1,\ldots,T_n] \rightarrow \rr(I)$ given by $\Psi(T_i)= \alpha_it$ where $I=(\alpha_1,\ldots,\alpha_n)$. The kernel $\J$ of this map is the \textit{defining ideal} of $\rr(I)$. This ideal and its generators, the \textit{defining equations} of $\rr(I)$, have been studied extensively under various conditions (see e.g. \cite{MU,KPU2,Morey,UV,KM,Vasconcelos,HSV1,Johnson,CHW,KPU1,BCS}).

This notion can be extended to Rees algebras of modules. If $E$ is an $R$-module with rank, the Rees algebra of $E$ is defined as the symmetric algebra $\sym(E)$ modulo its $R$-torsion submodule. It is important to realize this definition agrees with the previous one if $E$ is an $R$-ideal of positive grade. Once more, the motivation is geometric as these are the rings which occur in the process of successive blowups. As before, if one specifies a generating set of $E=Ra_1+\cdots+Ra_n$, there is a natural map $\Psi:R[T_1,\ldots,T_n] \rightarrow \rr(E)$ given by $\Psi(T_i)= a_i \in [\rr(E)]_1$ and the kernel of this map is the defining ideal of $\rr(E)$. Whereas Rees algebras of ideals have been studied to great extent, the treatment of Rees algebras of modules is still unclear in full generality. However, much progress has been made in recent years (see e.g. \cite{ReesAlgebrasOfModules,Costantini,CD,SUV1,Lin,EHU}). Many of the difficulties have been greatly reduced with the innovation of \textit{generic Bourbaki ideals}. With this, one is able to reduce the study of $\rr(E)$ and its defining ideal to the study of the Rees algebra of an ideal where information is much more readily available.

Despite the amount of previous work mentioned, most results where the defining equations are understood pertain to Rees algebras of $R$-ideals and $R$-modules for $R=k[x_1,\ldots,x_d]$, a polynomial ring over a field $k$. For this reason, we concern ourselves with quotients of a polynomials ring and remark there is strong geometric motivation to consider Rees algebras of ideals and modules in this setting. These rings will provide insight into blowups of more general schemes and rational maps between more general projective varieties. In particular, if $R$ is a graded quotient of $k[x_1,\ldots,x_d]$, it is interesting to consider the module of K\"{a}hler differentials $\Omega_k(R)$ and its Rees ring. The special fiber ring $\rr\big(\Omega_k(R)\big) \otimes k$ is exactly the homogeneous coordinate ring of the tangential variety of the projective variety with homogeneous coordinate ring $R$. 

We begin with Rees algebras of ideals and recall a few known results. In \cite{MU} Morey and Ulrich gave a complete description of the defining ideal of the Rees algebra of a linearly presented perfect ideal of grade 2 of $k[x_1,\ldots,x_d]$. In \cite{BM} Boswell and Mukundan extended this to the case where $I$ is again a perfect ideal of grade 2 of $k[x_1,\ldots,x_d]$, but is almost linearly presented. In other words, the presentation matrix of $I$ consists of linear forms with the exception of the last column which consists of homogeneous forms of the same degree $m\geq 1$.

We begin this paper by considering linearly presented ideals of a hypersurface ring $R=k[x_1,\ldots,x_{d+1}]/(f)$ where $f$ is a homogeneous polynomial of degree $m\geq 1$. By establishing similarities between linear presentation within hypersurface rings to almost linear presentation within polynomial rings, we are able to adapt the techniques of Boswell and Mukundan to this setting and generalize the result of Morey and Ulrich to Rees algebras of ideals in these rings. The main results regarding Rees algebras of ideals, \Cref{mainresult,depth}, are reformulated below.

\begin{theorem}\label{Theorem1}
Let $S=k[x_1,\ldots,x_{d+1}]$ be a polynomial ring over field $k$, $f\in S$ a homogeneous polynomial of degree $m$, and $R=S/(f)$. Let $I$ be a perfect $R$-ideal of grade 2 with presentation matrix $\varphi$ consisting of linear entries. Let $\overline{\,\cdot\,}$ denote images modulo $(f)$. If $I$ satisfies $G_d$, $I_1(\varphi) = \overline{(x_1,\ldots,x_{d+1})}$, and $\mu(I) = d+1$, then the defining ideal $\J$ of $\rr(I)$ satisfies
$$\J = \overline{\BL_m+(\det \BB_m)}$$
where the pair $(\BB_m,\BL_m)$ is the $m^{\text{th}}$ modified Jacobian dual iteration of $(B,\LL)$ for $B$ a modified Jacobian dual and $\LL=(\x\cdot B)$. Additionally, $\rr(I)$ is Cohen-Macaulay if and only if $m=1$ and is almost Cohen-Macaulay otherwise.
\end{theorem}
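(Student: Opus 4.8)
The plan is to reduce the problem to the polynomial ring $S$ by exploiting the surjection $\Psi' : S[T_1,\ldots,T_{d+1}] \twoheadrightarrow \rr(I)$ and the fact that $f$ is among the defining equations (since $f\cdot 1 = 0$ in $\rr(I)$). First I would lift the presentation matrix $\varphi$ over $R$ to a matrix $\widetilde{\varphi}$ over $S$ whose first $d$ columns are linear and whose last column encodes the Koszul-type syzygy coming from the relation $f = \sum x_i g_i$; this is exactly the ``almost linear presentation'' situation of Boswell--Mukundan, with the degree-$m$ column playing the role of their non-linear column. I would then invoke their machinery to produce the Jacobian dual $B$, the iterated pairs $(\BB_j,\BL_j)$, and the description $\widetilde{\J} = \BL_m + (\det\BB_m)$ of the defining ideal over $S$, and finally pass to $R$ by reducing modulo $f$, checking that $\overline{\BL_m + (\det\BB_m)}$ is still the full defining ideal (i.e. that reducing mod $f$ introduces no new relations beyond $f$ itself, which is where the hypotheses $G_d$, $I_1(\varphi)=\overline{\m}$, and $\mu(I)=d+1$ enter, guaranteeing the correct codimension/height estimates so that the containment $\overline{\BL_m+(\det\BB_m)}\subseteq\J$ is an equality).

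For the Cohen--Macaulay dichotomy: when $m=1$, the lifted ideal over $S$ is genuinely linearly presented and satisfies $G_d$, so by Morey--Ulrich $\rr_S(\widetilde I)$ is Cohen--Macaulay; I would then show $\rr(I) = \rr_S(\widetilde I)/(f)$ with $f$ a nonzerodivisor of degree $1$ on $\rr_S(\widetilde I)$, hence $\rr(I)$ is Cohen--Macaulay of dimension one less. When $m\geq 2$, the strategy is to compute depth via the approximation complex or the Artin--Nagata-style analysis already set up in the proof of \Cref{depth}: one shows $\rr(I)$ is \emph{not} Cohen--Macaulay by exhibiting that the last column of degree $m$ forces the Jacobian dual iteration to produce a defining ideal that is one short of a regular sequence modulo a system of parameters, while still showing the Koszul homology vanishes in all but the top degree, giving ``almost Cohen--Macaulay'' (depth $=\dim - 1$). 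Concretely, I would track the graded canonical module or the local cohomology $H^{\dim-1}_{\m}(\rr(I))$ and show it is nonzero precisely when $m\geq 2$, the extra generator $\det\BB_m$ having degree governed by $m$.

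The main obstacle I anticipate is the passage from $S$ to $R$: one must verify that the Boswell--Mukundan description, which is proved for ideals of a \emph{polynomial} ring with an almost linear presentation, transfers faithfully once we quotient by $f$ — in particular that $f$ behaves well with respect to the Jacobian dual iteration (that $\det\BB_m$ over $S$ reduces to a genuine minimal generator over $R$ and is not absorbed into $\overline{\BL_m}$), and that no lower-degree ``sporadic'' relations appear in $\J$ that were invisible over $S$. Establishing that $f$ is part of a regular sequence on the relevant symmetric/Rees algebras over $S$, and controlling the $G_d$ condition under the correspondence between $I\subseteq R$ and $\widetilde I\subseteq S$, is the technical heart; once that is in place the CM/almost-CM statement follows by a depth count on the two-term complex $0\to\rr_S(\widetilde I)\xrightarrow{f}\rr_S(\widetilde I)\to\rr(I)\to 0$ combined with the known depth of $\rr_S(\widetilde I)$ from the almost-linear case.
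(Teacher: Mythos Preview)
Your proposal has a genuine gap: there is no almost linearly presented perfect grade-$2$ ideal $\widetilde I\subseteq S$ to which the Boswell--Mukundan theorem applies directly. In their setting every generator of $\LL$ has $T$-degree $1$, since each arises as $[T_1,\ldots,T_n]\cdot(\text{column of }\varphi)$. Here the extra generator is $f$ itself, of $T$-degree $0$; it is expressed as $\x\cdot\partial f$ (this is the ``modified'' in \emph{modified} Jacobian dual), not as $\T\cdot(\text{column})$. Consequently $B=[B(\psi)\,|\,\partial f]$ is not the Jacobian dual of any presentation of an $S$-ideal, and the ring $S[T_1,\ldots,T_{d+1}]/\A$ is not the Rees algebra of anything over $S$. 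A concrete symptom: in Boswell--Mukundan the saturation is automatically prime, being a Rees defining ideal over a domain, whereas here $\A$ is prime only when $f$ is irreducible --- the paper explicitly flags this as the reason their arguments do not carry over verbatim and must be reworked.

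The deformation step you sketch also breaks. For the only natural lift $\widetilde I=J$ (the $S$-ideal presented by the linear matrix $\psi$) one has $J$ of linear type, and $\rr_S(J)/(f)\cong S[T]/\LL\cong\sym_R(I)$, \emph{not} $\rr_R(I)$. Thus $f$ cuts out the symmetric algebra, not the Rees algebra, and your two-term complex computes the depth of $\sym(I)$ rather than of $\rr(I)$. What the paper does instead is study the full kernel $\overline{\A}$ of $\rr(J)\twoheadrightarrow\rr(I)$: it identifies $\overline{\A}$ divisorially as $\overline{f}\,\overline{\K}^{(m)}/\overline{x_{d+1}}^{\,m}$ for an explicit height-one Cohen--Macaulay ideal $\overline{\K}\subseteq\rr(J)$, shows separately that the modified Jacobian dual iterations compute $\overline{f}\,\overline{\K}^{m}/\overline{x_{d+1}}^{\,m}$, and then closes the gap by proving $\overline{\K}^m=\overline{\K}^{(m)}$ via strong Cohen--Macaulayness and $G_\infty$ for $\overline{\K}$. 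The depth and almost-Cohen--Macaulay claims then follow from the sequence $0\to\overline{\A}\to\rr(J)\to\rr(I)\to 0$ combined with a free resolution of $\overline{\K}^m\cong\overline{\A}$ over $\rr(J)$, not from regularity of $f$ on any Rees ring.
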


This result relies on the construction of the \textit{modified Jacobian dual} $B$ of a particular matrix $\psi$ with linear entries in the polynomial ring $S$. We then introduce the method of \textit{modified Jacobian dual iterations} to obtain the pair $(\BB_m,\BL_m)$ above and a minimal generating set of $\J$. We present this algorithm as a refinement of the one used by Boswell and Mukundan.

We now consider Rees algebras of modules and start by recalling some known results. By introducing generic Bourbaki ideals, in \cite{ReesAlgebrasOfModules} Simis, Ulrich, and Vasconcelos determined the defining equations of $\rr(E)$ for a module $E$ over $k[x_1,\ldots,x_d]$ of projective dimension one with linear presentation. Their proof relies on the fact that a generic Bourbaki ideal $I$ is a perfect ideal of grade 2 and $\rr(E)$ is a deformation of $\rr(I)$. With this, the structure of the defining ideal of $\rr(E)$ can be deduced from the defining ideal of $\rr(I)$ which is known from \cite{MU}. In \cite{Costantini} Costantini extended this by considering a module $E$ of projective dimension one with almost linear presentation. It was shown that the module $E$ admits a generic Bourbaki ideal $I$ which is almost linearly presented as well and there is a similar relation between $\rr(E)$ and $\rr(I)$. From there, Boswell and Mukundan's techniques were adapted to the module setting in order to produce the defining equations of $\rr(E)$.

Once more, we consider the situation where the ring $R$ is a hypersurface ring. Let $R= k[x_1,\ldots,x_{d+1}]/(f)$ and consider $E$, a linearly presented $R$-module with projective dimension one. We show that $E$ admits a generic Bourbaki ideal $I$ which is perfect of grade 2 in a hypersurface ring. Following the path laid out in \cite{Costantini} we show that, after a generic extension, $\rr(E)$ is a deformation of $\rr(I)$ and the defining ideals of these Rees rings are of a similar form. We then derive the defining equations of $\rr(E)$ from those of $\rr(I)$ which are known by \Cref{Theorem1}. A reformulation of the main result regarding Rees algebras of modules, \Cref{mainresultformodules}, is stated below.

\begin{theorem}
Let $S=k[x_1,\ldots,x_{d+1}]$ be a polynomial ring over field $k$ with $d\geq 2$, $f\in S$ a homogeneous polynomial of degree $m$, and $R=S/(f)$. Let $E$ be a finite $R$-module of projective dimension one with rank $e$ minimally generated by $n$ homogeneous elements of the same degree. Suppose the presentation matrix $\varphi$ of $E$ consists of linear entries and let $\overline{\,\cdot\,}$ denote images modulo $(f)$. If $E$ satisfies $G_d$, $I_1(\varphi) = \overline{(x_1,\ldots,x_{d+1})}$, and $n =d+e$, then the defining ideal $\J$ of $\rr(E)$ satisfies 
$$\J =\overline{\BL_m + (\det \BB_m)}$$
where the pair $(\BB_m,\BL_m)$ is the $m^{\text{th}}$ modified Jacobian dual iteration of $(B,\LL)$ for $B$ a modified Jacobian dual and $\LL=(\x\cdot B)$. Additionally, $\rr(E)$ is Cohen-Macaulay if and only if $m=1$ and is almost Cohen-Macaulay otherwise.
\end{theorem}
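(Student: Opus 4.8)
The plan is to reduce the module case to the ideal case (Theorem~\ref{Theorem1}) via the machinery of generic Bourbaki ideals, following the strategy of \cite{ReesAlgebrasOfModules} and especially \cite{Costantini}, but now over the hypersurface ring $R = S/(f)$. First I would pass to a generic extension: adjoin indeterminates $z_{ij}$ to $R$, localize, and form the generic Bourbaki ideal $I$ of $E$, so that after this faithfully flat base change $E$ has a generic Bourbaki ideal $I \subseteq R' = R(\underline{z})$ with $E' \cong (R')^{e-1} \oplus I$ up to the usual identification. The key structural facts to establish are: (i) $I$ is a perfect $R'$-ideal of grade $2$ (this follows because $\projdim_R E = 1$ forces $\projdim_{R'} I = 1$, hence grade $2$ perfect by Auslander--Buchsbaum, since $R'$ is a hypersurface ring and the Bourbaki sequence $0 \to (R')^{e-1} \to E' \to I \to 0$ transfers a length-one resolution); (ii) $I$ inherits linear presentation with $I_1(\varphi_I) = \overline{(x_1,\dots,x_{d+1})}$ and satisfies $G_d$; and (iii) $\mu(I) = n - (e-1) = d+1$, matching the hypothesis of Theorem~\ref{Theorem1}. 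These are exactly the module-to-ideal translations carried out in \cite{Costantini} in the almost-linear case over a polynomial ring, and I would check that each argument survives the replacement of the polynomial ring by a hypersurface ring --- the presentation-matrix bookkeeping is insensitive to the ambient ring, and the genericity of the $z_{ij}$ is what guarantees $G_d$ and the grade/perfection statements.

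Next I would establish the deformation relationship: $\rr(E')$ is a deformation of $\rr(I)$, i.e.\ there is a regular sequence in $\rr(E')$ (built from the generic elements defining the Bourbaki ideal) whose quotient is $\rr(I)$, and correspondingly the defining ideal $\J$ of $\rr(E')$ maps onto the defining ideal $\J_I$ of $\rr(I)$ modulo that sequence. This is the step where one must be careful: one needs $E$ to be ``of linear type on the punctured spectrum'' or the analogous $G_d$-type condition to ensure the generic Bourbaki ideal is an honest ideal of the right grade and that $\sym(E)$ and $\rr(E)$ agree in the relevant codimensions. Granting this, the defining equations of $\rr(I)$ are given by Theorem~\ref{Theorem1} as $\overline{\BL_m + (\det \BB_m)}$, and since the construction of the modified Jacobian dual $B$, the iterations $(\BB_m, \BL_m)$, and the forms $\LL = (\x \cdot B)$ all depend only on the presentation matrix $\varphi$ of $E$ (equivalently of $I$), the formula lifts verbatim to $\rr(E')$: one obtains $\J = \overline{\BL_m + (\det \BB_m)}$ because the deformation does not alter the shape of the equations, only reinterprets the $T_i$'s. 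Finally, I would descend from $R'$ back to $R$: the defining ideal, minimality of the generating set, and the Cohen--Macaulay property all descend along the faithfully flat map $R \to R'$, so the statement holds over $R$ itself.

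For the Cohen--Macaulayness and ``almost Cohen--Macaulay otherwise'' dichotomy, I would argue as follows. Because $\rr(E')$ is a deformation of $\rr(I)$ (quotient by a regular sequence), $\rr(E')$ is Cohen--Macaulay if and only if $\rr(I)$ is, and likewise $\rr(E')$ is almost Cohen--Macaulay (depth at least one less than dimension) if and only if $\rr(I)$ is --- deformations preserve both properties in this bidirectional sense when one quotients by a regular sequence of the expected length. Theorem~\ref{Theorem1} tells us $\rr(I)$ is Cohen--Macaulay precisely when $m = 1$ and almost Cohen--Macaulay when $m > 1$, so the same dichotomy holds for $\rr(E')$, hence for $\rr(E)$ by faithfully flat descent. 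The other invariants (such as the $a$-invariant, reduction number, or regularity alluded to in the introduction) would be read off from the explicit resolution of $\rr(I)$ produced in the proof of Theorem~\ref{Theorem1}, transported across the deformation.

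The main obstacle I anticipate is step (i)--(iii) above: verifying that a generic Bourbaki ideal of a linearly presented module over a \emph{hypersurface} ring is again perfect of grade $2$, linearly presented, with $I_1(\varphi_I) = \overline{(x_1,\dots,x_{d+1})}$ and $G_d$. Over a polynomial ring this is where \cite{Costantini} does substantial work, and the hypersurface relation $f$ could a priori interfere with the grade count or with the structure of $I_1(\varphi)$ --- for instance, one must rule out that the generic combination lands inside $(f)$ in a way that drops the height or destroys the $G_d$ condition. I expect this to go through because $d \geq 2$ and the genericity of the $z_{ij}$ give enough room, and because the hypothesis $I_1(\varphi) = \overline{(x_1,\dots,x_{d+1})}$ is stated for $E$ directly and is preserved under the Bourbaki construction (the presentation matrix of $I$ is obtained from that of $E$ by deleting generic-combination rows, which does not shrink $I_1$); but making the grade-$2$ perfection rigorous over $R = S/(f)$, rather than over $S$, is the crux.
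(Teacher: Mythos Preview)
Your overall architecture matches the paper's: reduce to the ideal case via a generic Bourbaki ideal $I$, apply Theorem~\ref{Theorem1} to $I$, establish the deformation $\rr(E'')/(F'') \cong \rr(I)$, and transfer depth properties back. The properties (i)--(iii) you flag as the ``main obstacle'' are in fact the routine part --- they follow from Remark~2.3 and the Fitting-ideal formulation of $G_d$, essentially unchanged from the polynomial-ring setting. The paper also uses, as you anticipate, that $E''$ is of linear type on the punctured spectrum together with $\dep\rr(I)\geq d\geq 2$ to invoke \cite[3.1]{Costantini} for the deformation.

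The genuine gap is in your step~5, where you assert that ``the formula lifts verbatim to $\rr(E')$'' because the iterations ``depend only on the presentation matrix $\varphi$ of $E$ (equivalently of $I$).'' These presentation matrices are \emph{not} equivalent: $\psi$ is $n\times(n-e)$ while $\psi_I$ is $(d+1)\times d$, and the iterations $(\BB_m,\BL_m)$ built from $\psi$ and $(\BB_{I,m},\BL_{I,m})$ built from $\psi_I$ live in polynomial rings with different numbers of $T$-variables. What one can show (Lemma~6.2 in the paper) is that they agree \emph{modulo} $(Y_1,\ldots,Y_{e-1})$, where $Y_j=\sum_i Z_{ij}T_i$. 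This gives only
\[
\J R''[T_1,\ldots,T_{d+e}] + (Y_1,\ldots,Y_{e-1}) \;=\; \overline{\BL_m+(\det\BB_m)} + (Y_1,\ldots,Y_{e-1}),
\]
not $\J=\overline{\BL_m+(\det\BB_m)}$ outright. To close the argument one needs two more ingredients you omit: first, the a~priori containment $\overline{\BL_m+(\det\BB_m)}\subseteq\J$ (which holds because $E$ is of linear type on the punctured spectrum, so $\J$ is the saturation of $\overline{\LL}$); second, intersecting the displayed equality with $\J$ and using that $Y_1,\ldots,Y_{e-1}$ is regular modulo $\J$ to get $\J=\overline{\BL_m+(\det\BB_m)}+(Y_1,\ldots,Y_{e-1})\J$, whence the conclusion by Nakayama. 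Without this Nakayama step the deformation only tells you the two ideals agree after killing the $Y_j$, which is strictly weaker.
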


As before, this result relies on the construction of a modified Jacobian dual and the recursive algorithm of modified Jacobian dual iterations to produce a minimal generating set of the defining ideal. This result is nearly identical to the statement of \Cref{Theorem1} which is to be expected as modules of projective dimension one are the higher rank analog of perfect ideals of grade 2. Indeed, if $e=1$ the module $E$ is isomorphic to such an ideal and we recover the previous result.

We now describe how this paper is organized.

In \Cref{prelims} we provide the necessary background material on Rees algebras of ideals and modules. Here we introduce much of the terminology and usual conventions. Additionally, we review the properties and construction of generic Bourbaki ideals which will be essential for \Cref{modulesec}.

In \Cref{hypring} we begin the study of the Rees algebra of a linearly presented perfect ideal $I$ of grade 2 of a hypersurface ring and the defining ideal $\J$. We introduce a perfect ideal $J$ of grade 2 of a polynomial ring and relate the Rees algebras $\rr(I)$ and $\rr(J)$. Additionally, we introduce the modified Jacobian dual matrix.

In \Cref{iterationssec} we describe two recursive algorithms which produce equations of the defining ideal. The first algorithm is a slight adaptation of Boswell and Mukundan's technique and is referred to as the method of \textit{matrix iterations}. A second algorithm is then introduced as an improvement and is referred to as the method of \textit{modified Jacobian dual iterations}. Lastly, we give a condition for when the defining ideal agrees with the ideals produced by these methods.

In \Cref{defidealsec} we show that the defining ideal coincides with the ideal of modified Jacobian dual iterations when $I$ is an ideal with second analytic deviation one. Properties such as Cohen-Macaulayness and regularity of $\rr(I)$ are then studied.

Lastly, in \Cref{modulesec} we study the defining ideal $\J$ of $\rr(E)$ for a module $E$ of projective dimension one over a hypersurface ring. We produce a generic Bourbaki ideal $I$ and relate the defining ideal of $\rr(E)$ to that of $\rr(I)$ which is known from the previous section.

\section{Conventions, Notation, and Preliminaries}\label[section]{prelims}

We now provide the preliminaries and properties of Rees algebras of ideals and modules needed for this paper. Additionally, we describe the construction of the generic Bourbaki ideal.

\subsection{Rees Algebras of Ideals and Modules}

 As we will consider both Rees algebras of ideals and modules, we proceed as generally as possible in this section. For now assume $R$ is a Noetherian ring and let $E = Ra_1+\cdots +Ra_n$ be either a finitely generated $R$-module with positive rank or an ideal of positive grade. We make note of any unique conventions used specifically in either case. Recall there is a natural homogeneous epimorphism
$$R[T_1,\ldots,T_n] \longrightarrow \sym(E)$$
given by $T_i\mapsto a_i\in [\sym(E)]_1$ which induces the isomorphism
$$\sym(E) \cong R[T_1,\ldots,T_n]/\LL$$
where the kernel $\LL$ of the map above can be described from a presentation of $E$. Indeed, if $R^s\overset{\varphi}{\rightarrow}R^n \rightarrow E\rightarrow 0$ is any presentation of $E$, then $\LL$ is generated by the linear forms $\ell_1,\ldots,\ell_s$ where
$$[T_1\ldots T_n]\cdot \varphi = [\ell_1 \ldots \ell_s].$$
The map above induces another epimorphism
$$R[T_1,\ldots,T_n] \longrightarrow \rr(E)$$
by further factoring the $R$-torsion from $\sym(E)$. This induced map is the same as the one mentioned in the introduction and maps the generators of $E$ to their images in the first graded component of $\rr(E)$. This map induces an isomorphism
$$\rr(E) \cong R[T_1,\ldots,T_n]/\J$$
for some ideal $\J$ of $R[T_1,\ldots,T_n]$ which again is the \textit{defining ideal} of $\rr(E)$. Additionally, any member of a minimal generating set of $\J$ is called a \textit{defining equation} of $\rr(E)$. We see that $\LL\subseteq \J$, but note this containment is often strict. However, equality is possible in which case $E$ is said to be of \textit{linear type}. It is important to note that if $R$ is a standard graded ring, each of the maps and kernels above are bihomomegeous.

Recall the generators of $\LL$ can be obtained from a presentation matrix of $E$ and these are the linear equations of $\J$. Letting $R^s\overset{\varphi}{\rightarrow}R^n \rightarrow E\rightarrow 0$ be a presentation of $E$ and  $\ell_1,\ldots,\ell_s$ as before, there exists an $r\times s$ matrix $B(\varphi)$ with linear entries in $R[T_1,\ldots,T_n]$ such that
$$[T_1 \ldots T_n] \cdot \varphi=[\ell_1 \ldots \ell_s] = [a_1\ldots a_r]\cdot B(\varphi) $$
where $(a_1,\ldots,a_r)$ is an ideal containing the entries of $\varphi$. The matrix $B(\varphi)$ is called a \textit{Jacobian dual} of $\varphi$ with respect to $a_1,\ldots,a_r$. This is a source of higher degree generators of $\J$ as Cramer's rule guarantees $I_r(B(\varphi)) \subset\J$. There are often several choices for $B(\varphi)$, hence it is not unique in general. However, in the typical situation where $R=k[x_1,\ldots,x_d]$ and $I_1(\varphi) \subseteq (x_1,\ldots,x_d)$, the Jacobian dual $B(\varphi)$ with respect to $x_1,\ldots,x_d$ is unique if the entries of $\varphi$ are linear.

For $I$ an ideal, we say $I$ satisfies the condition $G_s$ if $\mu(I_\p) \leq \dim R_\p$ for all $\p \in V(I)$ with $\dim R_\p\leq s-1$. We note this condition can be rephrased in terms of Fitting ideals. Indeed if $\hgt I>0$, then $I$ satisfies $G_s$ if and only if $\hgt \fitt_j(I)\geq j+1$ for all $1\leq j\leq s-1$. More generally, if $E$ is a module of rank $e$, we say that $E$ satisfies $G_s$ if $\mu(E_\p) \leq \dim R_\p +e-1$ for all $\p \in \spec(R)$ with $1\leq \dim R_\p\leq s-1$ or equivalently, $\hgt \fitt_j(I)\geq j-e+2$ for all $e\leq j\leq s+e-2$. If $G_s$ is satisfied for all $s$, we say that the ideal or module satisfies $G_\infty$.

Now suppose $E$ is either an $R$-ideal or $R$-module once more. If $R$ is a local ring with maximal ideal $\m$ and residue field $k$, the \textit{special fiber ring} of $E$ is $\ff(E) = \rr(E)\otimes_R k \cong \rr(E)/\m \rr(E)$.  The Krull dimension $\ell(E) = \dim \ff(E)$ is the \textit{analytic spread} of $E$.

\subsection{Generic Bourbaki Ideals}

Generic Bourbaki ideals were introduced in \cite{ReesAlgebrasOfModules} as a method to study the Cohen-Macaulayness of Rees algebras of modules by reducing to the case of Rees algebras of ideals. We adopt the notation of \cite{ReesAlgebrasOfModules} and recall the constructions and definitions there. 


\begin{notation}\label[notation]{module notation}
Let $(R,\m)$ be a Noetherian local ring and $E$ a finite $R$-module with positive rank $e$. Let $U=Ra_1+\cdots+ Ra_n$ be a submodule of $E$ and consider a set of indeterminates 
$$Z=\big\{Z_{ij} \,\big| \, 1\leq i\leq n,\, 1\leq j\leq e-1\big\}.$$
Let $R' = R[Z]$ and $E' = E\otimes_R R'$. Now set $y_j = \sum_{i=1}^n Z_{ij}a_i$ for all $1\leq j\leq e-1$ and let $F' = \sum_{j=1}^{e-1} R'y_j$. Finally, let $R'' = R(Z) = R'_{\m R'}$ and with this, set $E'' = E\otimes_R R''$ and $F''=F'\otimes_{R'} R''$.
\end{notation}

\begin{proposition}[{\cite[3.2]{ReesAlgebrasOfModules}}]\label[proposition]{gbiassumptions}
Assume that $E$ is finite, torsion-free, and satisfies the $\tilde{G}_2$ condition, i.e. $E$ is free locally in depth one. If either $\grd (E/U) \geq 2$ or $U=E$, then $F'$ is a free $R'$-module of rank $e-1$ and $E'/F' \cong J$ where $J$ is an $R'$-ideal of positive grade. Additionally $E''/F'' \cong I$ where $I$ is an $R''$-ideal.
\end{proposition}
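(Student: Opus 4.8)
The plan is to exhibit $0\to F'\to E'\to E'/F'\to 0$ as a \emph{generic Bourbaki sequence}. First I would reduce everything to two assertions: that $F'$ is a free $R'$-module of rank $e-1$, and that $E'/F'$ is torsion-free. Granting these, comparing ranks in the exact sequence shows $E'/F'$ is torsion-free of rank one, hence (being torsion-free) it embeds in the total quotient ring $Q(R')$ and so is isomorphic to an $R'$-ideal $J$; moreover $\grd J>0$, since an ideal isomorphic to a torsion-free module of positive rank contains a nonzerodivisor. The last assertion is then formal: $R''=R(Z)=R'_{\m R'}$ is a localization of $R'$, so $E''/F''=(E'/F')\otimes_{R'}R''\cong JR''$ is an $R''$-ideal, and it has positive grade because a nonzerodivisor of $R'$ lying in $J$ remains a nonzerodivisor after inverting $R'\setminus\m R'$, no associated prime of $R'$ being contained in it. If $e=1$ there are no indeterminates $Z_{ij}$ and $F'=0$, so the statement reduces to the classical fact that a torsion-free module of rank one is isomorphic to an ideal of positive grade; thus assume $e\ge 2$. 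Both remaining assertions are local, and the hypothesis that $E$ is free locally in depth $\le1$ (i.e.\ $\tilde{G}_2$) is exactly what lets one reduce the local analysis to generic linear algebra over a free module; I expect the delicate point to be torsion-freeness of $E'/F'$ at primes of $R'=R[Z]$ that involve the indeterminates, where the generic configuration of $y_1,\dots,y_{e-1}$ degenerates.

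For the freeness of $F'$ I would use two observations. Since $\grd(E/U)\ge 2>0$ (or $U=E$), one has $U_\p=E_\p$ whenever $\dep R_\p\le1$, so at such a prime $E_\p$ is free of rank $e$ and is generated by $a_1,\dots,a_n$; and $\operatorname{Ass}R'=\{\p R'\mid\p\in\operatorname{Ass}R\}$, with $R'_{\p R'}=R_\p(Z)$ a ring in which each $Z_{ij}$---hence every maximal minor of a submatrix of $(Z_{ij})$ with $R_\p$-linearly independent entries---is a unit. Now let $\pi\colon R^n\to E$, $e_i\mapsto a_i$, which has image $U$; after base change the composite $(R')^{e-1}\xrightarrow{\,Z\,}(R')^n\xrightarrow{\,\pi'\,}E'$ has image $F'$. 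I claim it is split injective after localizing at each $\p'=\p R'\in\operatorname{Ass}R'$: there $\dep R_\p=0\le1$, so $E_\p$ is free of rank $e$ and the $a_i$ generate it, hence $\pi'_{\p'}$ splits, and after absorbing the associated change of free bases into $Z$ (which leaves it equally generic) the composite is given by the top $e\times(e-1)$ block of the generic matrix, whose maximal minors include a unit of $R_\p(Z)$. Consequently the kernel $N$ of $(R')^{e-1}\to E'$ vanishes at every associated prime of $R'$; since $N$ is a submodule of a free $R'$-module, $\operatorname{Ass}N\subseteq\operatorname{Ass}R'$, so $\operatorname{Ass}N=\emptyset$ and $N=0$. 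Hence $(R')^{e-1}\xrightarrow{\ \sim\ }F'$, so $F'$ is free of rank $e-1$.

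For torsion-freeness of $E'/F'$ it suffices to show $\operatorname{Ass}_{R'}(E'/F')\subseteq\operatorname{Ass}R'$, i.e.\ $\dep(E'/F')_{\p'}\ge1$ whenever $\dep R'_{\p'}\ge1$; put $\p=\p'\cap R$, so $\dep R_\p\le\dep R'_{\p'}$ by faithful flatness. If $\dep R'_{\p'}\ge2$, then $\dep F'_{\p'}=\dep R'_{\p'}\ge2$ (using freeness of $F'$), while $E'=E\otimes_R R'$ is torsion-free ($\operatorname{Ass}E'=\{\p R'\mid\p\in\operatorname{Ass}E\}\subseteq\operatorname{Ass}R'$) so $\dep E'_{\p'}\ge1$, and the depth estimate for $0\to F'_{\p'}\to E'_{\p'}\to(E'/F')_{\p'}\to0$ gives $\dep(E'/F')_{\p'}\ge\min\{\dep E'_{\p'},\dep F'_{\p'}-1\}\ge1$. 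The remaining, and I expect most delicate, case is $\dep R'_{\p'}=1$: then $\dep R_\p\le1$, so $E_\p$ is free of rank $e$ generated by the $a_i$, and as above $F'_{\p'}\hookrightarrow E'_{\p'}$ is represented over $R'_{\p'}$ by a generic $e\times(e-1)$ matrix $W$. The ideal of maximal minors of a generic $e\times(e-1)$ matrix has grade $\ge2$ over any Noetherian base (by the classical statement over a field and the depth formula for polynomial extensions), so every prime containing $I_{e-1}(W)$ has depth $\ge2$; since $\dep R'_{\p'}=1$, $\p'$ does not contain $I_{e-1}(W)$, whence $W$ is split injective over $R'_{\p'}$ and $(E'/F')_{\p'}$ is free of depth $1$. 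So the only primes where $E'/F'$ could acquire torsion---those involving the $Z_{ij}$, where the generic picture breaks down---are confined to codimension $\ge2$ precisely by the grade of this degeneracy ideal, and $E'/F'$ is torsion-free. This completes the Bourbaki sequence and the proof.
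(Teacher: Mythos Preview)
The paper does not prove this proposition: it is quoted from \cite[3.2]{ReesAlgebrasOfModules} as background for the construction of generic Bourbaki ideals and is used as a black box, so there is no in-paper argument to compare against.

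That said, your outline is a correct reconstruction of the standard proof. Reducing to the two assertions that $F'$ is free of rank $e-1$ and $E'/F'$ is torsion-free, and then verifying both by depth bookkeeping together with the generic-matrix argument at primes where $E$ is locally free, is exactly the approach of Simis--Ulrich--Vasconcelos. The one step that deserves an extra sentence is the phrase ``after absorbing the associated change of free bases into $Z$'': once some $e$ of the $a_i$ form a basis of $E_\p$, the matrix of $F'_{\p'}\to E'_{\p'}$ has entries $Z_{kj}+\sum_l c_{kl}Z_{e+l,j}$ with $c_{kl}\in R_\p$, and it is the $R_\p$-algebra automorphism of $R_\p[Z]$ sending $Z_{kj}\mapsto Z_{kj}-\sum_l c_{kl}Z_{e+l,j}$ (for $k\le e$) that carries this to a literal generic $e\times(e-1)$ matrix; since this automorphism fixes $\p R_\p[Z]$, it is compatible with the localization at $\p'$. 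With that made explicit, your grade-$2$ claim for $I_{e-1}$ of the generic matrix over an arbitrary Noetherian base (via the depth formula for $R\to R[W]$ and the classical height over a field) does the job, and the rest of the argument goes through as you wrote it.
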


\begin{definition}[{\cite[3.3]{ReesAlgebrasOfModules}}]
We refer to the $R''$-ideal $I \cong E''/F''$ as a \textit{generic Bourbaki ideal} of $E$ with respect to $U$. In the case where $U=E$, $I$ is simply called a generic Bourbaki ideal of $E$.
\end{definition}

We remark that a generic Bourbaki ideal of $E$ with respect to submodule $U$ is essentially unique. Suppose $a_1,\ldots,a_n$ and $b_1,\ldots,b_m$ are generating sets of $U$ and let $Z$ and $Y$ denote the respective sets of indeterminates as in \Cref{module notation}. Let $I\subset R(Z)$ and $K\subset R(Y)$ denote the ideals constructed as in \cref{gbiassumptions}. There is an automorphism $\lambda$ of the $R$-algebra $S=R(Y,Z)$ and unit $u\in \quot(S)$ such that $\lambda(IS) = KSu$. Moreover, $u=1$ if $I$ and $K$ have grade at least 2 \cite[3.4]{ReesAlgebrasOfModules}.


Moreover, these ideals are not as enigmatic as they might seem. Given a presentation matrix of $E$, one can always obtain a presentation matrix of a generic Bourbaki ideal $I$. 

\begin{remark}[{\cite[p. 617]{ReesAlgebrasOfModules}}]\label[remark]{gbipresentation}
Let $(R,\m,k)$ be a Noetherian local ring and $E$ an $R$-module as in \Cref{gbiassumptions} and let $R^s\overset{\varphi}{\rightarrow}R^n \rightarrow E\rightarrow 0$ be a minimal presentation of $E$.
\begin{enumerate}[(a)]
    \item With $Z$ and $y_j$ as in \Cref{module notation}, after possibly multiplying $\varphi$ by an invertible matrix with entries in $k(Z)$, we may assume $\varphi$ presents $E''$ with respect to the generating set $y_1,\ldots,y_{e-1},a_e,\ldots,a_n$. Then
    $$\varphi= \begin{bmatrix} \hspace{2mm}A\hspace{2mm} \\
\varphi_I\end{bmatrix}$$
where $A$ and $\varphi_I$ are submatrices of sizes $e-1\times s$ and $n-e+1 \times s$ respectively. By construction, $\varphi_I$ is a presentation of $I$ and is minimal as $\mu(I) = n-e+1$.

\item If $R$ is standard graded and $\varphi$ consists of homogeneous entries of constant degree along each column, then $\varphi_I$ does as well. In particular, if $E$ is linearly presented then $I$ is as well.
\end{enumerate}
\end{remark}

To end this section, we provide the main tool to compare the Rees algebra of a module to the Rees algebra of its generic Bourbaki ideal.

\begin{theorem}[{\cite[3.5]{ReesAlgebrasOfModules}}]\label{passCMness}
Retain the assumptions of \cref{gbiassumptions}. Let $U$ be a reduction of $E$ and $I$ a generic Bourbaki ideal of $E$ with respect to $U$. Let $J \cong U''/F''$ as a subideal of $I$.
\begin{enumerate}[(a)]
    \item \begin{enumerate}[(i)]
        \item The Rees algebra $\rr(E)$ is Cohen-Macaulay if and only if $\rr(I)$ is Cohen-Macaulay.
        
        \item The module $E$ is of linear type and $\grd \rr(E)_+\geq e$ if and only if $I$ is of linear type.
    \end{enumerate}
    \item If either $(i)$ or $(ii)$ of $(a)$ hold, then $\rr(E'')/(F'') \cong \rr(I)$ and $y_1,\ldots,y_{e-1}$ is a regular sequence on $\rr(E'')$.
    
    \item If $\rr(E'')/(F'')\cong \rr(I)$, then $J$ is a reduction of $I$ with $r_J(I) = r_U(E)$. In particular, if $k$ is infinite and $U=E$, then $r(E) =r(I)$.
\end{enumerate}
\end{theorem}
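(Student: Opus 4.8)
The plan is to push information back and forth across the Bourbaki exact sequence
$$0 \longrightarrow F'' \longrightarrow E'' \longrightarrow I \longrightarrow 0$$
furnished by \Cref{gbiassumptions}, in which $F''$ is $R''$-free of rank $e-1$ with basis $y_1,\ldots,y_{e-1}$ as in \Cref{module notation}. First I would pass to the ring $R''$: since $R\to R''$ is faithfully flat (a localization of a polynomial extension) with regular fiber and $\rr(E'')\cong\rr(E)\otimes_R R''$, Cohen-Macaulayness, being of linear type, and reduction numbers are identical for $\rr(E)$ and $\rr(E'')$, so it suffices to compare $\rr(E'')$ with $\rr(I)$. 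Write $\y=y_1,\ldots,y_{e-1}$ for the corresponding sequence of degree-one elements of $\rr(E'')$, and note $F''\subseteq U''$ because each $y_j$ is an $R''$-linear combination of generators of $U$.

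The crux is to show that $\y$ is a regular sequence on $\rr(E'')$ under either hypothesis of (a) and that $\rr(E'')/(\y)\rr(E'')\cong\rr(I)$. Applying the right-exact functor $\sym_{R''}(-)$ to the sequence above gives $\sym(E'')/(\y)\sym(E'')\cong\sym(I)$; factoring out $R''$-torsion on each side produces a natural surjection $\rr(E'')/(\y)\rr(E'')\twoheadrightarrow\rr(I)$, and the task is to see it is an isomorphism, i.e. that the $R''$-torsion submodules of $\sym(E'')$ and of $\sym(I)$ correspond modulo $(\y)$. This is precisely where the genericity of the indeterminates $Z_{ij}$ is used: because $E$ is torsion-free and free locally in depth one (the $\tilde{G}_2$ hypothesis), the general elements $y_j$ avoid the associated primes that would obstruct the comparison, so passing to the quotient by $(\y)$ neither creates nor destroys torsion; when $U\neq E$ the hypothesis $\grd(E/U)\geq 2$ plays the same role (and is what makes $I$ an ideal of positive grade in \Cref{gbiassumptions}). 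For the regular-sequence claim I would then combine this isomorphism with the dimension count $\dim\rr(E'')=\dim R''+e$ and $\dim\rr(I)=\dim R''+1$, so that $\y$ has the right length $e-1$; that it is genuinely a regular sequence is what the hypotheses of (a) deliver once transported back up the isomorphism --- $\rr(I)$ Cohen-Macaulay forces enough depth on $\rr(E'')$ at its graded maximal ideal, while if $E$ is of linear type the condition $\grd\rr(E)_+\geq e$ is exactly the grade of $\rr(E)_+$ needed for $\y$ to be regular on $\sym(E'')=\rr(E'')$.

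Granting this, the rest is formal. For (a)(i): a quotient of a Cohen-Macaulay ring by a regular sequence is Cohen-Macaulay, giving one direction, and since $\y$ lies in the graded maximal ideal of $\rr(E'')$ and is regular, Cohen-Macaulayness of $\rr(E'')/(\y)\rr(E'')\cong\rr(I)$ lifts to $\rr(E'')$, giving the other. For (a)(ii): $E''$ is of linear type iff $\sym(E'')=\rr(E'')$, and modding out the regular sequence $\y$ this is equivalent to $\sym(I)=\rr(I)$, i.e. $I$ of linear type, the grade hypothesis on $\rr(E)_+$ being what supplies the regular sequence in the absence of the Cohen-Macaulay assumption; either way one has recorded $\rr(E'')/(F'')=\rr(E'')/(\y)\rr(E'')\cong\rr(I)$ with $\y$ a regular sequence, which is (b). For (c), assuming $\rr(E'')/(F'')\cong\rr(I)$: the Bourbaki sequence restricts to $0\to F''\to U''\to J\to 0$, so reducing the identity $(E'')^{r+1}=U''(E'')^{r}$ with $r=r_U(E)$ modulo $(F'')$ gives $I^{r+1}=JI^{r}$, whence $J$ is a reduction of $I$ with $r_J(I)\leq r_U(E)$; the reverse inequality follows by lifting a reduction identity for $J\subseteq I$ back through the isomorphism, using $F''\subseteq U''$, and when $k$ is infinite and $U=E$ minimal reductions are generated by general elements, giving $r(E)=r(I)$.

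I expect the main obstacle to be the content of the second paragraph: proving simultaneously that the generic elements $y_1,\ldots,y_{e-1}$ form a regular sequence on $\rr(E'')$ and that the resulting quotient is $\rr(I)$ itself rather than a proper quotient of it. This rests on a careful use of genericity together with the $\tilde{G}_2$ hypothesis (and $\grd(E/U)\geq 2$ when $U\neq E$) to control the torsion submodules, and everything else in the statement is bookkeeping around this fact.
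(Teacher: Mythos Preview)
This theorem is not proved in the present paper: it is quoted verbatim from \cite[3.5]{ReesAlgebrasOfModules} as background material in the preliminaries, and no argument for it appears here. There is therefore no ``paper's own proof'' to compare your proposal against.

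That said, your outline is the right shape for how the result is actually established in \cite{ReesAlgebrasOfModules}: reduce to $R''$ by faithful flatness, apply $\sym(-)$ to the Bourbaki sequence to obtain $\sym(E'')/(\y)\cong\sym(I)$, and then argue that torsion is compatible so that $\rr(E'')/(\y)\cong\rr(I)$ with $\y$ regular. You correctly flag the second paragraph as the real content. One point to sharpen: the logical flow in (a) and (b) is more delicate than your sketch suggests. The regularity of $\y$ and the isomorphism $\rr(E'')/(\y)\cong\rr(I)$ are not proved independently and then combined; rather, in \cite{ReesAlgebrasOfModules} one first shows the surjection $\rr(E'')/(\y)\twoheadrightarrow\rr(I)$ is an isomorphism in high enough degrees (or locally off a controlled locus) using genericity and $\tilde G_2$, and then uses the hypothesis of (a)(i) or (a)(ii) to propagate this to all degrees and simultaneously force $\y$ to be regular. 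Your phrasing ``$\rr(I)$ Cohen--Macaulay forces enough depth on $\rr(E'')$'' inverts the direction of this step; one does not yet know $\rr(E'')$ well enough to read off depth, so the argument must go through the quotient. Similarly, for (c) the equality $r_J(I)=r_U(E)$ (not merely $\leq$) requires more than lifting one reduction identity; in \cite{ReesAlgebrasOfModules} it comes from the isomorphism $\rr(E'')/(F'')\cong\rr(I)$ applied degree by degree together with the graded Nakayama lemma.
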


Whenever the condition $\rr(E'')/(F'')\cong \rr(I)$ as in part $(b)$ holds, we say that $\rr(E'')$ is a \textit{deformation} of $\rr(I)$.

\section{Ideals of Hypersurface Rings}\label[section]{hypring}

In this section we consider the defining ideal $\J$ of $\rr(I)$ for $I$ a perfect ideal with grade 2 of $R$, a hypersurface ring. We introduce another grade 2 perfect ideal $J$ closely related to $I$ and then study the relation between $\rr(I)$ and $\rr(J)$.

\begin{setting}\label[setting]{setting1}
Let $S=k[x_1,\ldots,x_{d+1}]$ be a polynomial ring over a field $k$, $f\in S$ a homogeneous polynomial of degree $m\geq 1$, and $R= S/(f)$. Let $I = (\alpha_1,\ldots,\alpha_n)$ be a perfect $R$-ideal of grade 2 with a presentation matrix $\varphi$ consisting of homogeneous linear entries. Further assume that $I$ satisfies the $G_d$ condition and $n>d$.
\end{setting}

\begin{remark}\label{notli}
 We assume that $n > d$ in order to avoid the situation where $I$ is of linear type. Indeed if $n\leq d$, then $I$ satisfies the $G_\infty$ condition and is hence of linear type by \cite[2.6]{HSV1}. Additionally, for our purposes it is acceptable to assume that $m\geq 2$. If $m=1$, we reduce to the situation of \cite{MU} where the defining ideal is known. Regardless, we make no such assumption and retain this possibility. 
\end{remark}

\begin{notation}\label[notation]{notation1}
Let $\overline{\,\cdot\,}$ denote images  modulo the ideal $(f)$ and let $\psi$ be an $n\times n-1$ matrix consisting of linear entries in $S$ such that $\varphi = \overline{\psi}$. Let $\LL=(\ell_1,\ldots,\ell_{n-1},f)$ be the $S[T_1,\ldots,T_n]$-ideal where $[\ell_1 \ldots \ell_{n-1}]= [T_1 \ldots T_n]  \cdot \psi$.  
\end{notation}

We remark that $\psi$ is unique if $m\geq 2$ as the entries of this matrix are assumed to be linear. If $m=1$, then $\psi$ is not unique, but any such matrix can be chosen. We claim there exists an $S$-ideal which is perfect of grade 2 with $\psi$ as its presentation matrix. To see this, it suffices to show $\hgt I_{n-1}(\psi) \geq 2$ by the Hilbert-Burch theorem. Note that the image of this ideal in $R$ is exactly the corresponding ideal of minors of $\varphi$. As the height can only decrease passing to $R$ and $\hgt I_{n-1}(\varphi)=2$, the claim follows. Let $J$ denote this $S$-ideal and notice that since $I$ satisfies the $G_d$ condition, $J$ does as well. This follows by phrasing the condition $G_d$ in terms of Fitting ideals, relating the ideals of minors of $\psi$ with those of $\varphi$, and comparing their heights in a similar manner as before.

Notice that the $S[T_1,\ldots,T_n]$-ideal $(\ell_1,\ldots,\ell_{n-1})$ is  exactly the ideal defining $\sym(J)$. Moreover, $\overline{\LL}$ is the defining ideal of $\sym(I)$ as $\varphi = \overline{\psi}$ and with this, we see $S[T_1,\ldots,T_n]/ \LL \cong R[T_1,\ldots,T_n]/\overline{\LL} \cong \sym(I)$. Hence $\LL$ is a defining ideal of $\sym(I)$, in a sense, as a quotient of $S[T_1,\ldots,T_n]$. Thus there is a clear relation between the defining ideals of the two symmetric algebras as quotients of $S[T_1,\ldots,T_n]$ and it is natural to ask if there is a similar relation between the two Rees algebras $\rr(J)$ and $\rr(I)$. Before we explore this possibility, we give a description of the defining ideal $\J$ of $\rr(I)$ and produce an analogous ideal of $S[T_1,\ldots,T_n]$ which defines $\rr(I)$ as a quotient of this ring.

\begin{proposition}\label{Jasat}
With the assumptions of \Cref{setting1} and $\LL$ as in \Cref{notation1}, the defining ideal of $\rr(I)$ satisfies $\J = \overline{\LL:(x_1,\ldots,x_{d+1})^\infty}$.
\end{proposition}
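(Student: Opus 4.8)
The plan is to identify $\rr(I)$ with a quotient of $S[T_1,\ldots,T_n]$ and then recognize its defining ideal as a saturation. Since $\rr(I) \cong R[T_1,\ldots,T_n]/\J$ and $R = S/(f)$, we have $\rr(I) \cong S[T_1,\ldots,T_n]/\mathcal{K}$ where $\mathcal{K}$ is the preimage of $\J$ in $S[T_1,\ldots,T_n]$; note $f \in \mathcal{K}$ and $\mathcal{K}/(f) = \J$ (under the identification $R[T_1,\ldots,T_n] = S[T_1,\ldots,T_n]/(f)$). Thus the claim $\J = \overline{\LL : (x_1,\ldots,x_{d+1})^\infty}$ is equivalent to showing $\mathcal{K} = \LL : (x_1,\ldots,x_{d+1})^\infty$ inside $S[T_1,\ldots,T_n]$, since $f \in \LL$ already and saturation commutes with passing modulo $(f)$ in the required direction. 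So the real content is: the kernel of $S[T_1,\ldots,T_n] \to \rr(I)$ is the saturation of $\LL$ with respect to $\m := (x_1,\ldots,x_{d+1})$.

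First I would recall that $\rr(I)$ is obtained from $\sym(I) \cong R[T_1,\ldots,T_n]/\overline{\LL} \cong S[T_1,\ldots,T_n]/\LL$ by killing $R$-torsion, equivalently by killing the torsion with respect to $\rr(I)_{+}$, or — after localizing where $I$ is nice — with respect to a suitable ideal. The standard fact is that $I$ is of linear type on the punctured spectrum: since $I$ satisfies $G_d$ and, away from $V(\m)$, the ideal $I$ is locally generated by at most $\dim R_\p \le d$ elements satisfying $G_\infty$ locally, $I_\p$ is of linear type for every $\p \ne \m$ by \cite[2.6]{HSV1}. Hence $\sym(I)_\p \cong \rr(I)_\p$ for all primes $\p$ not containing $\m$, which says precisely that the kernel $\mathcal{K}$ and the ideal $\LL$ agree after inverting any element of $\m$, i.e. $\mathcal{K} \subseteq \LL : \m^\infty$.

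For the reverse inclusion and to pin down $\mathcal{K}$ exactly, I would use that $\rr(I)$ is a domain (as $R$ is a domain and $I \ne 0$), so $\mathcal{K}$ is a prime ideal, hence saturated with respect to any ideal not contained in it; in particular $\mathcal{K} = \mathcal{K} : \m^\infty$ because $\m S[T_1,\ldots,T_n] \not\subseteq \mathcal{K}$ (as $\rr(I)$ is not killed by $\m$). Combining with $\LL \subseteq \mathcal{K}$ gives $\LL : \m^\infty \subseteq \mathcal{K} : \m^\infty = \mathcal{K}$, and together with the previous paragraph, $\mathcal{K} = \LL : \m^\infty$. Passing modulo $(f)$ — using $f \in \LL$ so that $\overline{\LL : \m^\infty} = \overline{\LL} : \overline{\m}^\infty$ and $\overline{\mathcal{K}} = \J$ — yields $\J = \overline{\LL : (x_1,\ldots,x_{d+1})^\infty}$.

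The main obstacle is the clean passage between saturations upstairs in $S[T_1,\ldots,T_n]$ and downstairs in $R[T_1,\ldots,T_n]$: one must check that $\overline{\LL : \m^\infty} = \overline{\LL} : \overline{\m}^\infty$, which holds precisely because $f \in \LL$ (so colon ideals of $\LL$ all contain $f$ and reduce correctly), and that $\rr(I)$ being a domain is enough to guarantee $\mathcal{K}$ is $\m$-saturated — equivalently that $\rr(I)$ has no nonzero $\m$-torsion, which again follows since a domain has no torsion with respect to a nonzero ideal. A secondary point to verify carefully is that $\m R[T_1,\ldots,T_n]$ is genuinely not contained in $\J$; this is clear since $[\rr(I)]_0 = R$ and $\m R \ne 0$. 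These are all routine once phrased correctly, so the proposition reduces to the linear-type-on-the-punctured-spectrum statement, which is the only substantive input.
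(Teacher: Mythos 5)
Your forward inclusion is exactly the paper's argument: $G_d$ forces $I$ to be of linear type locally after inverting any element of $\overline{(x_1,\ldots,x_{d+1})}$, so $\J$ and $\overline{\LL}$ agree on the punctured spectrum and $\J \subseteq \overline{\LL}:\overline{(x_1,\ldots,x_{d+1})}^\infty$. The gap is in your reverse inclusion: you assert that $\rr(I)$ is a domain ``as $R$ is a domain,'' but in \Cref{setting1} the ring is $R=S/(f)$ with $f$ only assumed homogeneous of degree $m\geq 1$; $f$ need not be irreducible, so $R$ need not be a domain and the kernel of $S[T_1,\ldots,T_n]\to\rr(I)$ (your $\mathcal{K}$) need not be prime. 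The paper is explicit on this point: in \Cref{hypring} it remarks that, unlike the saturation in Boswell--Mukundan's setting, the ideal here is not prime unless $f$ is irreducible, and the example in \Cref{defidealsec} with $f=x_1^3$ records that $\A$ and $\J$ fail to be prime because $\rr(I)$ is not a domain. So the step ``$\mathcal{K}$ is prime, hence $(x_1,\ldots,x_{d+1})$-saturated'' does not go through in general.

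What is actually needed, and what the paper uses instead, is the weaker statement that the image of $\overline{(x_1,\ldots,x_{d+1})}$ in $\rr(I)$ has positive grade, i.e.\ contains a nonzerodivisor on $\rr(I)$. Granting this, if $\overline{(x_1,\ldots,x_{d+1})}^t\, u \subseteq \overline{\LL} \subseteq \J$ then the image of $u$ in $\rr(I)$ is annihilated by an ideal containing a nonzerodivisor, hence is zero, so $u\in\J$. This grade claim is not the same as your observation that $(x_1,\ldots,x_{d+1})R\neq 0$: one must rule out that $\overline{(x_1,\ldots,x_{d+1})}$ lands inside an associated prime of $\rr(I)$. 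The paper deduces it from the correspondence between associated primes of $R$ and of $\rr(I)$ (citing Huneke--Swanson): the associated primes of $R$ are the minimal primes of $(f)$, of height one in $S$, so $(x_1,\ldots,x_{d+1})$ avoids all of them, and therefore its image avoids every associated prime of $\rr(I)$. Replacing your domain hypothesis by this grade argument repairs the proof; the rest of your proposal (the reduction to $S[T_1,\ldots,T_n]$, the compatibility of saturation with passing modulo $(f)\subseteq\LL$, and the linear-type-on-the-punctured-spectrum input) is sound and matches the paper.
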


\begin{proof}
Let $s\in \overline{(x_1,\ldots,x_{d+1})}$ and note that, as $I$ satisfies the condition $G_d$, locally $I_s$ satisfies $G_\infty$ as an $R_s$-ideal and is hence of linear type by \cite[2.6]{HSV1}. Thus $\J_s = \overline{\LL}_s$ and so there is some power $t$ such that $s^t \J \subset \overline{\LL}$, hence $\J \subseteq \overline{\LL}:\overline{(x_1,\ldots,x_{d+1})}^\infty$. However, we have $\overline{\LL}:\overline{(x_1,\ldots,x_{d+1})}^\infty \subseteq \J$ as $\overline{\LL} \subseteq \J$ and modulo $\J$, the image of $\overline{(x_1,\ldots,x_{d+1})}$ in $\rr(I)$ is an ideal of positive grade. 
\end{proof}

The claim regarding the grade of the image of $\overline{(x_1,\ldots,x_{d+1})}$ in $\rr(I)$ follows from the correspondence between associated primes of $R$ and $\rr(I)$ \cite{HS}.

\begin{setting}\label[setting]{setting1.5}
Adopt the assumptions of \Cref{setting1} and let $J$ be the $S$-ideal mentioned above. Let $\A = \LL:(x_1,\ldots,x_{d+1})^\infty$ in $S[T_1,\ldots,T_n]$ and assume that $J$ satisfies the condition $G_{d+1}$.
\end{setting}

We remark that this additional condition on $J$ is not a strong assumption. As previously mentioned, $J$ automatically satisfies the condition $G_d$ as $I$ does. Thus the assumption that $J$ satisfies $G_{d+1}$ is equivalent to $\hgt I_{n-d}(\psi) \geq d+1$.

\begin{remark}\label{diffdefideals}
Notice that $\A$ is the kernel of the composition
$$S[T_1,\ldots,T_n] \longrightarrow R[T_1,\ldots,T_n] \longrightarrow \rr(I)$$
where the first map quotients by $(f)$ and the second is the natural map. Similarly, $\LL$ is the kernel of the composition
$$S[T_1,\ldots,T_n] \longrightarrow R[T_1,\ldots,T_n] \longrightarrow \sym(I)$$
where the first map is the same as before and the second is the natural map onto the symmetric algebra.
\end{remark}


The saturation $\A$ is reminiscent of a similar ideal encountered in \cite{BM}. Observe that the generators of $\LL$ are all linear with respect to $x_1,\ldots,x_{d+1}$ with the exception of $f$. In \cite{BM} this same phenomenon occurred, but was due to a column of nonlinear entries in a presentation matrix. With this, we  proceed along a similar path to study $\A$, but must frequently take alternative approaches for the proofs presented here. In \cite{BM} the saturation encountered was a prime ideal, an assumption crucial for Boswell and Mukundan's arguments and one we do not necessarily possess unless $f$ is irreducible.

We now introduce a matrix associated to the generators of $\LL$ and the sequence $x_1,\ldots,x_{d+1}$ which will be an adaptation of the traditional Jacobian dual.

\begin{definition}
With the assumptions of \Cref{setting1.5} and letting $\LL$ and $\psi$ be as in \Cref{notation1}, we define a \textit{modified Jacobian dual} of $\psi$ to be the $d+1 \times n$ matrix $B=[B(\psi)\,|\,\partial f]$ where $B(\psi)$ is the Jacobian dual of $\psi$ with respect to $x_1,\ldots,x_{d+1}$ and $\partial f$ is a column with $f= [x_1 \ldots x_{d+1}]\cdot \partial f$. Here $|$ denotes the usual matrix concatenation. 
\end{definition}

Notice that the generators of $\LL$ are exactly the entries of the matrix product $[x_1 \ldots x_{d+1}]\cdot B$. We remark that any generating of the ideal of entries of $\psi$ could be used to construct $B(\psi)$, but not $B$ in general if $f$ is not contained in this ideal. This is no matter in our situation as $I_1(\psi) =(x_1,\ldots,x_{d+1})$. This follows as $I_1(\psi) \supset I_{n-d}(\psi)$ and the latter ideal has height $d+1$ since $J$ satisfies $G_{d+1}$.

The uniqueness of $B(\psi)$ is guaranteed as $\psi$ consists of linear entries of $S$, but the modified Jacobian dual $B$ is not necessarily unique as there are often multiple choices for $\partial f$. In particular, one may produce such a column $\partial f$ using partial derivatives as the notation suggests, when $k$ is a field of characteristic zero. In this case the degree $m$ is a unit and using the Euler formula, the entries of $\partial f$ can be taken as the partial derivatives of $\frac{1}{m}\cdot f$.

\begin{proposition}\label{hgtofL}
With the assumptions of \Cref{setting1.5}, $\hgt\LL = d+1$.
\end{proposition}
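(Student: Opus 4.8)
The plan is to reduce the computation of $\hgt\LL$ to a dimension count for the symmetric algebra of $I$. Since $S[T_1,\ldots,T_n]$ is a polynomial ring over a field, it is catenary and equidimensional, so $\hgt\LL = \dim S[T_1,\ldots,T_n] - \dim\big(S[T_1,\ldots,T_n]/\LL\big)$. As recorded above, $S[T_1,\ldots,T_n]/\LL\cong \sym(I)$, and $\dim S[T_1,\ldots,T_n] = d+1+n$, so the assertion $\hgt\LL = d+1$ is equivalent to $\dim\sym(I) = n$; I would prove this by two inequalities.

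For $\dim\sym(I)\ge n$, equivalently $\hgt\LL\le d+1$, I would argue directly: every generator of $\LL$ lies in the height $d+1$ prime ideal $(x_1,\ldots,x_{d+1})S[T_1,\ldots,T_n]$ --- the $\ell_i$ because the entries of $\psi$ are linear forms in the $x$'s, and $f$ because it is a form of positive degree in the $x$'s --- so $\LL\subseteq (x_1,\ldots,x_{d+1})S[T_1,\ldots,T_n]$ and hence $\hgt\LL\le d+1$. (Alternatively, since the presentation of $I$ given by $\varphi$ is minimal, $\sym(I)\otimes_R k\cong k[T_1,\ldots,T_n]$ is a residue ring of $\sym(I)$ of dimension $n$.)

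For $\dim\sym(I)\le n$ I would invoke the Huneke--Rossi formula $\dim\sym(I) = \sup\{\dim R/\p + \mu(I_\p) : \p\in\spec R\}$ and check that each summand is at most $n$. As $I$ is generated by $n$ elements, $\mu(I_\p)\le n$ for every $\p$. If $\p\not\supseteq I$ then $\mu(I_\p)=1$ and $\dim R/\p\le\dim R = d$, so the summand is at most $d+1\le n$ (recall $n>d$). If $\p\supseteq I$ with $\dim R_\p\le d-1$, then the $G_d$ condition gives $\mu(I_\p)\le\dim R_\p$; since $R = S/(f)$ is a hypersurface ring it is Cohen--Macaulay, hence equidimensional and catenary, so $\dim R/\p + \dim R_\p = \dim R = d$ and the summand is at most $d\le n$. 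Finally, if $\p\supseteq I$ with $\dim R_\p = d$, then $\dim R/\p = 0$ and the summand is $\mu(I_\p)\le n$. Thus $\dim\sym(I)\le n$, and together with the previous paragraph this gives $\dim\sym(I) = n$, i.e. $\hgt\LL = d+1$.

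The only delicate part is the Huneke--Rossi case analysis, and even that is routine once one uses the two standard facts above: that a hypersurface ring is Cohen--Macaulay, hence satisfies $\dim R/\p + \dim R_\p = \dim R$ for all primes $\p$, and that $I$ is generated by $n$ elements. I would also note that only the hypotheses $n>d$ and that $I$ satisfies $G_d$ are needed here; the assumption that $J$ satisfies $G_{d+1}$ from \Cref{setting1.5} plays no role in this proposition.
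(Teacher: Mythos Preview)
Your proof is correct and follows essentially the same route as the paper: both reduce to showing $\dim\sym(I)=n$ via the Huneke--Rossi formula and carry out the same three-case analysis on primes $\p$ according to whether $\p\supseteq I$ and the size of $\dim R_\p$. You are slightly more explicit than the paper in separating the two inequalities (using $\LL\subseteq(x_1,\ldots,x_{d+1})$ for one direction) and in invoking the Cohen--Macaulayness of $R$ to justify $\dim R/\p+\dim R_\p=d$, but the argument is the same.
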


\begin{proof}
We show that $\dim \sym(I) =n$ which follows from the assumption that $I$ satisfies $G_d$ and the formula of Huneke and Rossi \cite[2.6]{HR}. Recall
$$\dim \sym(I) = \sup \big\{ \mu(I_\p) + \dim R/\p \hspace{1mm}\big|\hspace{1mm}\p\in \spec (R)\big\}$$
and to determine this supremum, we compute the value of $\mu(I_\p) + \dim R/\p$ in multiple cases. For a prime ideal $\p$, first note that if $\p \notin V(I)$ then $\mu(I_\p) =1$, hence $\mu(I_\p) + \dim R/\p \leq d+1$. If $\p\in V(I)$ and $\hgt \p< d$, then $\mu(I_\p) + \dim R/\p \leq \hgt \p +\dim R/\p =d$. Lastly, if $\p\in V(I)$ and $\hgt \p = d$, then $\mu(I_\p) + \dim R/\p = \mu(I) + \dim R/\p = n \geq d+1$. Hence $\dim \sym(I) =n$ and so $\hgt \LL=d+1$.
\end{proof}


\begin{proposition}\label{resint}
With the assumptions of \Cref{setting1.5} and $B$ a modified Jacobian dual of $\psi$, $\LL:(x_1,\ldots,x_{d+1}) = \LL+I_{d+1}(B)$.
\end{proposition}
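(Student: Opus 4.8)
The plan is to prove the two containments $\LL+I_{d+1}(B)\subseteq \LL:(x_1,\ldots,x_{d+1})$ and $\LL:(x_1,\ldots,x_{d+1})\subseteq \LL+I_{d+1}(B)$ separately. The first inclusion is the easy direction: since the generators of $\LL$ are precisely the entries of the row vector $[x_1\ \ldots\ x_{d+1}]\cdot B$, Cramer's rule (applied to the $(d+1)\times(d+1)$ submatrices of $B$) shows that $x_i\cdot\Delta\in\LL$ for every $i$ and every maximal minor $\Delta$ of $B$; hence $I_{d+1}(B)\subseteq\LL:(x_1,\ldots,x_{d+1})$, and of course $\LL\subseteq\LL:(x_1,\ldots,x_{d+1})$.

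For the reverse inclusion I would argue by a height/unmixedness comparison. First I would establish that $\LL+I_{d+1}(B)$ is unmixed of height $d+1$. We already know $\hgt\LL=d+1$ by \Cref{hgtofL}, so $\hgt\big(\LL+I_{d+1}(B)\big)\geq d+1$; for the reverse height bound and the unmixedness I would exhibit $\LL+I_{d+1}(B)$ as (up to radical, or after localizing) the expected saturation, or — more robustly — show directly that $S[T_1,\ldots,T_n]/(\LL+I_{d+1}(B))$ has no embedded or minimal primes of height exceeding $d+1$. A clean way to see this: over the open locus where some $x_i$ is a unit, the Jacobian-dual relations let one eliminate variables and identify the quotient with a localization of $\rr(I)$ (as in \Cref{Jasat} and \Cref{diffdefideals}), which has the right dimension; what remains is to control the closed locus $V(x_1,\ldots,x_{d+1})$, and here the $G_{d+1}$ hypothesis on $J$ (equivalently $\hgt I_{n-d}(\psi)\geq d+1$, so $I_1(\psi)=(x_1,\ldots,x_{d+1})$) forces enough minors of $B(\psi)$, and hence of $B$, to be nonzero to cut the dimension down. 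Then, because $\LL:(x_1,\ldots,x_{d+1})$ is the intersection of the primary components of $\LL$ not containing $(x_1,\ldots,x_{d+1})$, and $\LL+I_{d+1}(B)$ sits between $\LL$ and $\LL:(x_1,\ldots,x_{d+1})$ while being unmixed of the same height $d+1$ as $\LL$ with $(x_1,\ldots,x_{d+1})$ a nonzerodivisor on it, the two ideals must coincide.

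The step I expect to be the main obstacle is the unmixedness of $\LL+I_{d+1}(B)$, specifically ruling out components supported on $V(x_1,\ldots,x_{d+1})$ and showing the height does not jump there. This is where the specific structure of $B=[B(\psi)\,|\,\partial f]$ and the hypotheses $G_d$ on $I$, $G_{d+1}$ on $J$, and $I_1(\psi)=(x_1,\ldots,x_{d+1})$ all have to be used together; a naive generic-perfection or Eagon–Northcott argument does not directly apply because $B$ is not a generic matrix and $\partial f$ is a nonlinear column. I would handle it by a careful prime-by-prime analysis: for $\p\supseteq(x_1,\ldots,x_{d+1})$ with $\hgt\p=d+1$ I need $(\LL+I_{d+1}(B))_\p$ to have height $d+1$, which amounts to showing some maximal minor of $B$ avoids such a $\p$ modulo $\LL$; for $\p$ of larger height, a dimension count using $\dim\sym(I)=n$ from the proof of \Cref{hgtofL} together with the extra relations from $I_{d+1}(B)$ should suffice. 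Once unmixedness is in hand, the equality of the two ideals is formal.
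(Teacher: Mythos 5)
Your Cramer's-rule direction is fine, but the reverse inclusion rests on three claims that are false in the setting of \Cref{setting1.5}. First, $\LL:(x_1,\ldots,x_{d+1})$ is \emph{not} the intersection of the primary components of $\LL$ whose primes avoid $(x_1,\ldots,x_{d+1})$; that ideal is the saturation $\A=\LL:(x_1,\ldots,x_{d+1})^\infty$, and in this paper the two genuinely differ: by \Cref{IndexOfSat} and \Cref{nissmallest} the saturation index is exactly $m$, so for $m\geq 2$ one has $\LL:(x_1,\ldots,x_{d+1})\subsetneq \A$, and consequently the single colon \emph{does} have associated primes containing $(x_1,\ldots,x_{d+1})$. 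Second, the relevant height is not $d+1$ but $n$: every minimal prime of $\LL+I_{d+1}(B)$ contains either $\A$ or $(x_1,\ldots,x_{d+1})+I_{d+1}(B(\psi))$, both of height (at least) $n$, and under \Cref{setting1.5} one only has $n\geq d+1$ (the restriction $n=d+1$ appears only in \Cref{defidealsec}), so ``unmixed of height $d+1$'' is simply the wrong target. Third, $(x_1,\ldots,x_{d+1})$ is \emph{not} a nonzerodivisor modulo $\LL+I_{d+1}(B)$: for $m\geq 2$ the entries of $\partial f$ lie in $(x_1,\ldots,x_{d+1})$, hence $\LL+I_{d+1}(B)\subseteq (x_1,\ldots,x_{d+1})+I_{d+1}(B(\psi))$, an ideal of height exactly $n$, whose minimal primes are therefore associated to $\LL+I_{d+1}(B)$ and contain $(x_1,\ldots,x_{d+1})$. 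Because of these points, the concluding step ``sandwiched between $\LL$ and the colon and unmixed, hence equal to the colon'' is not formal: localizing at primes containing $(x_1,\ldots,x_{d+1})$ is precisely the case your argument cannot dispose of.

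The missing idea is residual intersection theory. The paper's route is: bound $\hgt\big(\LL+I_{d+1}(B)\big)\geq n$ using the dichotomy on minimal primes above (with $\hgt\big((x_1,\ldots,x_{d+1})+I_{d+1}(B(\psi))\big)=n$ coming from \cite[2.4]{BM} and the regularity of $x_1,\ldots,x_{d+1}$ modulo $I_{d+1}(B(\psi))$, and $\hgt\A= n$ since $\A$ defines $\rr(I)$ of dimension $d+1$); this gives $\hgt\big(\LL:(x_1,\ldots,x_{d+1})\big)\geq n$, so the colon is an $n$-residual intersection of the complete intersection $(x_1,\ldots,x_{d+1})$, and \cite[1.5 and 1.8]{CU} then yield both that its height is exactly $n$ and the structural equality $\LL:(x_1,\ldots,x_{d+1})=\LL+I_{d+1}(B)$. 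Some substitute for this Huneke--Ulrich input (or an equivalent explicit analysis at the primes containing $(x_1,\ldots,x_{d+1})$) is needed; as written, your proposal does not close the hard half of the equality.
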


\begin{proof}
Recall that $\LL = (\ell_1,\ldots,\ell_{n-1},f)$ where $[\ell_1\ldots \ell_{n-1}] = [x_1\ldots x_{d+1}]\cdot B(\psi)$ and with this, we claim that $(\ell_1,\ldots,\ell_{n-1}):(x_1,\ldots,x_{d+1})^\infty$ is a prime ideal of height $n-1$. Recall $J$ is a linearly presented perfect $S$-ideal of grade 2 satisfying $G_{d+1}$. As $(\ell_1,\ldots,\ell_{n-1})$ is the defining ideal of $\sym(J)$ it follows, in a similar manner to the proof of \Cref{Jasat}, that $(\ell_1,\ldots,\ell_{n-1}):(x_1,\ldots,x_{d+1})^\infty$ is the defining ideal of $\rr(J)$. As $\rr(J)$ is a domain of dimension $d+2$, indeed $(\ell_1,\ldots,\ell_{n-1}):(x_1,\ldots,x_{d+1})^\infty$ is a prime ideal of height $n-1$.

Now since $\psi$ consists of linear entries in $S$, $B(\psi)$ consists of linear entries in $k[T_1,\ldots,T_n]$. Hence by \cite[2.4]{BM}, $\hgt I_{d+1} (B(\psi)) = n-d-1$. With this and the fact that $x_1,\ldots,x_{d+1}$ is a regular sequence modulo $I_{d+1}(B(\psi))$, one has
$$\hgt \big((x_1,\ldots,x_{d+1}) +I_{d+1}(B(\psi))\big) = n.$$
Now as some power of $(x_1,\ldots,x_{d+1})$ multiples $\A=\LL:(x_1,\ldots,x_{d+1})^\infty$ into $\LL$ and hence into $\LL+I_{d+1}(B)$, it follows that any minimal prime of $\LL+I_{d+1}(B)$ contains either $(x_1,\ldots,x_{d+1})+ I_{d+1}(B(\psi))$ or $\A$. Thus $\hgt \big(\LL+I_{d+1}(B)\big) \geq n$ and since $\LL+I_{d+1}(B) \subset \LL:(x_1,\ldots,x_{d+1})$, we have $\hgt (\LL:(x_1,\ldots,x_{d+1})) \geq n$. Thus by \cite[1.5 and 1.8]{CU}, this ideal has height exactly $n$ and moreover $\LL+I_{d+1}(B) = \LL:(x_1,\ldots,x_{d+1})$.
\end{proof}


We turn our attention to the index of saturation of $\A=\LL:(x_1,\ldots,x_{d+1})^\infty$ and begin by stating a useful lemma.

\begin{lemma}[{\cite[3.5]{BM}}]\label{UlrichLemma}
Let $R$ be a Noetherian ring and $I$ a proper $R$-ideal. If $I^n \cap (0:I)=0$ for some $n\in \N$, then $I^n(0:I^\infty)=0$.
\end{lemma}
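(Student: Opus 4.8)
The plan is to use Noetherianity to replace the infinite saturation $(0:I^\infty)$ by a single colon ideal, and then run a descending induction that strips off one power of $I$ at a time, invoking the hypothesis $I^n\cap(0:I)=0$ at each stage.

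First I would observe that the ascending chain $(0:I)\subseteq (0:I^2)\subseteq (0:I^3)\subseteq\cdots$ stabilizes because $R$ is Noetherian; say it stabilizes at index $N_0$. Setting $N=\max(N_0,n)$, we then have $(0:I^N)=(0:I^{N+1})=\cdots=(0:I^\infty)=:K$, and moreover $N\ge n$. By the very definition of $K$ we get $I^N K=I^N(0:I^N)=0$. Next I would prove, by downward induction on $j$, that $I^jK=0$ for every $j$ with $n\le j\le N$. The base case $j=N$ is the equality just noted. For the inductive step, assume $I^{j+1}K=0$ for some $j$ with $n\le j<N$. Then $I\cdot(I^jK)=I^{j+1}K=0$, so $I^jK\subseteq(0:I)$; on the other hand $I^jK\subseteq I^j\subseteq I^n$ since $j\ge n$. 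Hence $I^jK\subseteq I^n\cap(0:I)=0$, so $I^jK=0$, which completes the induction. Specializing to $j=n$ gives $I^n(0:I^\infty)=I^nK=0$, which is the assertion.

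There is essentially no serious obstacle here; the one point needing care is to use Noetherianity to replace $(0:I^\infty)$ by the single ideal $K=(0:I^N)$ and to choose the stabilization index $N$ to be at least $n$, so that every term $I^jK$ arising in the induction is contained in $I^n$ — this is precisely what makes the hypothesis $I^n\cap(0:I)=0$ (as opposed to a statement about $I\cap(0:I)$) the right one to apply at each step. Note that properness of $I$ is not actually used in this argument.
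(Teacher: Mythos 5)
Your argument is correct: replacing $(0:I^\infty)$ by $K=(0:I^N)$ with $N\ge n$ via Noetherianity and then descending from $I^NK=0$ to $I^nK=0$, using at each step that $I^jK\subseteq I^n\cap(0:I)=0$, is exactly the standard mechanism behind this lemma. The paper itself gives no proof (it quotes \cite[3.5]{BM}), and your proof coincides in substance with the one there, which phrases the same descent by choosing a maximal $j$ with $I^jK\neq 0$ and deriving a contradiction; your observation that properness of $I$ is not needed is also accurate.
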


Note that if one were to consider an ideal of a complete intersection ring and its Rees algebra, the construction of the modified Jacobian dual and the previous results can be altered accordingly. For this reason, we provide a more general result on saturations. This result can also be obtained from the proof of \cite[6.1(a)]{KPU2}.

\begin{proposition}\label{GeneralIndexOfSat}
Let $R[x_1,\ldots,x_n]$ be a standard graded polynomial ring over Noetherian ring $R$ and let $f_1,\ldots, f_m \subset (x_1,\ldots, x_n)$ be homogeneous elements not necessarily of the same degree. Assume $m\geq n$ and $\deg f_1 \geq \deg f_2 \geq \cdots \geq \deg f_m$. Lastly, let $B$ denote an $n \times m$ matrix with $[f_1,\ldots,f_m] = [x_1,\ldots,x_n] \cdot B$. If $(f_1,\ldots,f_m):(x_1,\ldots, x_n) = (f_1,\ldots,f_m) +I_n(B)$, then one has the equality $(f_1,\ldots,f_m):(x_1,\ldots, x_n)^\infty =(f_1,\ldots,f_m):(x_1,\ldots, x_n)^N$ where the index $N$ is $N= \sum_{i=1}^{n}(\deg f_{i}-1)+1$.
\end{proposition}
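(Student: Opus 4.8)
The plan is to deduce the statement from \Cref{UlrichLemma}, applied to the ring $\bar{A}:=A/\mathfrak{a}$, where I abbreviate $A=R[x_1,\ldots,x_n]$, $\mathfrak{a}=(f_1,\ldots,f_m)$, and $\mathfrak{x}=(x_1,\ldots,x_n)$. Write $d_i=\deg f_i$, so that $N=\sum_{i=1}^n(d_i-1)+1$. First I would record the easy reductions: $A$, and hence $\bar{A}$, is Noetherian; $\mathfrak{x}\bar{A}$ is a proper ideal; and $0:_{\bar{A}}\mathfrak{x}^\infty=(\mathfrak{a}:_A\mathfrak{x}^\infty)/\mathfrak{a}$. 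Since $\mathfrak{a}:\mathfrak{x}^N\subseteq\mathfrak{a}:\mathfrak{x}^\infty$ always holds, the asserted equality is equivalent to $\mathfrak{x}^N\cdot(0:_{\bar{A}}\mathfrak{x}^\infty)=0$, so by \Cref{UlrichLemma} it is enough to establish
$$\mathfrak{x}^N\bar{A}\ \cap\ \big(0:_{\bar{A}}\mathfrak{x}\big)\ =\ 0.$$
Throughout I assume, as I may, that $B$ is homogeneous with its $i$-th column of degree $d_i-1$: this is the form in which the modified Jacobian dual arises in the applications, and is available here because each $f_i$ is homogeneous.

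For the displayed vanishing, note that $\mathfrak{x}^N=A_{\geq N}$ in the standard grading, so $\mathfrak{x}^N\bar{A}=\bar{A}_{\geq N}$, and that $\bar{A}_{\geq N}$ and $0:_{\bar{A}}\mathfrak{x}=(\mathfrak{a}:\mathfrak{x})/\mathfrak{a}$ are both homogeneous ideals; hence the intersection vanishes exactly when $(\mathfrak{a}:\mathfrak{x})_d=\mathfrak{a}_d$ for every $d\geq N$ (the degrees $d<N$ being trivial). This is where the hypothesis $\mathfrak{a}:\mathfrak{x}=\mathfrak{a}+I_n(B)$ enters, together with two observations. First, every $n\times n$ minor of $B$ coming from columns $i_1<\cdots<i_n$ is homogeneous of degree $\sum_{k=1}^n(d_{i_k}-1)\leq\sum_{k=1}^n(d_k-1)=N-1$, the inequality holding because $d_1\geq\cdots\geq d_m$ and $i_k\geq k$; thus $I_n(B)$ is generated in degrees at most $N-1$. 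Second, $I_n(B)\subseteq\mathfrak{a}:\mathfrak{x}$ (this is part of the hypothesis, and also follows directly from Cramer's rule applied to $[x_1\ldots x_n]\cdot B=[f_1\ldots f_m]$), hence $\mathfrak{x}\,I_n(B)\subseteq\mathfrak{x}(\mathfrak{a}:\mathfrak{x})\subseteq\mathfrak{a}$. Combining the two: for $d\geq N$ every homogeneous element of $(I_n(B))_d$ is a sum of terms (minor of degree $\leq N-1$) times (form of positive degree), so it lies in $\mathfrak{x}\,I_n(B)\subseteq\mathfrak{a}$; since $\mathfrak{a}$ is homogeneous, $(I_n(B))_d\subseteq\mathfrak{a}_d$, and therefore $(\mathfrak{a}:\mathfrak{x})_d=\mathfrak{a}_d+(I_n(B))_d=\mathfrak{a}_d$ for all $d\geq N$, as required.

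With $\mathfrak{x}^N\bar{A}\cap(0:_{\bar{A}}\mathfrak{x})=0$ in hand, \Cref{UlrichLemma} (applied with exponent $N$) yields $\mathfrak{x}^N\bar{A}\cdot\big(0:_{\bar{A}}(\mathfrak{x}\bar{A})^\infty\big)=0$, that is, $\mathfrak{x}^N(\mathfrak{a}:\mathfrak{x}^\infty)\subseteq\mathfrak{a}$, whence $\mathfrak{a}:\mathfrak{x}^\infty\subseteq\mathfrak{a}:\mathfrak{x}^N$; the reverse inclusion is immediate, so the two coincide. I expect the only delicate point to be the degree bookkeeping of the second paragraph — specifically, recognizing that the monotonicity $\deg f_1\geq\cdots\geq\deg f_m$ is precisely what caps the degrees of the minors of $B$ at $N-1$ (so that $\mathfrak{x}I_n(B)\subseteq\mathfrak{a}$ suffices to push all of $I_n(B)$ into $\mathfrak{a}$ in degrees $\geq N$), and that it is the full hypothesis on $\mathfrak{a}:\mathfrak{x}$, not merely the automatic containment $I_n(B)\subseteq\mathfrak{a}:\mathfrak{x}$, that makes this work. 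The reduction to a homogeneous $B$ is routine and should be noted explicitly.
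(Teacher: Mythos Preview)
Your proof is correct and follows essentially the same approach as the paper: both reduce to showing $\mathfrak{x}^N\bar{A}\cap(0:_{\bar{A}}\mathfrak{x})=0$ (the paper phrases this as $\big(\mathfrak{x}^N+\mathfrak{a}\big)\cap\big(\mathfrak{a}+I_n(B)\big)=\mathfrak{a}$), establish it via the degree bound $\leq N-1$ on the generators of $I_n(B)$ together with $\mathfrak{x}\,I_n(B)\subseteq\mathfrak{a}$, and then invoke \Cref{UlrichLemma}. Your write-up is in fact slightly more explicit than the paper's in two places: you spell out why the monotonicity $d_1\geq\cdots\geq d_m$ gives the bound $N-1$, and you flag the harmless reduction to a column-homogeneous $B$.
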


\begin{proof}
Clearly $(f_1,\ldots,f_m):(x_1,\ldots, x_n)^N \subseteq (f_1,\ldots,f_m):(x_1,\ldots, x_n)^\infty$ so we need only exhibit the reverse containment. We claim that 
$$\big((x_1,\ldots, x_n)^N +(f_1,\ldots,f_m)\big )\cap \big((f_1,\ldots,f_m) + I_n(B)\big) = (f_1,\ldots,f_m)$$
by first noting that the ideals involved are homogeneous. Notice that $I_n(B)$ is generated by elements of degree at most $N-1$. Hence any element of this intersection of degree at least $N$ is contained in $(f_1,\ldots,f_m) + (x_1,\ldots, x_n)^N \cap I_n(B) \subset (f_1,\ldots,f_m) + (x_1,\ldots, x_n)I_n(B) \subset (f_1,\ldots,f_m)$. Also any element of this intersection of degree strictly smaller than $N$ is contained in $(f_1,\ldots,f_m)$. Now applying \Cref{UlrichLemma} to the image of $(x_1,\ldots, x_n)$ in $R[x_1,\ldots,x_n]/(f_1,\ldots,f_m)$, we find that 
$$\big((x_1,\ldots, x_n)^N +(f_1,\ldots,f_m) \big)\big((f_1,\ldots,f_m):(x_1,\ldots, x_n)^\infty\big)\subset (f_1,\ldots,f_m).$$
Hence $(f_1,\ldots,f_m):(x_1,\ldots, x_n)^\infty \subseteq (f_1,\ldots,f_m): \big((x_1,\ldots, x_n)^N +(f_1,\ldots,f_m) \big) = (f_1,\ldots,f_m):(x_1,\ldots, x_n)^N$ giving the required containment.
\end{proof}



\begin{corollary}\label{IndexOfSat}
With the assumptions of \Cref{setting1.5}, $\A=\LL:(x_1,\ldots,x_{d+1})^m$.
\end{corollary}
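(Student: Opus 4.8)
The plan is to apply \Cref{GeneralIndexOfSat} directly, with the role of $R$ played by the field $k$, the polynomial ring $R[x_1,\ldots,x_n]$ replaced by $k[x_1,\ldots,x_{d+1}][T_1,\ldots,T_n]$ viewed as standard graded in the variables $x_1,\ldots,x_{d+1}$ (with the $T_i$ absorbed into the coefficient ring), and the homogeneous elements $f_1,\ldots,f_m$ taken to be the generators $\ell_1,\ldots,\ell_{n-1},f$ of $\LL$. First I would record that, in this grading, each $\ell_i$ has degree $1$ (it is $[T_1\ldots T_n]$ times a column of $\psi$, which is linear in the $x_j$), while $f$ has degree $m$; after reordering so the degrees are non-increasing, the degree sequence is $m,1,1,\ldots,1$. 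Since $n-1\geq d\geq d+1-1$, we have at least $d+1$ generators, so the hypothesis $\mathrm{(\#\ generators)}\geq \mathrm{(\#\ variables)}$ is met.

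The second ingredient needed is the hypothesis of \Cref{GeneralIndexOfSat} that the colon by the ideal $(x_1,\ldots,x_{d+1})$ (rather than its infinite saturation) already equals $\LL+I_{d+1}(B)$, where $B$ is the matrix expressing the generators as $[x_1\ldots x_{d+1}]\cdot B$. This is precisely the content of \Cref{resint}, with $B$ a modified Jacobian dual of $\psi$ — note that by construction $[\ell_1\ldots\ell_{n-1}\ |\ f] = [x_1\ldots x_{d+1}]\cdot[B(\psi)\,|\,\partial f] = [x_1\ldots x_{d+1}]\cdot B$, so $B$ is exactly the matrix required in the statement of \Cref{GeneralIndexOfSat}. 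Thus both hypotheses of that proposition are verified.

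With both hypotheses in hand, \Cref{GeneralIndexOfSat} yields
$$\LL:(x_1,\ldots,x_{d+1})^\infty = \LL:(x_1,\ldots,x_{d+1})^N, \qquad N = \sum_{i=1}^{d+1}(\deg f_i - 1) + 1.$$
Since the degree sequence (in non-increasing order) is $m,1,1,\ldots,1$, exactly one of the relevant degrees equals $m$ and the rest are $1$, so $N = (m-1) + 0 + \cdots + 0 + 1 = m$. Hence $\A = \LL:(x_1,\ldots,x_{d+1})^\infty = \LL:(x_1,\ldots,x_{d+1})^m$, which is the claim. Finally one should check the bookkeeping point that $\LL:(x_1,\ldots,x_{d+1})^m \subseteq \A$ is automatic, and the reverse is what \Cref{GeneralIndexOfSat} supplies.

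**The main obstacle** is essentially presentational rather than mathematical: one must be careful that the grading used to invoke \Cref{GeneralIndexOfSat} is the grading by the $x_j$ alone, treating $k[T_1,\ldots,T_n]$ as the base ring — so that $\ell_i$ genuinely has degree $1$ and $f$ has degree $m$ — and that $\LL$, its colon ideals, and $I_{d+1}(B)$ are all homogeneous in this grading (the entries of $B(\psi)$ are linear in the $T$'s with coefficients in $k$, hence of $x$-degree $0$, so $I_{d+1}(B)$ is generated in $x$-degrees between $0$ and $m-1$, consistent with the proof of \Cref{GeneralIndexOfSat}). Once this grading is fixed, the corollary is immediate.
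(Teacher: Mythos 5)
Your proposal is correct and is precisely the argument the paper intends for this corollary: apply \Cref{GeneralIndexOfSat} over the coefficient ring $k[T_1,\ldots,T_n]$ with the $x$-grading, using \Cref{resint} to verify the colon hypothesis, and compute $N=(m-1)+d\cdot 0+1=m$ from the degree sequence $m,1,\ldots,1$. The only slip is the opening phrase ``the role of $R$ played by the field $k$,'' which should read $k[T_1,\ldots,T_n]$, but you correct this immediately by absorbing the $T_i$ into the coefficient ring, so the argument stands as written.
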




\subsection{The Relation between \texorpdfstring{$\rr(J)$}{R(J)} and \texorpdfstring{$\rr(I)$}{R(I)}}
Following the path laid out in \cite{KPU1}, we find a ring which maps onto $\rr(I)$ such that the kernel of this map is an ideal of height one and with this, we provide an alternative description of $\A$. In our situation, we will take such a ring to be the Rees algebra of $J$. Using the description and properties of the defining ideal of $\rr(J)$ from \cite{MU}, we study how these two Rees algebras relate to each other. 

Before we begin, we must update our notation with the biggest change being the meaning of $\overline{\,\cdot\,}$ to denote images. We adopt the following notation for the duration of this section and are careful to distinguish between the two interpretations of this symbol for the rest of the paper.

\begin{notation}\label[notation]{notation2}
Recall from \cite{MU} that, as $J$ is linearly presented and satisfies $G_{d+1}$, we have $\rr(J) \cong S[T_1,\ldots,T_n]/ \H$ where $\H =(\ell_1,\ldots,\ell_{n-1}) +I_{d+1}(B(\psi))$. Let $\overline{\,\cdot\,}$ denote images modulo $\H$ in $\rr(J)$. Additionally, let $B'$ be the $d\times n-1$ matrix obtained by deleting the last row of $B(\psi)$ and define the $S[T_1,\ldots,T_n]$-ideal $\K= (\ell_1,\ldots,\ell_{n-1}) +I_d(B')+(x_{d+1})$.
\end{notation}

\begin{proposition}\label{PropertiesOfA}
The ring $\rr(J)$ is a Cohen-Macaulay domain of dimension $d+2$ and the ideals $\overline{\K}$ and $\overline{(x_1,\ldots,x_{d+1})}$ are Cohen-Macaulay $\rr(J)$-ideals of height 1. Moreover, $\overline{(x_1,\ldots,x_{d+1})}$ is a prime ideal.
\end{proposition}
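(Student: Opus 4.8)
The plan is to extract everything from the known structure of $\rr(J)$ via \cite{MU}, and then to analyze the two ideals $\overline{\K}$ and $\overline{(x_1,\ldots,x_{d+1})}$ as quotients of $S[T_1,\ldots,T_n]$ by lifting the computations to the polynomial ring. Since $J$ is a linearly presented perfect $S$-ideal of grade $2$ satisfying $G_{d+1}$, the results of \cite{MU} give that $\rr(J) \cong S[T_1,\ldots,T_n]/\H$ with $\H = (\ell_1,\ldots,\ell_{n-1}) + I_{d+1}(B(\psi))$, that $\rr(J)$ is Cohen-Macaulay, and that it is a domain of dimension $d+2$ (the dimension being $\dim S + 1 = d+2$ since $J$ has positive grade). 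This settles the first sentence with a citation.

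Next I would handle $\overline{(x_1,\ldots,x_{d+1})}$. The ideal $(x_1,\ldots,x_{d+1}) + \H$ of $S[T_1,\ldots,T_n]$ is, up to the polynomial variables $T_1,\ldots,T_n$, cut out by setting all $x_i = 0$: since the $\ell_j$ are linear in the $x$'s they lie in $(x_1,\ldots,x_{d+1})$, and the entries of $B(\psi)$ are linear forms in $T_1,\ldots,T_n$, so modulo $(x_1,\ldots,x_{d+1})$ the ideal $\H$ becomes $I_{d+1}(B(\psi))$ in $k[T_1,\ldots,T_n]$. Thus $\rr(J)/\overline{(x_1,\ldots,x_{d+1})} \cong k[T_1,\ldots,T_n]/I_{d+1}(B(\psi))$, which is the special fiber ring $\ff(J)$. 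By \cite[2.4]{BM}, $\hgt I_{d+1}(B(\psi)) = n-d-1$, so this quotient has dimension $d+1$; hence $\overline{(x_1,\ldots,x_{d+1})}$ has height $1$ in the $(d+2)$-dimensional ring $\rr(J)$. That it is prime follows because $\ff(J)$ is a domain: $I_{d+1}(B(\psi))$ is the defining ideal of the special fiber, which by \cite{MU} is a domain (it is the homogeneous coordinate ring of the variety parametrized by the linear system). For Cohen-Macaulayness of $\overline{(x_1,\ldots,x_{d+1})}$ as an $\rr(J)$-ideal, I would argue that $x_1,\ldots,x_{d+1}$ (or rather a regular sequence drawn from among them after a generic change of coordinates) forms a regular sequence on the Cohen-Macaulay ring $\rr(J)$ — indeed $\rr(J)/\overline{(x_1,\ldots,x_{d+1})} = \ff(J)$ is Cohen-Macaulay by \cite{MU} and has the "right" dimension $d+1 = (d+2) - 1$, so the colon/quotient is perfect; alternatively one checks that an ideal generated by part of a homogeneous system of parameters on a Cohen-Macaulay graded ring, whose quotient is again Cohen-Macaulay, is itself a Cohen-Macaulay module.

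For $\overline{\K}$, recall $\K = (\ell_1,\ldots,\ell_{n-1}) + I_d(B') + (x_{d+1})$ where $B'$ is $B(\psi)$ with its last row deleted. Modulo $(x_{d+1})$ one has $\ell_j \equiv [x_1 \ldots x_d]\cdot(\text{first } d \text{ rows of the } j\text{th column of } B(\psi)) = [x_1\ldots x_d]\cdot B'_j$, so in $\overline{R} := S/(x_{d+1})[T_1,\ldots,T_n] = k[x_1,\ldots,x_d,T_1,\ldots,T_n]$ the ideal $\K$ becomes $(\tilde\ell_1,\ldots,\tilde\ell_{n-1}) + I_d(B')$ where $[\tilde\ell_1\ldots\tilde\ell_{n-1}] = [x_1\ldots x_d]\cdot B'$ and $B'$ is a $d\times(n-1)$ matrix of linear forms in $T$. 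This is exactly the setup of \cite{MU} for the Rees algebra of a linearly presented grade-$2$ perfect ideal on $d$ variables — i.e. $\overline{R}/\K \cong \rr(J')$ for an auxiliary ideal $J'$ (or at least a ring of the same dimension $d+1$); hence $\K$ has height $n$ in $S[T_1,\ldots,T_n]$, so $\overline{\K}$ has height $n - (n-1) = 1$ in $\rr(J)$. Cohen-Macaulayness of $\overline{\K}$ follows because the corresponding quotient $S[T_1,\ldots,T_n]/\K$ is Cohen-Macaulay by \cite{MU}, and $\rr(J)/\overline{\K}$ is this same ring, which is perfect of the correct codimension over the Cohen-Macaulay ring $\rr(J)$.

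The main obstacle I anticipate is the identification in the previous paragraph: showing that $S[T_1,\ldots,T_n]/\K$ genuinely falls under the purview of \cite{MU}, i.e. that $B'$ (equivalently the auxiliary ideal $J'$ presented by the corresponding submatrix of $\psi$) still satisfies the $G_{d}$-type condition needed for \cite{MU} to apply — this requires controlling the heights of ideals of minors of the truncated matrix $B'$, which is where a genericity of the coordinates $x_1,\ldots,x_{d+1}$ (so that $x_{d+1}$ is not a zero-divisor on any relevant quotient and the truncation behaves well) will be used. A secondary point of care is making the Cohen-Macaulayness claims precise: "$\overline{\K}$ is a Cohen-Macaulay ideal" should be read as "$\rr(J)/\overline{\K}$ is a Cohen-Macaulay ring" (equivalently, that $\overline{\K}$ is unmixed and the quotient is Cohen-Macaulay), and similarly for $\overline{(x_1,\ldots,x_{d+1})}$; once the height-$1$ and the Cohen-Macaulayness of the quotient rings are in hand via \cite{MU}, the statement follows since height-$1$ perfect ideals in a Cohen-Macaulay ring are Cohen-Macaulay as modules.
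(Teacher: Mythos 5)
Your treatment of the first two assertions is essentially fine and close to the paper's: the Cohen--Macaulayness, domain property and dimension of $\rr(J)$ come from \cite{MU} and standard facts, and for $\overline{(x_1,\ldots,x_{d+1})}$ the paper, like you, identifies $\rr(J)/\overline{(x_1,\ldots,x_{d+1})}$ with $\ff(J)$ and quotes that $\ff(J)$ is a Cohen--Macaulay domain of dimension $d+1$ (it gets the dimension from $\ell(J)=d+1$ via \cite[4.3]{UV} and the CM domain property from \cite[proof of 2.4]{Morey} and \cite{CGPU}, rather than from $\hgt I_{d+1}(B(\psi))=n-d-1$, but either computation works); your aside about $x_1,\ldots,x_{d+1}$ forming a regular sequence cannot be right as stated, since the ideal has height one in $\rr(J)$, but your fallback reading --- height one with Cohen--Macaulay quotient --- is what is needed. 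The genuine gap is in your argument for $\overline{\K}$, and it is exactly the obstacle you flagged and did not resolve: there is no reason that the truncated data fall under \cite{MU}. Reducing $\psi$ modulo $x_{d+1}$ can collapse the heights of its Fitting ideals, so the putative ideal $J'$ of $k[x_1,\ldots,x_d]$ need not be perfect of grade $2$ nor satisfy $G_d$; without that, $(\tilde{\ell}_1,\ldots,\tilde{\ell}_{n-1})+I_d(B')$ is not known to be the defining ideal of any Rees algebra, and neither the height bound $\hgt\K\geq n$ nor the Cohen--Macaulayness of $S[T_1,\ldots,T_n]/\K$ follows. Genericity of coordinates is not available to you: $x_{d+1}$, the deleted row of $B(\psi)$, and hence $\K$ itself are fixed in \Cref{notation2}, and the later results (\Cref{colons}, \Cref{DandA}) use this specific $\K$, so the proposition must be proved for the given coordinate.

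The paper closes this gap with no auxiliary ideal, using residual intersections instead. Since $\H$ is prime of height $n-1$ (it defines the domain $\rr(J)$) and $x_{d+1}\in\K\setminus\H$, every prime containing $\K$ strictly contains $\H$, so $\hgt\K\geq n$; writing $\K=(\tilde{\ell}_1,\ldots,\tilde{\ell}_{n-1})+I_d(B')+(x_{d+1})$, it follows that $(\tilde{\ell}_1,\ldots,\tilde{\ell}_{n-1})+I_d(B')$ has height at least $n-1$. By Cramer's rule this ideal is contained in $(\tilde{\ell}_1,\ldots,\tilde{\ell}_{n-1}):(x_1,\ldots,x_d)$, and then \cite[1.5 and 1.8]{CU} (residual intersections of the complete intersection $(x_1,\ldots,x_d)$) give that it has height exactly $n-1$, is Cohen--Macaulay, and equals that colon ideal. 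Since its generators involve only $x_1,\ldots,x_d$ and $T_1,\ldots,T_n$, the element $x_{d+1}$ is a nonzerodivisor on the corresponding quotient, so $\K$ is Cohen--Macaulay of height $n$, and therefore $\overline{\K}$ is a height-one Cohen--Macaulay $\rr(J)$-ideal. If you wish to keep your route you would have to prove the grade-two perfection and $G_d$ conditions for the truncated matrix, which do not follow from the hypotheses; some substitute such as the residual-intersection argument above is needed.
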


\begin{proof}
The statement that $\rr(J)$ is a domain of dimension $d+2$ follows easily from the fact that $S$ is a domain of dimension $d+1$ and $J$ is an ideal of positive height \cite{VasconcelosBook}. Moreover, $\rr(J)$ is Cohen-Macaulay by \cite[1.3]{MU}. Now as $J$ is a grade 2 perfect $S$-ideal satisfying $G_{d+1}$, its analytic spread is $\ell(J)=d+1$ \cite[4.3]{UV}. This is the dimension of the special fiber ring $\ff(J)$ which is a Cohen-Macaulay domain as $J$ is generated by homogeneous forms of the same degree \cite[proof of 2.4]{Morey}\cite{CGPU}. Thus $\overline{(x_1,\ldots,x_{d+1})}$ is indeed a Cohen-Macaulay prime ideal of height 1.

It remains to show $\overline{\K}$ is a Cohen-Macaulay $\rr(J)$-ideal of height 1. Notice that $\H$ has height $n-1$, hence the ideal $\K$ has height at least $n$ as it contains $\H$ and $x_{d+1} \in \K\setminus \H$. Observe that $\K$ can be written as $(\tilde{\ell}_1,\ldots,\tilde{\ell}_{n-1}) +I_d(B')+(x_{d+1})$ where $[\tilde{\ell}_1 \ldots \tilde{\ell}_{n-1}] = [x_1 \ldots x_d] \cdot B'$. Thus $(\tilde{\ell}_1,\ldots,\tilde{\ell}_{n-1}) +I_d(B')$ has height at least $n-1$, hence it has height exactly $n-1$, is Cohen-Macaulay, and $(\tilde{\ell}_1,\ldots,\tilde{\ell}_{n-1}) +I_d(B') = (\tilde{\ell}_1,\ldots,\tilde{\ell}_{n-1}):(x_1,\ldots,x_d)$ by \cite[1.5 and 1.8]{CU}. Thus $\K$ has height $n$ and is Cohen-Macaulay. Thus in $\rr(J)$, $\overline{\K}$ is a Cohen-Macaulay ideal of height 1. 
\end{proof}

\begin{proposition}\label{colons}
With $\K$ as in \Cref{notation2}, we have
$$\overline{(x_1,\ldots,x_{d+1})}^i = \overline{(x_1,\ldots,x_{d+1})}^{(i)}= (\overline{x_{d+1}}^i) :_{\rr(J)} \overline{\K}^{(i)}$$
and
$$\overline{\K}^{(i)} = (\overline{x_{d+1}}^i) :_{\rr(J)}  \overline{(x_1,\ldots,x_{d+1})}^{(i)}  $$
for all $i\in \N$.
\end{proposition}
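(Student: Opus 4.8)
The plan is to establish the two displayed chains of equalities by exploiting the fact, recorded in \Cref{PropertiesOfA}, that $\rr(J)$ is a Cohen-Macaulay domain and that $\overline{(x_1,\ldots,x_{d+1})}$ and $\overline{\K}$ are Cohen-Macaulay height-$1$ ideals, the former being prime. First I would dispense with the symbolic-power identity $\overline{(x_1,\ldots,x_{d+1})}^i = \overline{(x_1,\ldots,x_{d+1})}^{(i)}$: since $\overline{(x_1,\ldots,x_{d+1})}$ is generated by forms of the same degree and its special fiber ring is a Cohen-Macaulay domain (as invoked in \Cref{PropertiesOfA} via the analytic spread of $J$), the ordinary powers of this prime ideal have no embedded primes — more precisely, its associated graded ring, or equivalently $\ff(J)$, is a domain, so the $\overline{(x_1,\ldots,x_{d+1})}$-primary component of the $i$-th power is the whole $i$-th power. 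Alternatively one can argue that $\rr(J)_{\overline{(x_1,\ldots,x_{d+1})}}$ is a DVR (a one-dimensional localization of a normal domain, or directly a valuation ring since the fiber cone is a domain), so powers and symbolic powers agree locally at the unique minimal prime, and the power $\overline{(x_1,\ldots,x_{d+1})}^i$ is unmixed of height $1$.

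Next I would prove the two colon formulas. The containments $\overline{x_{d+1}}^i \in \overline{(x_1,\ldots,x_{d+1})}^i$ and $\overline{x_{d+1}}^i \in \overline{\K}^i \subseteq \overline{\K}^{(i)}$ are clear since $x_{d+1} \in \K$ by construction in \Cref{notation2}. The key point is the case $i=1$: I claim $(\overline{x_{d+1}}) = \overline{(x_1,\ldots,x_{d+1})} \cap \overline{\K}$ as height-$1$ ideals of the Cohen-Macaulay domain $\rr(J)$, and moreover this is a reduced primary decomposition with exactly these two associated primes (the second being the minimal primes of $\overline{\K}$, which is unmixed of height $1$ by \Cref{PropertiesOfA}). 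Granting this, linkage in the Cohen-Macaulay ring $\rr(J)$ via the principal ideal $(\overline{x_{d+1}})$ — which is a nonzerodivisor, so a complete intersection of height $1$ — gives $(\overline{x_{d+1}}) : \overline{\K} = \overline{(x_1,\ldots,x_{d+1})}$ and $(\overline{x_{d+1}}) : \overline{(x_1,\ldots,x_{d+1})} = \overline{\K}$ directly, since both sides are unmixed height-$1$ ideals and the colon recovers the "other half" of the intersection. To see the decomposition of $(\overline{x_{d+1}})$, I would lift to $S[T_1,\ldots,T_n]$: one checks that $\big(\H + (x_{d+1})\big) : (x_1,\ldots,x_{d+1})^\infty = \K$ using the description $\H = (\ell_1,\ldots,\ell_{n-1}) + I_{d+1}(B(\psi))$ and the row/column structure of $B(\psi) = \left[\begin{smallmatrix} B' \\ \ast \end{smallmatrix}\right]$, so that modulo $x_{d+1}$ the ideal of maximal minors $I_{d+1}(B(\psi))$ becomes $x_{d+1}$-torsion-free-complemented by $I_d(B')$; this is exactly the computation embedded in the proof of \Cref{PropertiesOfA}. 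For general $i$, I would pass to the localization $\rr(J)_{\overline{(x_1,\ldots,x_{d+1})}}$, a DVR in which $\overline{\K}$ localizes to the unit ideal (since $\overline{\K}$ and $\overline{(x_1,\ldots,x_{d+1})}$ have no common minimal prime — check $\hgt(\K + (x_1,\ldots,x_{d+1})$-lift$) > n$), reducing the symbolic power $\overline{(x_1,\ldots,x_{d+1})}^{(i)}$ to $\overline{x_{d+1}}^i$ times a unit there, and symmetrically localize at the minimal primes of $\overline{\K}$; unmixedness of all ideals involved then upgrades the local equalities to global ones.

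The main obstacle I anticipate is the $i=1$ identity $(\overline{x_{d+1}}) = \overline{(x_1,\ldots,x_{d+1})}\cap\overline{\K}$ together with the claim that these are precisely its two associated primes — equivalently, that $\rr(J)/(\overline{x_{d+1}})$ is reduced, or at least unmixed of dimension $d+1$ with exactly these minimal primes. This requires genuinely using the structure of $B(\psi)$ and the fact that $J$ satisfies $G_{d+1}$ (so that $I_d(B')$ has the expected height $n-1$ after setting $x_{d+1}=0$, via \cite[2.4]{BM} and \cite[1.5 and 1.8]{CU} as in \Cref{PropertiesOfA}); the delicate part is ruling out embedded components of $(\overline{x_{d+1}})$, for which I would verify that $\overline{x_{d+1}}$ is a nonzerodivisor and that $\rr(J)/(\overline{x_{d+1}})$ is Cohen-Macaulay — the latter follows since $\rr(J)$ is Cohen-Macaulay and $\overline{x_{d+1}}$ is a single nonzerodivisor — so $(\overline{x_{d+1}})$ is automatically unmixed with no embedded primes, and it then only remains to identify its two minimal primes as $\overline{(x_1,\ldots,x_{d+1})}$ and the minimal primes of $\overline{\K}$. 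Once that is in hand, the remaining steps are the routine linkage and localization arguments sketched above.
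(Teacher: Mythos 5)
Your proof of the first equality, $\overline{(x_1,\ldots,x_{d+1})}^i=\overline{(x_1,\ldots,x_{d+1})}^{(i)}$, has a genuine gap: the associated graded ring of $\overline{(\x)}$ in $\rr(J)$ is \emph{not} $\ff(J)$ (the latter is only its degree-zero piece), and knowing that $\ff(J)$ is a domain does not exclude embedded primes of the powers. What makes the statement true is the bigrading: since $\rr(J)=S[T_1,\ldots,T_n]/\H$ is graded in the $x$-variables and $\overline{(\x)}$ is generated by the elements of $x$-degree one, one has $\gr_{\overline{(\x)}}(\rr(J))\cong\rr(J)$, a domain, and this is the argument the paper uses. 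Your fallback claims are also unsupported: $\rr(J)$ is not known to be normal, ``the fiber cone is a domain'' does not imply that $\rr(J)_{\overline{(\x)}}$ is a valuation ring, and the assertion that $\overline{(\x)}^i$ is ``unmixed of height $1$'' is precisely what has to be proved.

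The second and more serious gap is the decomposition $(\overline{x_{d+1}})=\overline{(\x)}\cap\overline{\K}$ with its components identified. This is not ``the computation embedded in the proof of \Cref{PropertiesOfA}''; that proof only yields heights and Cohen--Macaulayness of $\overline{\K}$ and $\overline{(\x)}$. The essential input, which never appears in your proposal, is the Cramer-rule relation of \Cref{crlemma}: for a $d\times d$ minor $w$ of $B'$ one has $\overline{x_k}\,\overline{w}=\pm\,\overline{x_{d+1}}\,\overline{w_k}$ in $\rr(J)$, whence $\overline{(\x)}\,\overline{\K}\subseteq(\overline{x_{d+1}})$. This containment is what shows that every height-one prime containing $\overline{x_{d+1}}$ is either $\overline{(\x)}$ or a minimal prime of $\overline{\K}$, and, combined with $\overline{\K}\nsubseteq\overline{(\x)}$ (which itself needs the observation $I_d(B')\neq 0$ and the degree argument), that $\overline{(\x)}_{\overline{(\x)}}=(\overline{x_{d+1}})_{\overline{(\x)}}$. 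Note also that identifying the minimal primes of the unmixed ideal $(\overline{x_{d+1}})$ does not determine its primary components, so even granting your list of associated primes the equality $(\overline{x_{d+1}})=\overline{(\x)}\cap\overline{\K}$ (equivalently, that $\overline{x_{d+1}}$ generates $\overline{(\x)}$ and $\overline{\K}$ locally at the relevant primes) still requires the computation above. The paper's proof bypasses the decomposition entirely: it deduces $\overline{(\x)}^{(i)}\subseteq(\overline{x_{d+1}}^i):\overline{\K}^{(i)}$ from $\overline{(\x)}\,\overline{\K}\subseteq(\overline{x_{d+1}})$ by taking powers and localizing at height-one primes, and obtains the reverse containment from $\overline{\K}\nsubseteq\overline{(\x)}$ together with the fact that $\overline{(\x)}$ is the unique associated prime of $\overline{(\x)}^{(i)}$. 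If you add \Cref{crlemma} and the $I_d(B')\neq 0$ argument, your linkage/localization scheme can be completed, but as written both load-bearing steps are missing.
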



\begin{proof}
First, by setting the degrees of the $x_i$ to 1 and the degrees of the $T_i$ to 0 temporarily, we see that $\gr_{\overline{(\x)}}{(\rr(J))} \cong \rr(J)$. As $\rr(J)$ is a domain, it follows that $\overline{(x_1,\ldots,x_{d+1})}^i = \overline{(x_1,\ldots,x_{d+1})}^{(i)}$. To show the proceeding equality, in $\rr(J)$ we have the containment $\overline{(x_1,\ldots,x_{d+1})}\overline{\K}\subseteq (\overline{x_{d+1}})$, hence $\overline{(x_1,\ldots,x_{d+1})}^i\overline{\K}^i\subseteq (\overline{x_{d+1}}^i)$. Now localizing at height one primes of $\rr(J)$, $\overline{(x_1,\ldots,x_{d+1})}^{(i)}\overline{\K}^{(i)}\subseteq (\overline{x_{d+1}}^i)$ and so $\overline{(x_1,\ldots,x_{d+1})}^{(i)}\subseteq (\overline{x_{d+1}}^i):\overline{\K}^{(i)}$. Writing $\K=(\tilde{\ell}_1,\ldots,\tilde{\ell}_{n-1}) +I_d(B')+(x_{d+1})$ as earlier, recall $(\tilde{\ell}_1,\ldots,\tilde{\ell}_{n-1}) +I_d(B') = (\tilde{\ell}_1,\ldots,\tilde{\ell}_{n-1}):(x_1,\ldots,x_d)$ and this is an ideal of height $n-1$. With this, we claim $0\neq I_d(B')\subset k[T_1,\ldots,T_n]$. If $I_d(B')=0$, then $(\tilde{\ell}_1,\ldots,\tilde{\ell}_{n-1}):(x_1,\ldots,x_d) = (\tilde{\ell}_1,\ldots,\tilde{\ell}_{n-1})$ and so $(x_1,\ldots,x_d)$ contains an element regular on $S[T_1,\ldots,T_n]/(\tilde{\ell}_1,\ldots,\tilde{\ell}_{n-1})$. However, this is impossible as one would also have $\hgt (\tilde{\ell}_1,\ldots,\tilde{\ell}_{n-1}) =n-1\geq d$. Now as $I_d(B')\neq 0$, we see $\overline{\K}\nsubseteq \overline{(x_1,\ldots,x_{d+1})}$ by degree considerations. Furthermore, as $\overline{(x_1,\ldots,x_{d+1})}$ is the unique associated prime of $\overline{(x_1,\ldots,x_{d+1})}^{(i)}$ it follows that $(\overline{x_{d+1}}^i):\overline{\K}^{(i)} \subseteq \overline{(x_1,\ldots,x_{d+1})}^{(i)}$.

A similar argument shows $\overline{\K}^{(i)} = (\overline{x_{d+1}}^i) :_{\rr(J)}  \overline{(x_1,\ldots,x_{d+1})}^{(i)}$.
\end{proof}

With this, we give a description of $\overline{\A}$ as an $\rr(J)$-ideal. Consider the fractional ideal $\D=\frac{\overline{f}\hspace{0.5mm}\overline{\K}^{(m)}}{\overline{x_{d+1}}^m}$ and note this is actually an $\rr(J)$-ideal as $f\in (x_1,\ldots,x_{d+1})^m$.

\begin{theorem}\label{DandA}
The $\rr(J)$-ideals $\D$ and $\overline{\A}$ are equal.
\end{theorem}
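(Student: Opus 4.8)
The plan is to rewrite both $\overline{\A}$ and $\D$ as colon ideals in $\rr(J)$ and then identify them by a short computation that uses only that $\rr(J)$ is a domain. First I would recall from \Cref{notation1,notation2} that $\LL=(\ell_1,\ldots,\ell_{n-1})+(f)$, that $(\ell_1,\ldots,\ell_{n-1})$ is the defining ideal of $\sym(J)$, and that $\H=(\ell_1,\ldots,\ell_{n-1}):(x_1,\ldots,x_{d+1})^\infty$ is the defining ideal of $\rr(J)$; in particular $\H\subseteq\LL:(x_1,\ldots,x_{d+1})^\infty=\A$, so $\overline{\A}=\A/\H$ is a genuine ideal of $\rr(J)$ (it is the kernel of the natural surjection $\rr(J)\twoheadrightarrow\rr(I)$; cf. \Cref{diffdefideals}). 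The first step is to prove
$$\overline{\A}=(\overline{f}):_{\rr(J)}\overline{(x_1,\ldots,x_{d+1})}^m.$$
Picking $N\gg0$ with $(x_1,\ldots,x_{d+1})^N\H\subseteq(\ell_1,\ldots,\ell_{n-1})$ gives $(x_1,\ldots,x_{d+1})^N\big(\H+(f)\big)\subseteq(\ell_1,\ldots,\ell_{n-1})+(f)=\LL$, hence $\LL\subseteq\H+(f)\subseteq\A$; applying $(-):(x_1,\ldots,x_{d+1})^m$ and using $\A=\LL:(x_1,\ldots,x_{d+1})^m$ from \Cref{IndexOfSat} together with $\A:(x_1,\ldots,x_{d+1})^m=\A$ (as $\A$ is $(x_1,\ldots,x_{d+1})$-saturated by construction) squeezes out $\A=\big(\H+(f)\big):(x_1,\ldots,x_{d+1})^m$. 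Reducing modulo $\H$ — legitimate since $\H\subseteq\H+(f)$, so the colon commutes with the quotient — and noting $\big(\H+(f)\big)/\H=(\overline{f})$ yields the displayed formula.

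Next, \Cref{colons} with $i=m$ gives $\overline{\K}^{(m)}=(\overline{x_{d+1}}^m):_{\rr(J)}\overline{(x_1,\ldots,x_{d+1})}^m$, so it remains to check, inside the fraction field of the domain $\rr(J)$ (\Cref{PropertiesOfA}), that
$$(\overline{f}):\overline{(x_1,\ldots,x_{d+1})}^m=\frac{\overline{f}\cdot\big((\overline{x_{d+1}}^m):\overline{(x_1,\ldots,x_{d+1})}^m\big)}{\overline{x_{d+1}}^m}.$$
For ``$\supseteq$'', any element $\overline{f}c/\overline{x_{d+1}}^m$ with $c\,\overline{(x_1,\ldots,x_{d+1})}^m\subseteq(\overline{x_{d+1}}^m)$ multiplies $\overline{(x_1,\ldots,x_{d+1})}^m$ into $(\overline{f})$, hence lies on the left. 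For ``$\subseteq$'', given $z\in\rr(J)$ with $z\,\overline{(x_1,\ldots,x_{d+1})}^m\subseteq(\overline{f})$, I would specialize to the generator $\overline{x_{d+1}}^m$ of $\overline{(x_1,\ldots,x_{d+1})}^m$ to get $z\,\overline{x_{d+1}}^m\in(\overline{f})$; since $\overline{f}\neq0$ (equivalently $f\notin\H$, which is clear on setting all $T_i=0$) one may write $z\,\overline{x_{d+1}}^m=\overline{f}c$ for a unique $c\in\rr(J)$, and then $\overline{f}\,c\,\overline{(x_1,\ldots,x_{d+1})}^m=\overline{x_{d+1}}^m\,z\,\overline{(x_1,\ldots,x_{d+1})}^m\subseteq\overline{f}\,(\overline{x_{d+1}}^m)$; cancelling $\overline{f}$ gives $c\in(\overline{x_{d+1}}^m):\overline{(x_1,\ldots,x_{d+1})}^m$, so $z=\overline{f}c/\overline{x_{d+1}}^m$ lies on the right. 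Combined with the first step this proves $\D=\overline{\A}$.

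I expect the first step to be the crux. The delicate point is to realize the saturation defining $\A$ with the \emph{specific} exponent $m$ provided by \Cref{IndexOfSat}, so that after descending to $\rr(J)$ the power of $\overline{(x_1,\ldots,x_{d+1})}$ occurring in the colon description of $\overline{\A}$ is exactly the one occurring in $\overline{\K}^{(m)}$ through \Cref{colons}; a merely ``$\infty$'' description of the saturation would not make the two fractional ideals visibly coincide. Keeping track of the inclusion $\H\subseteq\A$ and of the interchange of the colon with the quotient by $\H$ is where the care is needed, after which the identification is the elementary domain computation above.
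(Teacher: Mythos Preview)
Your argument is correct and takes a genuinely different route from the paper. The paper first shows $\D\subseteq\overline{\A}$ (using that the image of $\D$ in $\rr(I)$ annihilates a non-zerodivisor), then invokes the Cohen--Macaulayness of $\rr(J)$ to conclude that $\D\cong\overline{\K}^{(m)}$ is unmixed of height one, and finally checks $\D=\overline{\A}$ locally at the height-one associated primes of $\D$, treating the cases $\p=\overline{(x_1,\ldots,x_{d+1})}$ and $\p\neq\overline{(x_1,\ldots,x_{d+1})}$ separately. You instead express both sides as colon ideals in $\rr(J)$---$\overline{\A}=(\overline{f}):\overline{(x_1,\ldots,x_{d+1})}^m$ via the exact saturation index of \Cref{IndexOfSat}, and $\D=\frac{\overline{f}}{\overline{x_{d+1}}^m}\big((\overline{x_{d+1}}^m):\overline{(x_1,\ldots,x_{d+1})}^m\big)$ via \Cref{colons}---and match them by a direct cancellation argument in the domain $\rr(J)$. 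Your approach is more elementary: it avoids the $S_2$/unmixedness discussion and any localization, using only that $\rr(J)$ is a domain. The trade-off is that it hinges on having the \emph{precise} exponent $m$ from \Cref{IndexOfSat}; the paper's local method is insensitive to the saturation index and instead foregrounds the Cohen--Macaulay structure of $\rr(J)$.
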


\begin{proof}
We begin by showing that $\D\subseteq \overline{\A}$. As mentioned, there is an induced surjective map of Rees algebras $\rr(J)\rightarrow \rr(I)$. Since $(x_1,\ldots,x_{d+1})\rr(I)$ is an ideal of positive grade, after a possible change of coordinates we may assume that the image of $x_{d+1}$ in $\rr(I)$ is a non-zerodivisor. With this, the image of $\D$ under this map annihilates an $\rr(I)$-regular element from which it follows that $\D$ is contained in the kernel, which is exactly $\overline{\A}$.

To show equality, we use the Cohen-Macaulayness of $\rr(J)$ and proceed in the same manner as that of \cite[3.10]{BM}. It is well known that in a Cohen-Macaulay ring, a proper ideal is unmixed of height one if and only if it satisfies Serre's condition $S_2$ as a module. As the $S_2$ condition is preserved under isomorphism and $\overline{\K}^{(m)}$ is unmixed of height one, it follows that $\D$ is as well since $\D \cong \overline{\K}^{(m)}$. Now in order to show $\D\subseteq \overline{\A}$ is actually equality, it suffices to show equality locally at the associated primes of $\D$, which are of height one.

If $\p \neq \overline{(x_1,\ldots,x_{d+1})}$ is such a height one prime ideal, then $\overline{\K}_\p = (\overline{x_{d+1}})_\p$. Hence $\D_\p = (f)_\p = \overline{\A}_\p$. Now suppose $\p =\overline{(x_1,\ldots,x_{d+1})}$ and notice that as $\overline{\K} \nsubseteq \overline{(x_1,\ldots,x_{d+1})}$,  we have $\overline{\K}^{(m)}_\p=\rr(J)_\p$. With this and \Cref{colons}, we see that $\overline{(x_1,\ldots,x_{d+1}})_\p = (\overline{x_{d+1}})_\p$. Additionally, observe that $\overline{\A}\nsubseteq \overline{(x_1,\ldots,x_{d+1})}$ hence $\overline{\A}_\p = \rr(J)_\p$ as well. With this, it suffices to show $\D_\p$ is the unit ideal. Notice that
$$\rr(J)_\p = \overline{\A}_\p=(\overline{f})_\p: \overline{(x_1,\ldots,x_{d+1})}^m_\p$$
and so $(\overline{f})_\p= \overline{(x_1,\ldots,x_{d+1})}^m_\p$ as $f\in(x_1,\ldots,x_{d+1})^m$. Thus $(\overline{f})_\p = (\overline{x_{d+1}})^m_\p$, from which we see $\D_\p =\frac{(\overline{x_{d+1}})^m_\p \overline{\K}^{(m)}_\p}{\overline{x_{d+1}}^m} = \rr(J)_\p$.
\end{proof}


We end this section by showing $m$ is actually the index of saturation of $\A$ in \Cref{IndexOfSat}.

\begin{proposition}\label{nissmallest}
With the assumptions of \Cref{setting1.5}, $m$ is the smallest integer such that $\A=\LL:(x_1,\ldots,x_{d+1})^m$.
\end{proposition}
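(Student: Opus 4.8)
The plan is to show that saturation at the power $m-1$ already fails, i.e. $\A \neq \LL:(x_1,\ldots,x_{d+1})^{m-1}$, which combined with \Cref{IndexOfSat} gives that $m$ is the smallest such integer. Equivalently, using \Cref{diffdefideals}, it suffices to exhibit an element of $\A = \LL:(x_1,\ldots,x_{d+1})^\infty$ that is not contained in $\LL:(x_1,\ldots,x_{d+1})^{m-1}$; in other words, an element $g \in \A$ and an index $i < m$ such that $x_i^{m-1} g \notin \LL$. The natural candidate comes from the description $\overline{\A} = \D = \dfrac{\overline{f}\,\overline{\K}^{(m)}}{\overline{x_{d+1}}^m}$ obtained in \Cref{DandA}: a degree count on this fractional ideal should pin down the bottom degree (in the $x$-grading) of elements of $\A$, and that bottom degree should be exactly $m-1$ rather than something smaller.

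The key steps, in order, are as follows. First I would transfer the question to the ring $\rr(J) \cong S[T_1,\ldots,T_n]/\H$ and work with $\overline{\A}$, using that $\LL = (\ell_1,\ldots,\ell_{n-1},f)$ and that, by \Cref{resint} and its proof, $\overline{(\ell_1,\ldots,\ell_{n-1})}$ already saturates to $\overline{\H} = 0$ in $\rr(J)$, so the obstruction to lowering the saturation index is carried entirely by $\overline{f}$ and $\overline{\K}$. Second, I would compute the least $x$-degree occurring in $\D = \overline{f}\,\overline{\K}^{(m)}/\overline{x_{d+1}}^m$: since $\deg_x \overline{f} = m$, $\deg_x \overline{x_{d+1}}^m = m$, and the generators of $\overline{\K}$ coming from $I_d(B')$ have $x$-degree $0$ (they lie in $k[T_1,\ldots,T_n]$, being $d\times d$ minors of the linear-in-$T$ matrix $B'$), the element $\dfrac{\overline{f}\cdot \Delta}{\overline{x_{d+1}}^m}$ for $\Delta$ a nonzero maximal minor of $B'$ lies in $\D = \overline{\A}$ and has $x$-degree $0$; lifting it back to $S[T_1,\ldots,T_n]$ gives an element $g \in \A$ which is a polynomial in the $T_i$ alone (degree $0$ in the $x_i$) but $g \notin \LL$. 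Third, from $g \notin \LL$ and $g$ having $x$-degree $0$, I would deduce $x_i^{j} g \notin \LL$ for every $j < m$ and every $i$: indeed since the generators $\ell_1,\ldots,\ell_{n-1}$ of $\LL$ are linear in the $x$'s and $f$ has $x$-degree $m$, writing any $x_i^j g \in \LL$ and extracting the component of $x$-degree $j < m$ forces $x_i^j g \in (\ell_1,\ldots,\ell_{n-1})$, hence (after dividing out, using that $x_i$ is regular on $S[T]/(\ell_1,\ldots,\ell_{n-1})$ away from $V(x_1,\ldots,x_{d+1})$, or more directly comparing with \Cref{resint}) $g$ itself would be forced into a proper colon giving a contradiction with $g\notin\LL$ combined with the primality established in the proof of \Cref{resint}. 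This shows $g \in \A \setminus \big(\LL:(x_1,\ldots,x_{d+1})^{m-1}\big)$, proving the claim.

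The main obstacle I anticipate is the third step: cleanly ruling out $x_i^j g \in \LL$ for $j < m$. The cheap argument "$g$ has $x$-degree $0$ and $\ell_i$ have $x$-degree $1$ while $f$ has $x$-degree $m$, so one can only produce $x_i^j g$ with $j<m$ using the $\ell_i$'s" is morally right but needs care because $\LL$ is not a graded-in-$x$ complete intersection and the membership could use mixed-degree combinations; the rigorous version is to pass to $S[T]/(\ell_1,\ldots,\ell_{n-1})$, where by the proof of \Cref{resint} the image of $(x_1,\ldots,x_{d+1})$ saturates to the prime defining $\rr(J)$, and to use that $g$ maps to a nonzero element there (this is exactly $g\notin\LL$ modulo the observation that $g\notin(\ell_1,\ldots,\ell_{n-1})$, which follows from $\overline{f}\,\overline{\K}^{(m)}/\overline{x_{d+1}}^m \neq 0$ in $\rr(J)$). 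An alternative, possibly slicker route avoiding this: localize at $\p = \overline{(x_1,\ldots,x_{d+1})}$ in $\rr(J)$ as in the proof of \Cref{DandA}, where $(\overline{f})_\p = (\overline{x_{d+1}})^m_\p$ and $\overline{x_{d+1}}$ generates the maximal ideal of the DVR $\rr(J)_\p$; then $\overline{\A}_\p = \rr(J)_\p$ but $\big(\LL:(x_1,\ldots,x_{d+1})^{m-1}\big)_\p = (\overline{f})_\p : \p^{m-1}_\p = \overline{x_{d+1}}^m \rr(J)_\p : \overline{x_{d+1}}^{m-1}\rr(J)_\p = \overline{x_{d+1}}\,\rr(J)_\p = \p_\p \subsetneq \rr(J)_\p$, so the two ideals differ already after localization, which immediately gives $\A \subsetneq \LL:(x_1,\ldots,x_{d+1})^{m-1}$ is impossible, hence $m$ is minimal. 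I would present this DVR-localization argument as the main proof, as it is short and self-contained given \Cref{DandA} and \Cref{colons}, and relegate the degree-counting discussion to a remark.
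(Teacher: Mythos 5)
Your designated main argument (the DVR localization) is correct and is essentially the paper's own proof: the paper likewise localizes at $\overline{(x_1,\ldots,x_{d+1})}$ in $\rr(J)$, uses that $\overline{\A}$ becomes the unit ideal there while $(\overline{f})$ becomes $\overline{(x_1,\ldots,x_{d+1})}^m$ (both from the proof of \Cref{DandA}), and then extracts the contradiction via \Cref{colons} and equality of ordinary and symbolic powers instead of your valuation count in $\rr(J)_{\overline{(x_1,\ldots,x_{d+1})}}$. One small caveat that does not affect correctness: the image in $\rr(J)$ of $\LL:(x_1,\ldots,x_{d+1})^{m-1}$ is a priori only \emph{contained} in $(\overline{f}):\overline{(x_1,\ldots,x_{d+1})}^{m-1}$ rather than equal to it, but that containment is all your contradiction needs.
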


\begin{proof}
Suppose there is a positive integer $i<m$ such that $\A=\LL:(x_1,\ldots,x_{d+1})^i$. With this, $(x_1,\ldots,x_{d+1})^i \A\subset \LL$ and so modulo $\H$ we have  $\overline{(x_1,\ldots,x_{d+1})}^i \overline{\A}\subset \overline{\LL} = (\overline{f})$. Now after localizing at $\overline{(\x)}=\overline{(x_1,\ldots,x_{d+1})}$  and noting once more that $\overline{\A}_{\overline{(\x)}}$ is the unit ideal, we have the containment $\overline{(x_1,\ldots,x_{d+1})}^i_{\overline{(\x)}} \subset (\overline{f})_{\overline{(\x)}}$. However, $i<m$ and so $f\in (x_1,\ldots,x_{d+1})^i$. Thus this containment is actually equality, $\overline{(x_1,\ldots,x_{d+1})}^i_{\overline{(\x)}} = (\overline{f})_{\overline{(\x)}}$.

Now by \Cref{IndexOfSat}, we know $\A=\LL:(x_1,\ldots,x_{d+1})^m$. Hence by proceeding as before, we find that $\overline{(x_1,\ldots,x_{d+1})}^m_{\overline{(\x)}} \subset (\overline{f})_{\overline{(\x)}}$ and since $f\in (x_1,\ldots,x_{d+1})^m$, this containment is equality as well. From this we obtain
$$ \overline{(x_1,\ldots,x_{d+1})}^i= \overline{(x_1,\ldots,x_{d+1})}^{(i)} = \overline{(x_1,\ldots,x_{d+1})}^{(m)} =\overline{(x_1,\ldots,x_{d+1})}^m$$
which is impossible.
\end{proof}

\section{Modified Jacobian Dual Iterations}\label[section]{iterationssec}

In this section we present two algorithms which produce new generators of $\A$ from preexisting ones. The first algorithm is inspired from the method used in Section 4 of \cite{BM} and in a similar manner we append columns to a particular matrix and take minors at each step. The second algorithm is a refinement of the first and is similar to the process used in \cite{CHW}. This alternative method also considers minors of a particular matrix at each step, but considers only a specified subset of minors. This is computationally more efficient than the first algorithm and will be the preferred method.



\subsection{Matrix Iterations}

We begin by generalizing the notion of the iterated Jacobian dual presented in \cite{BM} and repeat most of the constructions in a more general setting. For now let $R$ be a standard graded Noetherian ring and $B$ an $r\times s$ matrix with entries in $R$ of constant degree along each column. Let $\a= a_1, \ldots, a_r$ be a regular sequence of homogeneous elements of $R$ all of the same degree and let $L$ denote the ideal generated by the entries of the matrix product $\a\cdot B$. We define the matrix iterations of $B$ with respect to $\a$ as follows.

\begin{definition}\label[definition]{defit} Set $B_1 = B$ and $L_1=L$. Next, suppose the following pairs $(B_1, L_1), \ldots, (B_{i-1},L_{i-1} )$ have been constructed inductively such that for all $1\leq j\leq i-1$, $B_j$ is a matrix with $r$ rows consisting of homogeneous entries in $R$ of constant degree along each column and $L_j = (\a \cdot B_j)$. Now to construct the $i^{\text{th}}$ pair $(B_i,L_i)$, let 
$$L_{i-1} + \big(I_r(B_{i-1}) \cap (\a)\big) = L_{i-1}+(u_1,\ldots,u_l)$$
where $u_1,\ldots, u_l$ are homogeneous elements in $R$. There exists an $r\times l$ matrix $C$ consisting of homogeneous entries of constant degree along each column such that 
$$[u_1\ldots u_l] = [a_1\ldots a_r] \cdot C.$$
Now define $B_i$, an $i^{\text{th}}$ \textit{matrix iteration} of $B$, as
$$B_i = [B_{i-1} \,|\, C] $$
where $|$ denotes matrix concatenation. Lastly, set $L_i = (\a\cdot B_i)$.
\end{definition}

By construction, $B_{i-1}$ is a submatrix of $B_i$ and there is a containment of ideals $L_{i-1} \subseteq L_i$. Since the generators $u_1,\ldots,u_l$ above are not necessarily unique in each step, $B_i$ is not uniquely determined for $i>1$ and in general there are multiple candidates of different sizes for each matrix iteration. Despite the non-uniqueness of each matrix $B_i$, the ideal $L_i$ is well-defined which we show inductively. Certainly $L_1=L$ is well-defined as $L=(\a\cdot B)$. If we assume that $L_j$ is a well-defined ideal for all $1\leq j\leq i-1$, suppose that in the $i^{\text{th}}$ iterative step one has
$$L_{i-1}+(v_1,\ldots,v_t)=L_{i-1} + \big(I_r(B_{i-1}) \cap (\a)\big) = L_{i-1}+(u_1,\ldots,u_l)$$
for possibly different generating sets. Let $C$ and $C'$ denote matrices such that $\a\cdot C = [u_1,\ldots,u_l]$ and $\a\cdot C' = [v_1,\ldots,v_t]$. Then certainly $B_i = [B_{i-1}\,|\, C]$ and $B_i' =[B_{i-1}\,|\,C']$ are candidates for an $i^{\text{th}}$ matrix iteration of $B$. Regardless, by the above we have $(\a\cdot B_i) = (\a\cdot B_i')$, showing $L_i$ is indeed a well-defined $R$-ideal.

\begin{proposition}\label{LivsL}
 For all $i$, $L +I_r(B_i) = L_i+I_r(B_i)$.
\end{proposition}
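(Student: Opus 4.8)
The plan is to prove the two inclusions $L + I_r(B_i) \subseteq L_i + I_r(B_i)$ and $L_i + I_r(B_i) \subseteq L + I_r(B_i)$ by induction on $i$, the case $i = 1$ being trivial since $L_1 = L$ and $B_1 = B$. The first inclusion is immediate for every $i$: by construction $L = L_1 \subseteq L_i$, so certainly $L + I_r(B_i) \subseteq L_i + I_r(B_i)$. So the content is in the reverse inclusion, and I would run the induction there.

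For the inductive step, assume $L_{i-1} + I_r(B_{i-1}) \subseteq L + I_r(B_{i-1})$. Recall $B_i = [B_{i-1} \mid C]$, where the columns of $C$ are chosen so that $[u_1 \ldots u_l] = [a_1 \ldots a_r]\cdot C$ and $L_{i-1} + (u_1,\ldots,u_l) = L_{i-1} + (I_r(B_{i-1}) \cap (\a))$. Then $L_i = (\a \cdot B_i) = L_{i-1} + (u_1,\ldots,u_l) = L_{i-1} + (I_r(B_{i-1})\cap(\a))$. The key observation is that $I_r(B_{i-1}) \cap (\a) \subseteq I_r(B_{i-1}) \subseteq I_r(B_i)$, since every maximal minor of $B_{i-1}$ is also a maximal minor of the larger matrix $B_i$ (the extra columns $C$ only produce more $r\times r$ minors). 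Hence
$$L_i = L_{i-1} + \big(I_r(B_{i-1})\cap(\a)\big) \subseteq L_{i-1} + I_r(B_i) \subseteq \big(L + I_r(B_{i-1})\big) + I_r(B_i) = L + I_r(B_i),$$
where the second containment uses the inductive hypothesis and the last equality uses $I_r(B_{i-1}) \subseteq I_r(B_i)$. Therefore $L_i + I_r(B_i) \subseteq L + I_r(B_i)$, completing the induction.

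I do not expect a genuine obstacle here; the statement is essentially a bookkeeping fact, and the only point requiring a moment's care is the monotonicity $I_r(B_{i-1}) \subseteq I_r(B_i)$ under appending columns, together with the fact that $L_i$ differs from $L_{i-1}$ exactly by generators drawn from $I_r(B_{i-1}) \cap (\a)$, which is swallowed by $I_r(B_i)$. One should note that $L_i$ is well-defined independent of the choice of $C$ (established in the discussion preceding the proposition), so the identity is unambiguous. It may also be worth remarking that the same argument gives the chain $L + I_r(B_1) \subseteq L + I_r(B_2) \subseteq \cdots$ stabilizing the statement across all iterations, but that is not needed for the proposition itself.
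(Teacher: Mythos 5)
Your proof is correct and takes essentially the same route as the paper: both hinge on the facts that $L_i$ differs from $L_{i-1}$ by generators drawn from $I_r(B_{i-1})\cap(\a)$ and that $I_r(B_{i-1})\subseteq I_r(B_i)$ since $B_{i-1}$ is a submatrix of $B_i$. The paper simply unrolls the chain $L_i\subseteq L_{i-1}+I_r(B_i)\subseteq\cdots\subseteq L+I_r(B_i)$ explicitly where you package the same step as an induction.
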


\begin{proof}
Clearly $L +I_r(B_i) \subseteq L_i+I_r(B_i)$ so we need only exhibit the reverse containment. Write $L_i = L_{i-1} + (u_1,\ldots,u_l)$ following the notation of \Cref{modit}. Recall that this ideal is well-defined, hence we may take any generating set $u_1,\ldots,u_l$. Now notice that 
$$L_i =L_{i-1} +(u_1,\ldots,u_l)\subseteq L_{i-1}+I_r(B_{i-1})\subseteq L_{i-1}+I_r(B_{i}).$$
Repeating, it follows that $L_i\subseteq L_{i-1} + I_r(B_i)\subseteq  L_{i-2} + I_r(B_i)\subseteq \cdots \subseteq  L + I_r(B_i)$. 
\end{proof}


\begin{theorem}
The ideal $L+I_r(B_i)$ is uniquely determined for all $i$ by $L$ and the regular sequence $a_1,\ldots,a_r$.
\end{theorem}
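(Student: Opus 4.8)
The goal is to show that any two admissible $i$-th matrix iterations $B_i$ and $B_i'$ — allowing every choice of the elements $u_1,\dots,u_l$ and every choice of the matrices $C$ made at each stage, as well as any choice of an initial matrix $B$ with $(\a\cdot B)=L$ — produce the same ideal $L+I_r(B_i)$. The two inputs I would use are: \Cref{LivsL}, which gives $L+I_r(B_i)=L_i+I_r(B_i)$, together with the fact (established just above) that $L_i=(\a\cdot B_i)$ is a well-defined ideal, so that $(\a\cdot B_i)=(\a\cdot B_i')=L_i$ for any two admissible iterations; and the fact that every syzygy of a regular sequence is an $R$-combination of Koszul syzygies.

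The technical heart is an elementary lemma: if $v_1,\dots,v_{r-1}\in R^r$ are columns and $w\in R^r$ satisfies $\a\cdot w=0$, then $\det[v_1\mid\cdots\mid v_{r-1}\mid w]\in(\a\cdot v_1,\dots,\a\cdot v_{r-1})$. Reducing $w$ to a Koszul syzygy $a_se_t-a_te_s$ — this reduction is where the regular-sequence hypothesis is used — I would prove this by Laplace expansion along the two rows $s,t$ in which $w$ is supported: the surviving terms are, up to sign, $(a_sv_{sp}+a_tv_{tp})$ times an $(r-2)\times(r-2)$ minor of the remaining rows, and writing $a_sv_{sp}+a_tv_{tp}=\a\cdot v_p-\sum_{i\ne s,t}a_iv_{ip}$ splits the sum into a part lying in $(\a\cdot v_1,\dots,\a\cdot v_{r-1})$ and a part that reassembles into determinants with a repeated row, hence zero. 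A Cauchy–Binet argument then upgrades this to: for any $r$-rowed matrix $M$ and any matrix $S$ all of whose columns are syzygies of $\a$, one has $I_r([M\mid S])\subseteq I_r(M)+(\a\cdot M)$, since every maximal minor of $[M\mid S]$ using at least one column of $S$ lands, by the lemma, in $(\a\cdot M)$.

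Granting these, the theorem is immediate and uniform in $i$. Given admissible iterations $B_i,B_i'$, the equality $(\a\cdot B_i')=(\a\cdot B_i)=L_i$ expresses each column of $\a\cdot B_i'$ as an $R$-combination of the columns of $\a\cdot B_i$, so $\a\cdot B_i'=\a\cdot(B_iP)$ for some matrix $P$, whence $B_i'=B_iP+S$ with $S$ a matrix whose columns are syzygies of $\a$; thus the columns of $B_i'$ lie in the column span of $[B_i\mid S]$ and $I_r(B_i')\subseteq I_r([B_i\mid S])\subseteq I_r(B_i)+(\a\cdot B_i)=I_r(B_i)+L_i$. Using \Cref{LivsL} on both matrices, $L+I_r(B_i')=L_i+I_r(B_i')\subseteq L_i+I_r(B_i)=L+I_r(B_i)$, and the reverse inclusion holds by symmetry. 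Since the argument invokes only $\a$ and the well-defined ideals $L_i$ — and the case $i=1$, where $L_1=L$, likewise absorbs the choice of the initial matrix $B$ — the resulting ideal depends only on $L$ and $a_1,\dots,a_r$.

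The step I expect to be the main obstacle is the lemma: getting the genuine membership $\det[v_1\mid\cdots\mid v_{r-1}\mid w]\in(\a\cdot v_1,\dots,\a\cdot v_{r-1})$, rather than the weaker statement — all that a naive adjugate computation yields — that this determinant is carried into that ideal upon multiplication by each $a_j$. The Laplace-expansion/repeated-row bookkeeping is exactly what provides the upgrade, and tracking the signs there is the one genuinely fiddly point.
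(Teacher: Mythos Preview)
Your proof is correct and takes essentially the same route as the paper: both reduce, via \Cref{LivsL}, to the fact that $(\a\cdot M)+I_r(M)$ depends only on the ideal $(\a\cdot M)$ when $\a$ is a regular sequence. The paper simply cites this fact as \cite[4.4]{BM} and proceeds by induction on $i$, whereas you supply a self-contained proof via your Koszul-syzygy lemma --- the Laplace-expansion/repeated-row argument you outline is correct and is in fact how \cite[4.4]{BM} itself is established. One small caveat on your ``uniform in $i$'' framing: the paragraph preceding the theorem only shows $L_i$ is independent of the step-$i$ choices for a \emph{fixed} $B_{i-1}$, so to conclude $(\a\cdot B_i)=(\a\cdot B_i')$ when $B_i$ and $B_i'$ arise from different histories you still need the theorem at step $i-1$ (noting $L_i=(L+I_r(B_{i-1}))\cap(\a)$); hence the argument is genuinely inductive after all, as in the paper's presentation.
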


\begin{proof}
We proceed by induction once again. Suppose that $B_1$ and $B_1'$ are two matrices satisfying $(\a\cdot B_1) =L = (\a\cdot B_1')$. As $\a=a_1,\ldots,a_r$ is a regular sequence, by \cite[4.4]{BM} $L+I_r(B_1) = L+I_r(B_1')$ which gives the initial step. Now suppose that $L+I_r(B_j)$ is a well-defined ideal for all $1\leq j\leq i-1$. If $B_i$ and $B_i'$ are two $i^{\text{th}}$ matrix iterations of $B$, then $(\a\cdot B_i) = L_i =(\a\cdot B_i')$. Now by \Cref{LivsL}, $L+ I_r(B_i) = L_i+I_r(B_i)$ and $L+ I_r(B_i') = L_i+I_r(B_i')$. Hence it suffices to show that $L_i+I_r(B_i) =L_i+I_r(B_i')$ and again this follows from \cite[4.4]{BM}.
\end{proof}

Note that as $R$ is a Noetherian ring, this procedure must eventually stabilize as the containments $L+I_r(B_i) \subseteq L+I_r(B_{i+1})$ produce an increasing chain of ideals. Additionally, notice that by repeatedly applying Cramer's rule we have $L+I_r(B_i) \subseteq L:(a_1,\ldots,a_r)^i$. Interesting as this may be, it is unclear if this containment is ever equality in general. Moreover, it is not clear if there is any relation between the stabilization points of $L + I_r(B_i)$ and $L:(a_1,\ldots,a_r)^i$.

\subsection{Ideals of height two in Hypersurface Rings}

As the notation suggests, we apply \Cref{defit} to the ideal $\LL$ and the sequence $x_1,\ldots,x_{d+1}$ to produce the matrix iterations of the modified Jacobian dual $B$. We state this below and explore some properties of the ideals produced. Immediately following, we present a refinement of \Cref{defit} as an alternative method to produce generators of $\A$.

\begin{notation}\label[notation]{notit}
With the assumptions of \Cref{setting1.5} and $B$ a modified Jacobian dual of $\psi$, let $B_i$ denote the $i^{\text{th}}$ matrix iteration of $B$ with respect to the sequence $x_1,\ldots,x_{d+1}$ as in \Cref{defit}.
\end{notation}

As previously mentioned, for all $i$ one has $\LL + I_{d+1}(B_i) \subseteq \LL:(x_1,\ldots,x_{d+1})^i$ and it is an interesting question if this containment is ever equality for some $i$. It is particularly interesting to ask if $\LL+I_{d+1}(B_m) = \LL:(x_1,\ldots,x_{d+1})^m$ as the latter ideal is $\A$ by \Cref{IndexOfSat}. Such a description of $\A$ is preferable as the generators of the ideal of matrix iterations are computed easily.

We now introduce an alternative procedure similar to the method of matrix iterations. Proceeding as in \Cref{defit}, in the creation of the $i^{\text{th}}$ matrix iteration of $B$, instead of considering all of the minors of $I_{d+1}(B_{i-1})\cap (x_1,\ldots,x_{d+1})$ we now consider only a subset of minors. These minors are the determinants of submatrices all of whose columns are columns of $B(\psi)$, except possibly for the last one which is some other column of $B_{i-1}$. In general, this leads to smaller matrices at each step and hence smaller ideals. Before we define this new algorithm, we introduce notation to ease the handling of this subset of minors.

\begin{notation}
Let $R$ be a ring and $M$ an $r\times s$ matrix with entries in $R$. If $M'$ is an $r\times t$ submatrix of $M$ for $t\leq s$, let $I_{r-i,i}(M',M)$ denote the $R$-ideal generated by the $r\times r$ minors of $M$ which are determinants of submatrices consisting of $r-i$ columns of $M'$ and some other $i$ columns of $M$. Notice that this is a subideal of $I_r(M)$ containing $I_r(M')$.
\end{notation}

We now introduce the method of modified Jacobian dual iterations of the pair $(B,\LL)$ with respect to the sequence $x_1,\ldots,x_{d+1}$. In \Cref{LivsL}, it was shown that the ideal of matrix iterations depends only on an updated matrix. In this new process, both an ideal and matrix must be altered at each step. 

\begin{definition}\label[definition]{modit}
 Set $\BB_1= B$ and $\BL_1 =\LL$. Next, suppose the following pairs $(\BB_1,\BL_1), \ldots, (\BB_{i-1},\BL_{i-1})$ have been constructed inductively such that for all $1\leq j\leq i-1$, $\BB_j$ is a matrix with $d+1$ rows of bihomogeneous elements of $S[T_1,\ldots,T_{d+1}]$ of constant bidegree along each column. To construct the $i^{\text{th}}$ pair $(\BB_i,\BL_i)$, let
$$\BL_{i-1} + \big(I_{d,1}(B(\psi),\BB_{i-1})\cap (x_1,\ldots,x_{d+1})\big) = \BL_{i-1} + (u_1,\ldots,u_l)$$
where $u_1,\ldots,u_l$ are bihomogeneous elements of $S[T_1,\ldots,T_n]$. Now there exists a matrix $C$ having bihomogeneous entries of constant bidegree along each column such that
$$[u_1\ldots u_l] = [x_1\ldots x_{d+1}]\cdot C.$$
Now take an $i^{\text{th}}$ \textit{modified Jacobian dual iteration} to be the pair $(\BB_i,\BL_i)$ where $\BB_i= [B(\psi)\,|\,C]$ and $\BL_i = \BL_{i-1} +(u_1,\ldots,u_l)$.
\end{definition}

Notice that the matrix $\BB_i$ is not necessarily unique at each step. However, by proceeding just as before, it can be seen that $\BL_i +I_{d,1}(B(\psi),\BB_i)$ is a well-defined ideal regardless of the choice of matrix $\BB_i$. As we consider a smaller set of minors at each step, we have $\BL_i +I_{d,1}(B(\psi),\BB_i) \subseteq \LL+I_{d+1}(B_i)\subseteq \A$. Eventually we will provide a criteria for when these ideals are equal, but first we must introduce an alternative description of the ideal of modified Jacobian dual iterations.

\begin{theorem}\label[theorem]{equal}
With the assumptions of \Cref{setting1.5} and $\K$ as in \Cref{notation2}, one has $\frac{\overline{f} \,\overline{\K}^m}{\overline{x_{d+1}}^m} = \overline{ \BL_{m}+I_{d,1}(B(\psi),\BB_m)}$ in $\rr(J)$.
\end{theorem}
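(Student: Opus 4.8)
The plan is to identify both sides of the claimed equality with the ideal $\overline{\A}$ of $\rr(J)$, using the structural results already established. By \Cref{DandA} we know $\overline{\A} = \D = \frac{\overline{f}\,\overline{\K}^{(m)}}{\overline{x_{d+1}}^m}$, and by \Cref{colons} the symbolic power $\overline{\K}^{(m)}$ coincides with the ordinary power $\overline{\K}^{\,m}$ provided $\overline{\K}$ is generated up to height-one localization by its powers behaving well; in fact \Cref{colons} gives $\overline{\K}^{(i)} = (\overline{x_{d+1}}^{\,i}):\overline{(\x)}^{(i)}$ and the companion formula, and combined with $\overline{(\x)}^i = \overline{(\x)}^{(i)}$ one deduces $\overline{\K}^{\,m}$ and $\overline{\K}^{(m)}$ agree after inverting $\overline{x_{d+1}}$ (equivalently, at all height-one primes other than $\overline{(\x)}$, where $\overline{\K}$ is locally principal). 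Thus the left-hand side $\frac{\overline{f}\,\overline{\K}^{\,m}}{\overline{x_{d+1}}^m}$ equals $\D = \overline{\A}$ as fractional ideals, since $\overline{f} \in \overline{(x_{d+1})}^{\,m}$ locally at every relevant prime makes the two fractional ideals have the same localizations; this is essentially a rerun of the final paragraph of the proof of \Cref{DandA}.

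The substance of the theorem is therefore the identification of the right-hand side: $\overline{\BL_m + I_{d,1}(B(\psi),\BB_m)} = \overline{\A}$. First I would establish the containment $\subseteq$, which is already noted: by construction each $u_j$ appearing in \Cref{modit} lies in $\A$ (it is a minor from Cramer's rule, multiplied into $(\x)$), and $\BL_m + I_{d,1}(B(\psi),\BB_m) \subseteq \LL + I_{d+1}(B_m) \subseteq \LL:(\x)^m = \A$ by \Cref{IndexOfSat}. For the reverse containment I would work prime by prime at the associated primes of $\overline{\A}$, which are height one in $\rr(J)$ since $\overline{\A} = \D \cong \overline{\K}^{(m)}$ is unmixed of height one (as in \Cref{DandA}). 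At a height-one prime $\p \neq \overline{(\x)}$ one has $\overline{\K}_\p = (\overline{x_{d+1}})_\p$, so $\overline{\A}_\p = (\overline{f})_\p$; since $\overline{f} \in \overline{\LL} \subseteq \overline{\BL_m}$, already $\overline{\BL_m}_\p = \overline{\A}_\p$. At $\p = \overline{(\x)}$, recall from the proof of \Cref{DandA} that $\overline{\A}_\p = \rr(J)_\p$ and $(\overline{x_{d+1}})^m_\p = (\overline{f})_\p$, so it suffices to show $\overline{\BL_m + I_{d,1}(B(\psi),\BB_m)}_\p$ is the unit ideal. This is where the iteration has to do real work: one must show that the columns adjoined through $m$ steps of \Cref{modit} produce, locally at $\overline{(\x)}$, a unit in $\overline{I_{d,1}(B(\psi),\BB_m)}$ — concretely, one tracks that at step $i$ the element witnessing $\overline{(\x)}^{i}_\p = (\overline{x_{d+1}}^{\,i})_\p$ (which exists since $\overline{f}\in(\x)^i$ for $i\le m$ forces equality of these localizations, cf. \Cref{nissmallest}) arises as one of the minors in $I_{d,1}(B(\psi),\BB_{i})$, so that after $m$ steps the localization becomes the full ring.

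The main obstacle is precisely this last point: proving that restricting from all of $I_{d+1}(B_{i-1})\cap(\x)$ to the sub-collection $I_{d,1}(B(\psi),\BB_{i-1})\cap(\x)$ loses nothing after localizing at $\overline{(\x)}$ in $\rr(J)$. I would handle it by an inductive argument that mirrors the one bounding the index of saturation: using \Cref{colons} to write $\overline{\K}^{(i)}$ as a colon ideal, and using the explicit block form $\BB_i = [B(\psi)\,|\,C]$ together with the fact that $B' = $ (top $d$ rows of $B(\psi)$) already has $I_d(B')\neq 0$ over $k[\T]$ (established in the proof of \Cref{colons}), one shows that the generators of $\overline{f}\,\overline{\K}^{(i)}/\overline{x_{d+1}}^i$ are captured by the specified minors. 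A clean way to organize this is to prove by induction on $i$ that $\overline{\BL_i + I_{d,1}(B(\psi),\BB_i)} = \frac{\overline{f}\,\overline{\K}^{(i)}}{\overline{x_{d+1}}^i}$ for all $i\le m$ — the case $i=m$ is the theorem — where the inductive step interprets the new generators $u_1,\ldots,u_l$ of \Cref{modit} as generators of $\overline{f}\,\overline{\K}^{(i)}/\overline{x_{d+1}}^i$ modulo $\overline{\BL_{i-1}}$, via the symbolic-power identities of \Cref{colons} and a degree/localization comparison at the unique relevant height-one prime $\overline{(\x)}$.
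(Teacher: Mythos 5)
There is a genuine gap, and it is located exactly where the theorem has its content. The statement is about the \emph{ordinary} power $\overline{\K}^m$, and the paper proves it by a direct induction ($D_i=\frac{\overline{f}\,\overline{\K}^i}{\overline{x_{d+1}}^i}$ versus $D_i'=\overline{\BL_i+I_{d,1}(B(\psi),\BB_i)}$) carried out element by element with Cramer's rule and \Cref{crlemma}, never invoking symbolic powers. Your plan instead identifies both sides with $\overline{\A}=\frac{\overline{f}\,\overline{\K}^{(m)}}{\overline{x_{d+1}}^m}$. But the asserted first step, $\frac{\overline{f}\,\overline{\K}^m}{\overline{x_{d+1}}^m}=\overline{\A}$, is equivalent (multiply by the nonzerodivisor $\overline{x_{d+1}}^m/\overline{f}$ in the domain $\rr(J)$) to $\overline{\K}^m=\overline{\K}^{(m)}$ -- precisely the statement that is \emph{not} available in the generality of \Cref{setting1.5}: it is the hypothesis of \Cref{powersym}, and the paper only verifies it later, under the extra assumption $n=d+1$, via strong Cohen--Macaulayness of $\overline{\K}$ (\Cref{Kscm}) and \cite[3.4]{SV}. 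Your justification, that the two fractional ideals have the same localizations at height-one primes, only yields containment: $\overline{\K}^m$ may have embedded components of height $>1$, so agreement at height-one primes does not force equality. (The analogous step in \Cref{DandA} works because $\D\cong\overline{\K}^{(m)}$ is known to satisfy $S_2$, hence is unmixed of height one; no such unmixedness is known for $\overline{\K}^m$ or for $\frac{\overline{f}\,\overline{\K}^m}{\overline{x_{d+1}}^m}$.)

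The second half of your plan has the same structural problem plus an unproved core. To upgrade $\overline{\BL_m+I_{d,1}(B(\psi),\BB_m)}\subseteq\overline{\A}$ to equality by localizing, you must check the localizations at the associated primes of the \emph{smaller} ideal (so that the quotient module has no surviving associated prime), and you have no control over the associated primes of the iterated ideal $\overline{\BL_m+I_{d,1}(B(\psi),\BB_m)}$; checking only at the height-one associated primes of $\overline{\A}$, as you propose, does not suffice. Finally, the point you yourself flag as ``the main obstacle'' -- that the restricted family of minors $I_{d,1}(B(\psi),\BB_i)$ captures, locally at $\overline{(\x)}$, everything produced by the saturation -- is exactly the substance of the theorem, and your sketch (an induction asserting $\overline{\BL_i+I_{d,1}(B(\psi),\BB_i)}=\frac{\overline{f}\,\overline{\K}^{(i)}}{\overline{x_{d+1}}^i}$, again with symbolic powers) is both unproved and stronger than the statement. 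The paper's route avoids all of this: in the inductive step one writes a generator $\frac{\overline{f}\,\overline{w_1}\cdots\overline{w_m}}{\overline{x_{d+1}}^m}$ with $w_j\in\K$, applies the induction hypothesis to $\frac{\overline{f}\,\overline{w_1}\cdots\overline{w_{m-1}}}{\overline{x_{d+1}}^{m-1}}$, and uses \Cref{crlemma} to recognize the resulting expression as the determinant of a matrix $[M\,|\,C]$ whose columns are $d$ columns of $B(\psi)$ together with the column $C$ adjoined in \Cref{modit}; the reverse containment is again Cramer's rule. You should redo the argument along these lines, keeping ordinary powers throughout.
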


\begin{proof}
Letting $D_i = \frac{\overline{f} \,\overline{\K}^i}{\overline{x_{d+1}}^i}$ and  $D_i' =\overline{ \BL_{i}+I_{d,1}(B(\psi),\BB_i)}$, it is clear that $D_i \subseteq D_{i+1}$ and $D_i' \subseteq D_{i+1}'$ for any $i$. We show $D_i = D_i'$ for all $1\leq i\leq m$ by induction.


First suppose that $i=1$ and we begin by showing $D_1\subseteq D_1'$. Notice $D_1' =\overline{ \LL + I_{d+1}(B)}$ in this case. Recall from the proof of \Cref{PropertiesOfA}, $\K$ may be written as $\K=(\tilde{\ell}_1,\ldots,\tilde{\ell}_{n-1}) +I_d(B')+(x_{d+1})$ where $[\tilde{\ell}_1 \ldots \tilde{\ell}_{n-1}] = [x_1 \ldots x_d] \cdot B'$. Thus modulo $\H$ we see $\overline{(\tilde{\ell}_1,\ldots,\tilde{\ell}_{n-1})} \subset (\overline{x_{d+1}})$ and so $\frac{\overline{f} \,\overline{(\tilde{\ell}_1,\ldots,\tilde{\ell}_{n-1})}}{\overline{x_{d+1}}}\subseteq (\overline{f}) \subseteq \overline{\LL}$. Now let $w\in I_d(B')$ and for convenience, assume that $w$ is the determinant of the submatrix consisting of the first $d$ columns of $B'$. Let $M$ be the $d+1\times d+1$ submatrix of $B$ consisting of the first $d$ columns of $B(\psi)$ and the last column  of $B$, $\partial f$. Now by Cramer's rule, in $\rr(J)$ we have $\overline{x_{d+1}}\cdot\overline{\det M}  = \overline{f}\cdot \overline{w}$. Thus $\frac{\overline{f} \overline{w}}{\overline{x_{d+1}}}= \overline{\det M} \in \overline{I_{d+1}(B)}$, hence $D_1\subseteq D_1'$.

To show the reverse containment, recall that $x_{d+1} \in \K$ and so $\overline{f} = \frac{\overline{f}\overline{x_{d+1}}}{\overline{x_{d+1}}} \in D_1$ from which it follows that $\overline{\BL_1} =\overline{\LL} \subset D_1$. Let $m \in I_{d,1}(B(\psi),\BB_1)=I_{d+1}(B)$ and note this ideal contains $I_{d+1}(B(\psi))$. If $m\in I_{d+1}(B(\psi))$ then $\overline{m} =0$ in $\rr(J)$ and there is nothing to be shown. Thus we may assume $m \in I_{d+1}(B)\setminus I_{d+1}(B(\psi))$ and for convenience, take $m=\det M$ where $M$ is the submatrix of $B$ consisting the first $d$ columns of $B(\psi)$ and the last column $\partial f$. If $w$ denotes the determinant of the submatrix of $B'$ consisting of the first $d$ columns of $B'$, then by Cramer's rule $\overline{x_{d+1}} \cdot \overline{\det M} = \overline{f}\cdot \overline{w}$. Hence $\overline{m} = \frac{\overline{f}\overline{w}}{\overline{x_{d+1}}} \in D_1$ and so $D_1'\subseteq D_1$ and the initial claim follows.

Now assume $m\geq 2$ and $D_i=D_i'$ for all $1\leq i\leq m-1$ and we first show that $D_m \subseteq D_m'$. Consider $\frac{\overline{f} \overline{w_1}\cdots \overline{w_m}}{\overline{x_{d+1}}^m} \in D_m$ for $w_1,\ldots, w_m \in \K$. Let $\overline{w'} = \frac{\overline{f} \overline{w_1}\cdots \overline{w_{m-1}}}{\overline{x_{d+1}}^{m-1}}$ and note that $\overline{w'} \in D_{m-1} = D_{m-1}'$ by the induction hypothesis. With this, we show that $\frac{\overline{f} \overline{w_1}\cdots \overline{w_m}}{\overline{x_{d+1}}^m } = \frac{\overline{w'}\overline{w_m}}{\overline{x_{d+1}}}$ is contained in $D_m'$. If $w' \in \BL_{m-1}$, then $\overline{w'} \in D'_{m-2}= D_{m-2}$ if $m>2$ and $\overline{w'} \in (\overline{f})$ if $m=2$. In either case, $\frac{\overline{w'}\overline{w_m}}{\overline{x_{d+1}}} \in D_{m-1} = D_{m-1}' \subseteq D_m'$. Now suppose $w' \in I_{d,1}(B(\psi),\BB_{m-1})$ and as before, recall this ideal contains $I_{d+1}(B(\psi))$. If $w'$ is pure in the variables $T_1,\ldots,T_n$, then $w'\in I_{d+1}(B(\psi))$ and so $\overline{w'} =0$ and $\frac{\overline{w'}\overline{w_m}}{\overline{x_{d+1}}} =0$ as $\rr(J)$ is a domain. Thus we may assume that $w' \in I_{d,1}(B(\psi),\BB_{m-1})\cap (x_1,\ldots,x_{d+1}) =(u_1,\ldots,u_l)$ following the notation of \Cref{modit}. It suffices to show that $\frac{\overline{u_p}\overline{w_m}}{\overline{x_{d+1}}} \in D_m'$ for all $1\leq p\leq l$. Without loss of generality, we may assume $w'=u_p$ for some $p$ in the range above. Write 
$$w' = \sum_{k=1}^{d+1}w_k' x_k$$
for $w_k'\in S[T_1,\ldots,T_{d+1}]$. Now if $\overline{w_m}\in (\overline{x_{d+1}}) \subseteq \overline{\K}$, then $\frac{\overline{w'}\overline{w_m}}{\overline{x_{d+1}}} \in D_{m-1}' \subseteq D_m'$. Thus we may assume $w_m \in I_d(B')$ and without loss of generality, we may assume that $w_m$ is the determinant of the submatrix consisting of the first $d$ columns of $B'$. Now let $M$ be the $d+1 \times d$ submatrix consisting of the first $d$ columns of $B(\psi)$. By \Cref{crlemma}, $\overline{x_k}\cdot \overline{w_m} = (-1)^{k-d-1} \overline{x_{d+1}}\cdot\overline{w_{m_k}}$ in $\rr(J)$ where $w_{m_k}=\det M_k$ and $M_k$ is the submatrix of $M$ obtained by deleting the $k^{\text{th}}$ row. With this, we have
$$\frac{\overline{w'}\overline{w_m}}{\overline{x_{d+1}}} =\frac{\sum_{k=1}^{d+1}\overline{w_k'} \overline{x_k}\overline{w_m}}{\overline{x_{d+1}}}= \sum_{k=1}^{d+1} (-1)^{k-d-1}\overline{w_{m_k}} \overline{w_k'}$$
and note this last sum is exactly the determinant of the $d+1\times d+1$ matrix $[M\,|\, C]$ modulo $\H$ where $C$ is the column with entries $w_k'$ for $k=1,\ldots,d+1$. We remark that as it was assumed $w'=u_p$, $C$ is exactly the column corresponding to $u_p$ in the creation of $\BB_m$ following \Cref{modit}. Hence we have $\frac{\overline{w'}\overline{w_m}}{\overline{x_{d+1}}}\in \overline{I_{d+1}(\BB_m)} \subseteq D_m'$, giving the containment $D_m\subseteq D_m'$. 

To show the reverse containment, note that $\BL_{m} \subseteq \BL_{m-1}+ I_{d,1}(B(\psi),\BB_{m-1})$, hence $\overline{\BL_m} \subseteq D_{m-1}' =D_{m-1} \subseteq D_m$ by the induction hypothesis. Let $w\in I_{d,1}(B(\psi),\BB_{m})$ and as before we may assume $w\notin I_{d+1}(B(\psi))$. Thus
without loss of generality, assume that $w$ is the determinant of the submatrix of $\BB_m$ consisting of the the first $d$ columns of $B(\psi)$ and a column of $\BB_m$ corresponding to some $u_p$ where $1\leq p\leq l$ and $I_{d,1}(B(\psi),\BB_{m-1})\cap (x_1,\ldots,x_{d+1}) =(u_1,\ldots,u_l)$. If $w'$ denotes the determinant of the submatrix of $B'$ consisting of the first $d$ columns of $B'$, then by Cramer's rule $\overline{x_{d+1}}\cdot \overline{w} = \overline{w'}\cdot \overline{u_p}$ and so $\overline{w} = \frac{\overline{w'}\cdot \overline{u_p}}{\overline{x_{d+1}}}$. Now $\overline{u_p} \in \overline{I_{d,1}(B(\psi),\BB_{m-1})} \subset D_{m-1}' = D_{m-1}$ by the induction hypothesis, from which it follows that $\overline{w} \in D_m$. Hence $D_m'\subseteq D_m$ and so the two $\rr(J)$-ideals are equal.
\end{proof}

\begin{lemma}[{\cite[4.3]{BM}}]\label[lemma]{crlemma}
Let $R$ be a commutative ring, $[a_1\ldots a_r]$ a $1\times r$ matrix, and $M$ an $r\times r-1$ matrix with entries in $R$. For $1\leq t\leq r$, let $M_t$ be the $r-1\times r-1$ submatrix of $M$ obtained by deleting the $t^{\text{th}}$ row of $M$ and set $m_t=\det M_t$. Then in the ring $R/(\a\cdot M)$
$$\overline{a_t}\cdot\overline{m_k} = (-1)^{t-k} \overline{a_k}\cdot \overline{m_t}$$
for all $1\leq t\leq r$ and $1\leq k\leq r$.
\end{lemma}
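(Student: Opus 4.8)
The plan is to prove this Cramer's rule identity in the sharp form of an explicit expression, from which the stated congruence follows at once. Write $\a=[a_1\ \ldots\ a_r]$ and, for $1\leq j\leq r-1$, set $g_j=\sum_{i=1}^r a_iM_{ij}$, so that $\a\cdot M=[g_1\ \ldots\ g_{r-1}]$ and $R/(\a\cdot M)=R/(g_1,\ldots,g_{r-1})$. The case $t=k$ is trivial, so assume $t\neq k$; then, since $\overline{\phantom{x}}$ denotes reduction modulo $(g_1,\ldots,g_{r-1})$, the assertion is equivalent to the membership
$$a_t m_k-(-1)^{t-k}\,a_k m_t\ \in\ (g_1,\ldots,g_{r-1})\,R .$$
I would prove this, moreover, as a polynomial identity in the $M_{ij}$ and the $a_i$, so that it then holds over an arbitrary commutative ring by specialization.

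The heart of the argument is to exhibit signs $\epsilon_1,\ldots,\epsilon_{r-1}\in\{\pm 1\}$ (depending only on $t,k,r$, not on $i$) for which
$$a_t m_k-(-1)^{t-k}\,a_k m_t\ =\ \sum_{j=1}^{r-1}\epsilon_j\, m^{(t,k)}_j\, g_j ,$$
where $m^{(t,k)}_j$ is the $(r-2)\times(r-2)$ minor of $M$ on the rows $\{1,\ldots,r\}\setminus\{t,k\}$ and the columns $\{1,\ldots,r-1\}\setminus\{j\}$. To verify this I would substitute $g_j=\sum_i a_iM_{ij}$ and interchange the order of summation: the coefficient of $a_i$ on the right-hand side is $\sum_{j=1}^{r-1}\epsilon_j\, m^{(t,k)}_j\, M_{ij}$, which up to a global sign is the Laplace expansion, along the last row, of the determinant of the $(r-1)\times(r-1)$ matrix obtained by listing the $r-2$ rows of $M$ indexed by $\{1,\ldots,r\}\setminus\{t,k\}$ in increasing order and then adjoining the $i$-th row of $M$. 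When $i\notin\{t,k\}$ this matrix has a repeated row, so the coefficient of $a_i$ vanishes; when $i=t$ (resp.\ $i=k$) it differs from $M_k$ (resp.\ $M_t$) by a row permutation, so the coefficient is $\pm m_k$ (resp.\ $\pm m_t$). Hence the right-hand side collapses to $\pm a_t m_k\pm a_k m_t$, and one checks that the two forced signs have ratio exactly $-(-1)^{t-k}$; absorbing the remaining global sign into the $\epsilon_j$ produces the displayed identity, and reducing modulo $(g_1,\ldots,g_{r-1})$ completes the proof.

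The only genuine obstacle is the sign bookkeeping: one must track the Laplace expansion signs (the $(-1)^{\mathrm{row}+\mathrm{col}}$ rule) together with the signs of the permutations that sort the row-index sets, and confirm that they combine with $(-1)^{t-k}$ as claimed; this is routine and is already transparent for $r=2$ and $r=3$. As an alternative requiring more machinery, one can argue by specialization to the universal ring $\Z[\{X_{ij}\},\{A_i\}]$ modulo the $r-1$ generic bilinear forms $\sum_i A_iX_{ij}$, which form a regular sequence whose quotient is an integral domain; over its fraction field the generic matrix $(X_{ij})$ has rank $r-1$, so both the vector $(A_i)_i$ and the vector whose $i$-th entry is $(-1)^i m_i$ lie in the one-dimensional kernel of the transpose of $M$ and are therefore proportional, which gives $A_t m_k=(-1)^{t-k}A_k m_t$ there; specializing yields the general statement. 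I would present the first, self-contained argument as the proof.
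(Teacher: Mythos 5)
The paper does not prove this lemma at all: it is imported verbatim from Boswell--Mukundan \cite[4.3]{BM}, so there is no internal argument to compare yours against. Your proof is correct and self-contained. The key identity is exactly the bordered-determinant computation: letting $P$ be the $(r-1)\times(r-1)$ matrix whose rows are the rows of $M$ indexed by $\{1,\ldots,r\}\setminus\{t,k\}$ together with the row $[g_1\ \ldots\ g_{r-1}]=[a_1\ldots a_r]\cdot M$, Laplace expansion along that last row exhibits $\det P$ as a combination of the $g_j$ with coefficients $\pm m^{(t,k)}_j$, while multilinearity in that row kills every term $a_i$ with $i\notin\{t,k\}$ (repeated row) and leaves $\det P=(-1)^{r-t-1}a_tm_k+(-1)^{r-k}a_km_t$ for $t<k$, whose sign ratio is indeed $-(-1)^{t-k}$; this is precisely the identity you assert, and one can check it instantly for $r=2,3$ as you note. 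Two trivial remarks: the signs $\epsilon_j$ of course depend on $j$ (your parenthetical ``not on $i$'' is what matters), and the alternative specialization argument, while plausible, needs the nontrivial fact that the generic bilinear forms cut out a domain, so presenting the elementary identity as the proof is the right choice.
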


With the description of the ideal of modified Jacobian dual iterations given in \Cref{equal}, we now have the following criterion for when either iterative method yields a complete generating set of $\A$.

\begin{corollary}\label[corollary]{powersym}
If $\overline{\K}^m = \overline{\K}^{(m)}$, then $\A = \BL_m + I_{d,1}(B(\psi),\BB_m)= \LL+I_{d+1}(B_m)$.
\end{corollary}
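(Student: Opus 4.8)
The plan is to deduce this quickly from the two identifications already in hand, \Cref{DandA} and \Cref{equal}, by working inside $\rr(J) \cong S[T_1,\ldots,T_n]/\H$, where $\H = (\ell_1,\ldots,\ell_{n-1})+I_{d+1}(B(\psi))$ as in \Cref{notation2}. First I would record that every ideal in the chain $\BL_m+I_{d,1}(B(\psi),\BB_m) \subseteq \LL+I_{d+1}(B_m) \subseteq \A$ (established just before \Cref{equal}) contains $\H$: indeed $\BL_m \supseteq \BL_1 = \LL \supseteq (\ell_1,\ldots,\ell_{n-1})$, and since $B(\psi)$ is a submatrix of both $\BB_m$ (by \Cref{modit}) and $B_m$ (by construction of the modified Jacobian dual and \Cref{defit}), we get $I_{d+1}(B(\psi)) \subseteq I_{d,1}(B(\psi),\BB_m)$ and $I_{d+1}(B(\psi)) \subseteq I_{d+1}(B_m)$; the containment $\H \subseteq \A$ then follows for free. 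Consequently each of the three ideals is the preimage of its own image in $\rr(J)$, so it is enough to prove that these images coincide.

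Next I would carry out the computation in $\rr(J)$. By \Cref{DandA} the image of $\A$ is $\D = \overline{f}\,\overline{\K}^{(m)}/\overline{x_{d+1}}^m$, and by \Cref{equal} the image of $\BL_m+I_{d,1}(B(\psi),\BB_m)$ is $\overline{f}\,\overline{\K}^m/\overline{x_{d+1}}^m$. These are honest $\rr(J)$-ideals because $f \in (x_1,\ldots,x_{d+1})^m$. The hypothesis $\overline{\K}^m = \overline{\K}^{(m)}$ makes them literally the same ideal, so $\overline{\BL_m+I_{d,1}(B(\psi),\BB_m)} = \overline{\A}$ in $\rr(J)$.

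Finally, sandwiching the image of $\LL+I_{d+1}(B_m)$ between these two equal ideals forces all three images to agree, and pulling back along the surjection $S[T_1,\ldots,T_n]\twoheadrightarrow\rr(J)$ — which is legitimate precisely because all three ideals contain $\H$ — gives $\A = \BL_m+I_{d,1}(B(\psi),\BB_m) = \LL+I_{d+1}(B_m)$. The substance of the argument is entirely supplied by \Cref{DandA} and \Cref{equal}, so I do not expect a genuine obstacle here; the only point requiring any attention is the routine verification that every ideal in the chain contains $\H$, which is what permits transferring the equalities back and forth between $S[T_1,\ldots,T_n]$ and $\rr(J)$.
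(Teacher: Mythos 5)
Your proof is correct and is essentially the paper's own argument: both deduce the claim from \Cref{DandA} and \Cref{equal} via the chain $\overline{\BL_m+I_{d,1}(B(\psi),\BB_m)} \subseteq \overline{\LL+I_{d+1}(B_m)} \subseteq \overline{\A}$ in $\rr(J)$, which the hypothesis $\overline{\K}^m=\overline{\K}^{(m)}$ collapses to equalities. Your explicit verification that all three ideals contain $\H$, which justifies pulling the equalities back from $\rr(J)$ to $S[T_1,\ldots,T_n]$, is a detail the paper leaves implicit.
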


\begin{proof}
Recall that $\A = \LL:(x_1,\ldots,x_{d+1})^m = \frac{\overline{f} \,\overline{\K}^{(m)}}{\overline{x_{d+1}}^m}$ by \Cref{IndexOfSat} and \Cref{DandA}. Now from \Cref{equal} and the containments
$$\frac{\overline{f} \,\overline{\K}^m}{\overline{x_{d+1}}^m} = \overline{\BL_m + I_{d,1}(B(\psi),\BB_m)} \subseteq \overline{ \LL+I_{d+1}(B_m)} \subseteq \overline{\A} = \frac{\overline{f} \,\overline{\K}^{(m)}}{\overline{x_{d+1}}^m}
$$
the claim follows.
\end{proof}

It is interesting to note that the result above shows the indices of stabilization of the two algorithms are equal in this setting and moreover, they both agree with the index of stabilization of $\A$ as a saturation. Now that we a have sufficient condition for when $\A$ is equal to the ideals produced by the two iterative methods, we investigate when this criterion is satisfied.

\section{Ideals of Second Analytic Deviation One}\label[section]{defidealsec}

In this section we consider Rees algebras of ideals with second analytic deviation one. We present a minimal generating set of the defining ideal in terms of the modified Jacobian dual iterations. Additionally, we investigate properties of the Rees algebra such as Cohen-Macaulayness, depth, and regularity.

Recall that the second analytic deviation of $I$ is $\mu(I) -\ell(I)$ and for the duration of this section we assume this is one in addition to the assumptions of \Cref{setting1.5}. By \cite{UV}, $I$ is of maximal analytic spread $\ell(I) =d$, hence this additional assumption is equivalent to $n=d+1$. With this, by \cite[2.6]{HSV1} $J$ is of linear type, hence $\rr(J) \cong \sym(J)$ and this is a complete intersection domain. This can also be seen from \cite{MU} and \Cref{notation2} as $I_{d+1}(B(\psi)) =0$ hence $\H = (\ell_1,\ldots,\ell_d)$ in this setting.

\begin{proposition}\label{Kscm}
With $n=d+1$ and $\K$ as in \Cref{notation2}, $\overline{\K}$ is generically a complete intersection and is a strongly Cohen-Macaulay ideal of $\rr(J)$.
\end{proposition}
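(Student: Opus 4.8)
The plan is to exploit the fact that, in the second analytic deviation one case, $\overline{\K}$ is generated by only two elements, and to read everything off from \Cref{PropertiesOfA} and the colon formulas of \Cref{colons}. When $n=d+1$ the matrix $B'$ of \Cref{notation2} is $d\times d$, so $I_d(B')=(\det B')$, and $I_{d+1}(B(\psi))=0$ forces $\H=(\ell_1,\ldots,\ell_d)$; hence $\overline{\K}=(\overline{\det B'},\overline{x_{d+1}})$ is generated by two elements of $\rr(J)$, with $\overline{x_{d+1}}$ a nonzerodivisor because $\rr(J)$ is a domain and $x_{d+1}\notin\H$. Consequently all Koszul homology of $\overline{\K}$ is concentrated in homological degrees $0,1,2$, and the whole proposition reduces to a statement about these three modules.

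For the "generically a complete intersection" assertion I would first note that $\overline{\K}$ is Cohen-Macaulay of height one by \Cref{PropertiesOfA}, hence unmixed, so its associated primes are exactly its minimal primes and all have height one; in particular $\overline{\K}=\overline{\K}^{(1)}$. None of these minimal primes equals $\overline{(x_1,\ldots,x_{d+1})}$: the element $\overline{\det B'}$ lies in $k[T_1,\ldots,T_n]\subseteq\rr(J)$, is nonzero since $I_d(B')\neq 0$ (as in the proof of \Cref{colons}), and the composite $k[T_1,\ldots,T_n]\hookrightarrow\rr(J)\twoheadrightarrow\rr(J)/\overline{(x_1,\ldots,x_{d+1})}$ is the identity, so $\overline{\det B'}\notin\overline{(x_1,\ldots,x_{d+1})}$ and therefore $\overline{\K}\not\subseteq\overline{(x_1,\ldots,x_{d+1})}$. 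Now fix a minimal prime $\p$ of $\overline{\K}$. Since $\p$ and $\overline{(x_1,\ldots,x_{d+1})}$ are distinct height-one primes, $\overline{(x_1,\ldots,x_{d+1})}\not\subseteq\p$; localizing the identity $\overline{(x_1,\ldots,x_{d+1})}=(\overline{x_{d+1}}):_{\rr(J)}\overline{\K}$ from \Cref{colons} (the case $i=1$) at $\p$ yields $\rr(J)_\p=(\overline{x_{d+1}})_\p:_{\rr(J)_\p}\overline{\K}_\p$, forcing $\overline{\K}_\p=(\overline{x_{d+1}})_\p$, a complete intersection.

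For strong Cohen-Macaulayness I would compute the Koszul homology of $\overline{\K}$ on the generators $\overline{x_{d+1}},\overline{\det B'}$. Since $\overline{x_{d+1}}$ is a nonzerodivisor, $H_2=(0:\overline{x_{d+1}})\cap(0:\overline{\det B'})=0$, and $H_0=\rr(J)/\overline{\K}$ is Cohen-Macaulay of dimension $d+1$ by \Cref{PropertiesOfA}; so only $H_1$ needs attention. The standard computation of $H_1$ for a two-generated ideal with one generator a nonzerodivisor gives $H_1\cong\big((\overline{x_{d+1}}):_{\rr(J)}\overline{\det B'}\big)/(\overline{x_{d+1}})$, and since $(\overline{x_{d+1}}):\overline{\det B'}=(\overline{x_{d+1}}):\overline{\K}=\overline{(x_1,\ldots,x_{d+1})}$ by \Cref{colons}, this gives $H_1\cong\overline{(x_1,\ldots,x_{d+1})}/(\overline{x_{d+1}})$. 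Applying the depth lemma to the short exact sequence
$$0\longrightarrow\overline{(x_1,\ldots,x_{d+1})}/(\overline{x_{d+1}})\longrightarrow\rr(J)/(\overline{x_{d+1}})\longrightarrow\ff(J)\longrightarrow 0,$$
whose middle term is Cohen-Macaulay of dimension $d+1$ ($\overline{x_{d+1}}$ being a nonzerodivisor on the $(d+2)$-dimensional Cohen-Macaulay ring $\rr(J)$) and whose right term is the Cohen-Macaulay fiber ring $\ff(J)$ of dimension $\ell(J)=d+1$ (\Cref{PropertiesOfA}), one obtains $\dep H_1\geq d+1$; as $H_1$ is a nonzero $\rr(J)/\overline{\K}$-module (nonzero since otherwise $\overline{x_{d+1}},\overline{\det B'}$ would be a regular sequence, contradicting $\hgt\overline{\K}=1$) of dimension at most $d+1$, it is Cohen-Macaulay of dimension $d+1=\dim\rr(J)/\overline{\K}$. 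Thus every Koszul homology module of $\overline{\K}$ is Cohen-Macaulay of dimension $\dim\rr(J)/\overline{\K}$ or zero, which is precisely the assertion that $\overline{\K}$ is strongly Cohen-Macaulay.

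The only place requiring real care is the treatment of $H_1$: one must invoke the correct colon identity from \Cref{colons} and check that $\overline{(x_1,\ldots,x_{d+1})}/(\overline{x_{d+1}})$ is genuinely nonzero, so that the depth estimate certifies Cohen-Macaulayness rather than being vacuous; beyond that the argument is a direct consequence of \Cref{PropertiesOfA} and \Cref{colons}. One could alternatively phrase the strong Cohen-Macaulayness through linkage --- by \Cref{colons} the ideal $\overline{\K}$ is directly linked to the Cohen-Macaulay ideal $\overline{(x_1,\ldots,x_{d+1})}$ via the complete intersection $(\overline{x_{d+1}})$, and $\rr(J)$ is Gorenstein since $J$ is of linear type, so one may appeal to the stability of strong Cohen-Macaulayness under linkage over a Gorenstein ring --- but this route first requires establishing that $\overline{(x_1,\ldots,x_{d+1})}$ is itself strongly Cohen-Macaulay, and the self-contained Koszul computation above avoids that detour.
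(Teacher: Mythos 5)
Your argument is correct, and it splits naturally into a half that matches the paper and a half that does not. For the assertion that $\overline{\K}$ is generically a complete intersection you follow essentially the paper's own route: both arguments use $\overline{\K}\not\subseteq\overline{(x_1,\ldots,x_{d+1})}$ together with unmixedness of the Cohen-Macaulay ideal $\overline{\K}$, and then localize the colon identity of \Cref{colons} at a height-one prime $\p\supseteq\overline{\K}$ to get $(\overline{x_{d+1}})_\p:\overline{\K}_\p=\rr(J)_\p$, hence $\overline{\K}_\p=(\overline{x_{d+1}})_\p$. Where you genuinely diverge is the strong Cohen-Macaulayness. The paper notes that, since $n=d+1$, $\overline{\K}=(\overline{\det B'},\overline{x_{d+1}})$ is a height-one Cohen-Macaulay almost complete intersection and then simply cites \cite[2.2]{Huneke2}. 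You instead prove this from scratch with the two-generator Koszul complex: $H_2=0$ because $\overline{x_{d+1}}$ is a nonzerodivisor in the domain $\rr(J)$, $H_0=\rr(J)/\overline{\K}$ is Cohen-Macaulay by \Cref{PropertiesOfA}, and $H_1\cong\big((\overline{x_{d+1}}):\overline{\K}\big)/(\overline{x_{d+1}})=\m\rr(J)/(\overline{x_{d+1}})$ by \Cref{colons} (using unmixedness to identify $\overline{\K}$ with $\overline{\K}^{(1)}$), after which the depth lemma applied to $0\to\m\rr(J)/(\overline{x_{d+1}})\to\rr(J)/(\overline{x_{d+1}})\to\ff(J)\to 0$, with both outer computations resting on \Cref{PropertiesOfA}, gives $\dep H_1\geq d+1=\dim\rr(J)/\overline{\K}$, so $H_1$ is Cohen-Macaulay. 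This is a legitimate, self-contained substitute for the citation; in fact the identity $\m\rr(J)=(\overline{x_{d+1}}):\overline{\K}$ you exploit is exactly the one the paper uses later in \Cref{depthsec} to set up its exact sequences, so your computation sits comfortably in the paper's framework. The paper's route buys brevity and applies verbatim to any Cohen-Macaulay almost complete intersection; yours buys independence from \cite{Huneke2} and an explicit description of the only nontrivial Koszul homology module, at the cost of the extra (correctly handled) checks that $H_1$ is nonzero and of dimension at most $d+1$.
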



\begin{proof}
 Let $\p$ be a prime ideal of height one containing $\overline{\K}$. The proof of \Cref{colons} shows $\overline{(x_1,\ldots,x_{d+1})}$ is not an associated prime of $\overline{\K}$,  hence $(\overline{x_{d+1}})_\p : \overline{\K}_\p = \overline{(x_1,\ldots,x_{d+1})}_\p = \rr(J)_\p$. With this we see $\overline{\K}_\p \subseteq (\overline{x_{d+1}})_\p$, hence $\overline{\K}_\p= (\overline{x_{d+1}})_\p$ showing $\overline{\K}$ is generically a complete intersection.

Now notice that as $n=d+1$, $\overline{\K} = (\overline{w},\overline{x_{d+1}})$ where $w= \det B'$. Recall that $\overline{\K}$ is an ideal of height one, hence it is an almost complete intersection ideal. Additionally by \Cref{PropertiesOfA}, $\overline{\K}$ is a Cohen-Macaulay ideal, hence by \cite[2.2]{Huneke2} $\overline{\K}$ is a strongly Cohen-Macaulay $\rr(J)$-ideal. 
\end{proof}

\begin{proposition}\label{hgtIdB}
With the assumptions of \Cref{setting1.5} and $n=d+1$, one has $\hgt I_{d}(B(\psi)) =2$ in the ring $S[T_1,\ldots, T_{d+1}]$.
\end{proposition}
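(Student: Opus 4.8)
My plan is to prove the two inequalities $\hgt I_d(B(\psi))\le 2$ and $\hgt I_d(B(\psi))\ge 2$ separately. The first is routine: $B(\psi)$ is a $(d+1)\times d$ matrix, $I_d(B(\psi))$ is its ideal of maximal minors, and this ideal is proper because every entry of $B(\psi)$ is a linear form in $T_1,\dots,T_{d+1}$; the classical upper bound on the height of a determinantal ideal then gives $\hgt I_d(B(\psi))\le\bigl((d+1)-d+1\bigr)\bigl(d-d+1\bigr)=2$. Since $B(\psi)$ has all its entries in $k[T_1,\dots,T_{d+1}]$ and $k[T_1,\dots,T_{d+1}]\subseteq S[T_1,\dots,T_{d+1}]$ is faithfully flat, this height is the same whether computed in the smaller or the larger ring, so I would work in $k[T_1,\dots,T_{d+1}]$; set $K=\operatorname{Frac}\bigl(k[T_1,\dots,T_{d+1}]\bigr)$.

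For the lower bound I would first pin down the generic rank of $B(\psi)$. Since $n=d+1$, the ideal $J$ is of linear type, so by \Cref{PropertiesOfA} (and the remark opening \Cref{defidealsec} that $I_{d+1}(B(\psi))=0$) the ring $\rr(J)=\sym(J)=S[T_1,\dots,T_{d+1}]/(\ell_1,\dots,\ell_d)$ is a domain of dimension $d+2$. Each $\ell_j$ is bihomogeneous of bidegree $(1,1)$ in the $x$'s and the $T$'s, so $(\ell_1,\dots,\ell_d)$ contains no nonzero polynomial in the $T$'s alone; hence $k[T_1,\dots,T_{d+1}]\hookrightarrow\rr(J)$, and $\rr(J)=k[T_1,\dots,T_{d+1}][x_1,\dots,x_{d+1}]/(\ell_1,\dots,\ell_d)$ is a finitely generated domain over $k[T_1,\dots,T_{d+1}]$. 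The dimension formula therefore gives $\dim\bigl(K\otimes_{k[T]}\rr(J)\bigr)=\dim\rr(J)-\dim k[T_1,\dots,T_{d+1}]=1$. On the other hand $K\otimes_{k[T]}\rr(J)=K[x_1,\dots,x_{d+1}]/(\ell_1,\dots,\ell_d)$, a polynomial ring over $K$ modulo the $K$-linear forms $\ell_1,\dots,\ell_d$ whose coefficient matrix is $B(\psi)$, so its dimension is $(d+1)-\rk_K B(\psi)$. Hence $\rk_K B(\psi)=d$; in particular $I_d(B(\psi))\ne 0$, so $\hgt I_d(B(\psi))\ge 1$.

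It remains to rule out $\hgt I_d(B(\psi))=1$. Suppose some height-one prime of $k[T_1,\dots,T_{d+1}]$ contains $I_d(B(\psi))$; being a UFD, that prime is $(g)$ for an irreducible $g$, and every maximal minor of $B(\psi)$ is then divisible by $g$. Put $A=k[T_1,\dots,T_{d+1}]/(g)$, a domain of dimension $d$ with fraction field $\kappa$, and $C=\rr(J)/g\rr(J)$. Since $\rr(J)$ is a domain, $g$ is a nonzerodivisor on it, and $g\rr(J)$ is proper—otherwise $g$ would become a unit in $\rr(J)/(x_1,\dots,x_{d+1})\rr(J)=k[T_1,\dots,T_{d+1}]$—so $\dim C\le\dim\rr(J)-1=d+1$. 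But $C$ is the quotient of the polynomial ring $A[x_1,\dots,x_{d+1}]$ by the images of $\ell_1,\dots,\ell_d$, which—read as linear forms in $x_1,\dots,x_{d+1}$ over $A$—have coefficient matrix the image of $B(\psi)$ in $A$; all its $d\times d$ minors vanish (they lie in $I_d(B(\psi))\subseteq(g)$), so over $\kappa$ this matrix has rank at most $d-1$, and therefore the generic fibre $C\otimes_A\kappa$ is a polynomial ring over $\kappa$ modulo linear forms of rank $\le d-1$, hence of dimension at least $(d+1)-(d-1)=2$. Since $A$ is finitely generated over the field $k$, the dimension formula gives $\dim C\ge\dim A+\dim\bigl(C\otimes_A\kappa\bigr)\ge d+2$, contradicting $\dim C\le d+1$. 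Thus no height-one prime contains $I_d(B(\psi))$, and together with $I_d(B(\psi))\ne 0$ this yields $\hgt I_d(B(\psi))=2$.

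The crux is the lower bound, and within it the exclusion of height one: the contradiction comes from two competing estimates for $\dim\rr(J)/g\rr(J)$—an upper bound of $d+1$ because $g$ is a nonzerodivisor on the $(d+2)$-dimensional domain $\rr(J)$, and a lower bound of $d+2$ because the maximal minors of $B(\psi)$ all collapse modulo $g$, so the fibre of $\operatorname{Spec}\rr(J)/g\rr(J)\to\operatorname{Spec}A$ over the generic point of $\operatorname{Spec}A$ acquires an extra dimension. The only point requiring care is the inequality $\dim C\ge\dim A+\dim(C\otimes_A\kappa)$, which is legitimate because $A$ is a finitely generated algebra over the field $k$; everything else is standard.
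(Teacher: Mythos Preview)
Your argument is correct. Both inequalities are handled soundly: the Eagon--Northcott bound gives $\hgt I_d(B(\psi))\le 2$, and your fiber-dimension argument for the lower bound is valid. The only step worth a second glance is the inequality $\dim C\ge\dim A+\dim(C\otimes_A\kappa)$; this holds because you first verified $A\hookrightarrow C$ (via the $x$-grading on $\rr(J)$), so the generic fiber is nonempty, and then any minimal prime $\mathfrak q$ of $C\otimes_A\kappa$ of maximal dimension contracts to a prime $\mathfrak p$ of $C$ with $\mathfrak p\cap A=0$, whence $\dim C\ge\dim C/\mathfrak p=\operatorname{tr.deg}_k\operatorname{Frac}(C/\mathfrak p)=\dim A+\dim(C\otimes_A\kappa)$ by additivity of transcendence degree.

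Your route differs from the paper's. The paper exploits the bilinear nature of the $\ell_i$ to identify $\sym_S(J)\cong\sym_{k[\T]}(E)$ with $E=\coker B(\psi)$, reads off $\rk E=1$ from $\dim\sym(J)=d+2$ via the Huneke--Rossi formula, and then invokes \cite[6.6, 6.8]{HSV2}: a domain symmetric algebra forces the module to satisfy the $F_1$/$G_\infty$ Fitting-height conditions, yielding $\grd I_d(B(\psi))\ge 2$ directly. Your computation of $\rk_K B(\psi)=d$ is exactly the dual statement $\rk E=1$, and your height-one exclusion is, in effect, a bare-hands proof of the relevant special case of the Herzog--Simis--Vasconcelos result---showing that a drop in the rank of $B(\psi)$ over a codimension-one locus would force $\sym(J)$ to have an irreducible component of excessive dimension, contradicting its being a domain. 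The paper's proof is shorter by outsourcing this to the literature; yours is self-contained and makes the mechanism (competing dimension estimates on $\rr(J)/g\rr(J)$) fully transparent.
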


\begin{proof}
As mentioned, the additional assumption that $n=d+1$ implies $\rr(J)\cong \sym(J)$ and this is a complete intersection domain of dimension $d+2$. Furthermore, since $J$ is linearly presented, $B(\psi)$ consists of linear entries in $k[T_1,\ldots,T_{d+1}]$. Thus there is an isomorphism of symmetric algebras $\sym(J) \cong \sym_{k[\T]}(E)$  where $E=\coker B(\psi)$. Since $\sym(J)$ is a domain, by \cite[6.8]{HSV2} we have
$$d+2 = \dim \sym(J) = \dim {\rm sym}_{k[\T]}(E) = \rk E + \dim k[T_1,\ldots,T_{d+1}].$$
Hence $\rk E =1$ and so by \cite[6.8 and 6.6]{HSV2}, $\grd I_d(B(\psi)) \geq 2$. Now as $S[T_1,\ldots,T_{d+1}]$ is Cohen-Macaulay and this is the largest height possible, the claim follows.
\end{proof}

We now present the main result of the section and temporarily return to the setting of \Cref{notation1} for its statement. We remark that as $n=d+1$, the modified Jacobian dual $B$ is a square matrix. Hence, following the construction laid out in \Cref{modit}, each modified Jacobian dual iteration $\BB_i$ is a square matrix as well. 

\begin{theorem}\label{mainresult}
Let $S=k[x_1,\ldots,x_{d+1}]$, $f\in S$ a homogeneous polynomial of degree $m$, and $R=S/(f)$. Let $I$ be a perfect $R$-ideal of grade 2 with linear presentation matrix $\varphi$. If $I$ satisfies $G_d$, $I_1(\varphi) = \overline{(x_1,\ldots,x_{d+1})}$, and $\mu(I) = d+1$ then the defining ideal $\J$ of $\rr(I)$ satisfies 
$$\J =\overline{\LL+I_{d+1}(B_m)} = \overline{\BL_m + (\det \BB_m)}$$
where $\overline{\,\cdot\,}$ denotes images modulo $(f)$. Additionally,  $\ff(I) \cong k[T_1,\ldots,T_{d+1}]/(\mathfrak{f})$ where $\deg \mathfrak{f} = md$.
\end{theorem}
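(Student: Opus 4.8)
The two displayed equalities will follow by assembling earlier results once we establish the single fact that $\overline{\K}^m = \overline{\K}^{(m)}$ in $\rr(J)$. Granting this, \Cref{powersym} gives $\A = \LL + I_{d+1}(B_m) = \BL_m + I_{d,1}(B(\psi),\BB_m)$, and since \Cref{Jasat} together with \Cref{IndexOfSat} identifies $\J$ with the image $\overline{\A}$ of $\A$ modulo $(f)$ (note $f \in \LL \subseteq \A$, so in fact $\rr(I) \cong S[T_1,\ldots,T_{d+1}]/\A$), we obtain $\J = \overline{\LL + I_{d+1}(B_m)} = \overline{\BL_m + I_{d,1}(B(\psi),\BB_m)}$. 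It remains to recognize $I_{d,1}(B(\psi),\BB_m)$ as the principal ideal $(\det\BB_m)$. This is precisely where $n=d+1$ enters: then $\psi$ is $(d+1)\times d$, so $B(\psi)$ has exactly $d$ columns and $B = [B(\psi)\,|\,\partial f]$ is a square $(d+1)\times(d+1)$ matrix. Consequently, as noted in the remark preceding the statement, every iterate $\BB_i$ of \Cref{modit} is again square: at each step $I_{d,1}(B(\psi),\BB_{i-1})$ is the principal ideal $(\det\BB_{i-1})$, which for $i \leq m$ has positive $x$-degree (see the bookkeeping below) and hence lies in $(x_1,\ldots,x_{d+1})$, so a single new column $C$ suffices. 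Thus $\BB_m$ is square, $I_{d,1}(B(\psi),\BB_m) = (\det\BB_m)$, and $\J = \overline{\BL_m + (\det\BB_m)}$.

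\textbf{The special fiber.} For the assertion on $\ff(I)$, I would bigrade $S[T_1,\ldots,T_{d+1}]$ by $x$-degree and $T$-degree and track the bidegree of $\det\BB_i$ through the iteration. Since $B(\psi)$ has entries of bidegree $(0,1)$ and $\partial f$ is a column of entries of bidegree $(m-1,0)$, $\det\BB_1 = \det B$ has bidegree $(m-1,\,d)$. Inductively, if $\det\BB_{i-1}$ has bidegree $(m-i+1,\,(i-1)d)$ with $i-1 < m$, then $m-i+1 \geq 1$, so $\det\BB_{i-1} \in (x_1,\ldots,x_{d+1})$ and the column $C$ with $[x_1\ldots x_{d+1}]\cdot C = \det\BB_{i-1}$ consists of entries of bidegree $(m-i,\,(i-1)d)$; expanding $\det\BB_i = \det[B(\psi)\,|\,C]$ along this last column shows $\det\BB_i$ has bidegree $(m-i,\,id)$. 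At $i=m$ we conclude $\det\BB_m$ involves only the $T_i$ and is homogeneous of $T$-degree $md$. Now $\ff(I) = \rr(I)\otimes_R k \cong S[T_1,\ldots,T_{d+1}]/\big(\A + (x_1,\ldots,x_{d+1})\big)$, and every generator of $\BL_m$ has positive $x$-degree (the $\ell_i$ and $f$ certainly do, and the generators $u_j$ are chosen inside $(x_1,\ldots,x_{d+1})$), so $\BL_m$ vanishes modulo $(x_1,\ldots,x_{d+1})$. Hence $\ff(I) \cong k[T_1,\ldots,T_{d+1}]/(\mathfrak{f})$ with $\mathfrak{f}$ the image of $\det\BB_m$, i.e.\ $\mathfrak{f} = \det\BB_m$, of degree $md$; and $\mathfrak{f}\neq 0$ since $\dim\ff(I) = \ell(I) = d < d+1$ by \cite{UV}.

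\textbf{The main obstacle.} The crux is the equality $\overline{\K}^m = \overline{\K}^{(m)}$. By \Cref{Kscm}, when $n = d+1$ the ideal $\overline{\K}$ is strongly Cohen--Macaulay and generically a complete intersection; moreover $\mu(\overline{\K}) = 2 = \hgt\overline{\K} + 1$, so $\overline{\K}$ satisfies $G_\infty$ (at a height-one prime it is locally a complete intersection, while at primes of height at least $2$ the bound $\mu \leq 2$ is automatic). Since $\rr(J)$ is Cohen--Macaulay --- in fact a complete intersection domain in this case --- the theory of strongly Cohen--Macaulay ideals satisfying $G_\infty$ (see \cite{Huneke2}, \cite{VasconcelosBook}) shows that $\overline{\K}$ is of linear type and that $\rr(\overline{\K})$ and $\gr_{\overline{\K}}(\rr(J))$ are Cohen--Macaulay; consequently $\rr(J)/\overline{\K}^{\,j}$ is Cohen--Macaulay, hence unmixed of height one, for every $j \geq 1$, so $\overline{\K}^{\,j}$ has no embedded primes and equals $\overline{\K}^{(j)}$. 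Taking $j = m$ completes the argument. I expect this translation of ``strongly Cohen--Macaulay and $G_\infty$'' into ``all powers of $\overline{\K}$ are unmixed'' to be the delicate point. A more self-contained route would use the direct linkage $\overline{\K} = (\overline{x_{d+1}}) :_{\rr(J)} \overline{(x_1,\ldots,x_{d+1})}$ together with \Cref{colons}, which already gives $\overline{(x_1,\ldots,x_{d+1})}^{(j)} = \overline{(x_1,\ldots,x_{d+1})}^{\,j}$ and $\overline{\K}^{(j)} = (\overline{x_{d+1}}^{\,j}) :_{\rr(J)} \overline{(x_1,\ldots,x_{d+1})}^{\,j}$; one then needs to show this colon ideal is contained in $\overline{\K}^{\,j}$, say by a double-linkage computation in the Gorenstein ring $\rr(J)$.
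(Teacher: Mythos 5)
The framing of your proposal matches the paper up to the crux: the reduction via \Cref{Jasat}, \Cref{IndexOfSat} and \Cref{powersym}, the observation that for $n=d+1$ all iterates are square so that $I_{d,1}(B(\psi),\BB_m)=(\det\BB_m)$, and the bidegree bookkeeping for the special fiber are all in line with what the paper does. The gap is in the one step you yourself flag as the main obstacle, and it is a genuine one. You argue: $\overline{\K}$ is strongly Cohen--Macaulay, generically a complete intersection, and satisfies $G_\infty$; hence it is of linear type with $\rr(\overline{\K})$ and $\gr_{\overline{\K}}(\rr(J))$ Cohen--Macaulay; hence $\rr(J)/\overline{\K}^{\,j}$ is Cohen--Macaulay, so $\overline{\K}^{\,j}$ is unmixed and equals $\overline{\K}^{(j)}$. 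The last inference is false: Cohen--Macaulayness of the associated graded ring does not pass to the quotients by the powers, and more to the point, ``strongly CM $+$ generically CI $+$ $G_\infty$'' does not imply that ordinary and symbolic powers agree. The defining prime $P$ of the monomial curve $(t^3,t^4,t^5)$ in $k[[x,y,z]]$ is a codimension~2 perfect (hence strongly CM) prime, generically a complete intersection, satisfying $G_\infty$, of linear type with Cohen--Macaulay Rees and associated graded rings --- and yet $P^{(2)}\neq P^2$, with $P^2$ having an embedded prime at the maximal ideal. So your chain of implications cannot establish $\overline{\K}^m=\overline{\K}^{(m)}$; the reason it fails in that example is precisely that $\mu(P_\m)=3>\hgt\m-1$, i.e.\ the local generation bound needed for equality of powers and symbolic powers is strictly stronger than $G_\infty$.

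That stronger bound is exactly what the paper's proof supplies and what your proposal omits. One must show $\mu(\overline{\K}_\p)\leq \hgt\p-1=1$ for every prime $\p\in V(\overline{\K})$ with $\hgt\p=2$ (heights $\geq 3$ are harmless since $\mu(\overline{\K})=2$); together with \Cref{Kscm} this puts one in the hypotheses of \cite[3.4]{SV}, which is the result actually yielding $\overline{\K}^m=\overline{\K}^{(m)}$. The verification is where \Cref{hgtIdB} and \Cref{crlemma} earn their keep: if $\p\nsupseteq\overline{(x_1,\ldots,x_{d+1})}$ then $\overline{\K}_\p=(\overline{x_{d+1}})_\p$; if $\p\supseteq\overline{(x_1,\ldots,x_{d+1})}$, then since $\hgt I_d(B(\psi))=2$ the image of $(x_1,\ldots,x_{d+1})+I_d(B(\psi))$ in $\rr(J)$ has height $3$, so $\p\nsupseteq\overline{I_d(B(\psi))}$; choosing a maximal minor $\overline{w_j}\notin\p$ and using \Cref{crlemma} gives $\overline{x_j}\cdot\overline{w}=\pm\,\overline{x_{d+1}}\cdot\overline{w_j}$, whence $\overline{x_{d+1}}\in(\overline{w})_\p$ and $\overline{\K}_\p=(\overline{w})_\p$ is principal. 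Your alternative suggestion (a double-linkage computation from $\overline{\K}=(\overline{x_{d+1}}):\overline{(x_1,\ldots,x_{d+1})}$ and \Cref{colons}) is not carried out and, as stated, does not obviously close the gap either; some argument of the above local type is needed.
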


We remark that the assumption $I_1(\varphi) = \overline{(x_1,\ldots,x_{d+1})}$ ensures that $J$ satisfies $G_{d+1}$. This condition implies $I_1(\psi) = (x_1,\ldots,x_{d+1})$ when $m\geq 2$ and if $m=1$, then $\psi$ is not unique, but can be chosen to have this ideal of entries. Now as $\mu(J)=d+1$, this is exactly the $d^{\text{th}}$ Fitting ideal of $J$ and the discussion prior to \Cref{diffdefideals} confirms $J$ satisfies $G_{d+1}$. Thus the conditions of \Cref{setting1.5} are met.


\begin{proof}
We return to the setting of \Cref{notation2} and let $\overline{\,\cdot\,}$ denote images in $\rr(J)$ once again. By \Cref{powersym}, it suffices to show $\overline{\K}^m = \overline{\K}^{(m)}$ and we begin by showing that
$$\mu(\overline{\K}_\p) \leq \hgt \p -1 =1$$
for any prime ideal $\p\in V(\overline{\K})$ with $\hgt \p =2$. Let $\p$ be such a prime ideal of $\rr(J)$ and first note that if $\p \nsupseteq \overline{(x_1,\ldots,x_{d+1})}$, then $\overline{\K}_\p = (\overline{x_{d+1}})_\p$ and the claim is satisfied. Now assume that $\p\supseteq \overline{(x_1,\ldots,x_{d+1})}$ and recall that $\hgt I_d(B(\psi)) = 2$. With this, the ideal $(x_1,\ldots,x_{d+1})+I_d(B(\psi))$ is of height $d+3$ in $S[T_1,\ldots,T_{d+1}]$. Thus the image of this ideal in $\rr(J)$ is of height $3$, hence $\p \nsupseteq \overline{I_d(B(\psi))}$. Now let $w = \det B'$ and write $w_1,\ldots,w_d$ for the other $d\times d$ minors where each $w_i$ is the determinant of the submatrix of $B(\psi)$ obtained by deleting row $i$. 

Since $\overline{w} \in \overline{\K} \subset \p$ and $\p \nsupseteq \overline{I_d(B(\psi))}$, it follows that $\overline{w_j} \notin \p$ for some $j$. By \Cref{crlemma}, we have $\overline{x_j} \cdot \overline{w} = (-1)^{d-i+1} \overline{x_{d+1}}\cdot \overline{w_j}$ in $\rr(J)$. Localizing at $\p$, $\overline{w_j}$ becomes a unit and so $(\overline{x_{d+1}})_\p \in (\overline{w})_\p$. Thus $\overline{\K}_\p = (\overline{w},\overline{x_{d+1}})_\p = (\overline{w})_\p$ and again the claim is satisfied. This combined with the result of \Cref{Kscm} shows the assumptions of \cite[3.4]{SV} are met and so indeed $\overline{\K}^m = \overline{\K}^{(m)}$.

The claim regarding the special fiber ring is now clear as $(x_1,\ldots,x_{d+1}) +\A = (x_1,\ldots,x_{d+1}) + (\det \BB_m)$ by \Cref{powersym}. Additionally note that $\det \BB_m \neq 0$ as it is the only equation pure in the variables $T_1,\ldots,T_{d+1}$ and $\ell(I)=d$. Hence modulo $(x_1,\ldots,x_{d+1})\rr(I)$, we see that $\ff(I)$ is indeed a hypersurface ring defined by an equation of degree $md$.
\end{proof}

\begin{corollary}\label[corollary]{other form of A}
In the setting of \Cref{mainresult}, the generating set of $\A= \BL_m + (\det \BB_m)$ is minimal. In particular, $\mu(\A) = d+m+1$ and $\mu(\J) = d+m$.
\end{corollary}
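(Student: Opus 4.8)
The plan is to count degrees and generators carefully, using the structure of the pair $(\BB_m,\BL_m)$ together with information already extracted about $\rr(I)$, $\ff(I)$, and $\rr(J)$. Write $\BL_m = (g_1,\ldots,g_r)$ where the $g_i$ are the bihomogeneous generators accumulated through the $m$ iterative steps, so that $\A = (g_1,\ldots,g_r,\det\BB_m)$. First I would pin down $r$. Since $\BL_1 = \LL = (\ell_1,\ldots,\ell_d,f)$ contributes $d+1$ generators, and each subsequent step $i = 2,\ldots,m$ appends exactly the new generators coming from $I_{d,1}(B(\psi),\BB_{i-1})\cap(x_1,\ldots,x_{d+1})$, I would argue that in the second analytic deviation one case ($n = d+1$, so $\BB_i$ square) each step contributes exactly one new generator: the relevant intersection, modulo $\BL_{i-1}$, is generated modulo $\H$ by a single element of $\rr(J)$ because $D_{i-1} = \frac{\overline f\,\overline\K^{i-1}}{\overline x_{d+1}^{\,i-1}} \cong \overline\K^{i-1}$ and $\overline\K = (\overline w,\overline x_{d+1})$ is generated by two elements with $\overline w$ of $T$-degree $d$; the ``genuinely new'' part at each stage is the contribution of a higher power of $\overline w$, which is one generator. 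Thus $r = d + m$ and $\mu(\A)\le d+m+1$, $\mu(\J)\le d+m$.

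Next I would prove these bounds are sharp via a degree/Hilbert-function argument. The generator $\det\BB_m$ is the unique element of $\A$ pure in $T_1,\ldots,T_{d+1}$ (as noted in the proof of \Cref{mainresult}), of bidegree $(0,md)$ in $(\underline x,\underline T)$; it cannot be redundant because its image generates the defining ideal $(\mathfrak f)$ of $\ff(I) = k[\underline T]/(\mathfrak f)$ with $\deg\mathfrak f = md > 0$, and dropping it would force $\ff(I)$ to be a polynomial ring, contradicting $\ell(I) = d < d+1$. For the remaining $d+m$ generators of $\BL_m$, I would compute their $\underline x$-degrees: the $d$ linear forms $\ell_1,\ldots,\ell_d$ have $\underline x$-degree $1$, the element $f$ has $\underline x$-degree $m$, and the generator produced at step $i$ (for $2\le i\le m$) has $\underline x$-degree $m - i + 1$ — these come from clearing $\overline x_{d+1}^{\,i-1}$ against $\overline f\,\overline w^{\,i-1}$, so after pulling back to $S[\underline T]$ they sit in degree $m-(i-1)$ in the $x$'s (and some positive $T$-degree). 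Since these $\underline x$-degrees are $1$ (with multiplicity $d$) and $m, m-1, \ldots, 2, 1$ with the linear ones already accounted for, and since $\BL_m\subseteq\A$ is generated by forms whose $\underline x$-degrees are bounded by $m$, no generator of positive $\underline x$-degree can be written in terms of the others of strictly larger or equal $\underline x$-degree together with $\det\BB_m$ (which has $\underline x$-degree $0$): a nontrivial relation would have to be homogeneous in the $\underline x$-grading, and in each $\underline x$-degree the set of available generators is too small to produce a syzygy. Matching these counts against the $\Z$-graded (by $\underline x$-degree) pieces of $\A$ forces $\mu(\A) = d+m+1$ and hence $\mu(\J) = \mu(\overline{\A}) = d+m$, the last equality because passing modulo $(f)$ kills exactly the generator $f$ of $\underline x$-degree $m$ and nothing else (as $f\in\LL$ but $f$ is not in the ideal generated by the other $d+m$ generators plus $(f)$... rather, $\overline f = 0$, so $\mu$ drops by exactly one).

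Alternatively — and this is probably cleaner — I would get minimality directly from a resolution or depth count: by \Cref{Theorem1} (equivalently \Cref{mainresult} and the almost-Cohen-Macaulay conclusion established there), $\rr(I)$ has a known (almost) Cohen-Macaulay structure, and one can read $\mu(\J)$ off the last Betti number of an explicit free resolution of $\rr(I)$ over $R[\underline T]$, or off $\rr(J)$: since $\rr(E'')$, here $\rr(I)$, is a deformation of $\rr(J)$ by the height-one map $\rr(J)\to\rr(I)$ with kernel $\overline\A$, and $\overline\A\cong\D = \frac{\overline f\,\overline\K^{(m)}}{\overline x_{d+1}^{\,m}}\cong\overline\K^{(m)} = \overline\K^m$ (by \Cref{powersym}), the minimal number of generators of $\A$ equals $1 + \mu(\overline\K^m)$, and $\mu(\overline\K^m) = \mu((\overline w,\overline x_{d+1})^m) = m+1$ plus the $d$ linear equations $\ell_1,\dots,\ell_d$ that already live in $\H$ but not in... here one must be careful: $\mu(\A)$ over $S[\underline T]$ counts $\ell_1,\ldots,\ell_d$ as well, giving $d + (m+1) = d+m+1$, and then $\mu(\J) = d+m$ since $\overline f = 0$. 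The main obstacle, and the step I would spend the most care on, is justifying that the iterative construction produces \emph{exactly} one new minimal generator per step with no collapsing or extra generators — i.e. that $\mu(\overline\K^m) = m+1$ and that each $\BL_i$ genuinely gains one generator over $\BL_{i-1}$; this is where the almost complete intersection structure $\overline\K = (\overline w,\overline x_{d+1})$ with $\overline w\overline x_{d+1}\in$ (lower powers) and the fact that $\overline w,\overline x_{d+1}$ form part of a system of parameters must be used to control the minimal generators of the powers $\overline\K^m$ precisely.
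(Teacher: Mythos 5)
Your generator count and bidegree bookkeeping are correct ($\A$ is generated by $\ell_1,\ldots,\ell_d,f,\det\BB_1,\ldots,\det\BB_m$, of bidegrees $(1,1),(m,0),(m-1,d),\ldots,(0,md)$), and your argument that $\det\BB_m$ cannot be dropped (else $\ff(I)$ would be a polynomial ring, contradicting $\ell(I)=d$) matches the paper's reasoning that $\det\BB_m\neq 0$. The gap is in the remaining cases. After the bidegree analysis, the only possible redundancy for $\det\BB_i$ with $1\leq i\leq m-1$ is the membership $\det\BB_i\in(\ell_1,\ldots,\ell_d)$, and this membership is perfectly compatible with the bigrading: each $\ell_j$ has bidegree $(1,1)$, so multiplying by forms of bidegree $(m-i-1,\,id-1)$ lands exactly in the bidegree of $\det\BB_i$. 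Hence no degree or Hilbert-function argument can rule it out, and your assertion that ``in each $\x$-degree the set of available generators is too small to produce a syzygy'' is not a justification -- the issue is not syzygies among the listed generators but the possible containment of a later determinant in the ideal of linear equations. The paper closes precisely this case by a structural argument: if $\det\BB_i\in(\ell_1,\ldots,\ell_d)=(\x\cdot B(\psi))$, then in forming $\BB_{i+1}$ as in \Cref{modit} the appended column may be taken to be a combination of the columns of $B(\psi)$, forcing $\det\BB_{i+1}=0$ and inductively $\det\BB_m=0$, contradicting $\det\BB_m\neq 0$. This step has no substitute in your proposal.

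Your ``cleaner'' alternative via $\overline{\A}\cong\overline{\K}^m$ carries the same hole in a different guise: to conclude $\mu(\A)=d+m+1$ you would need both $\mu_{\rr(J)}(\overline{\K}^m)=m+1$ and the additivity $\mu(\A)=\mu(\H)+\mu(\overline{\A})$ for the containment $\H\subset\A$; neither is automatic (the additivity fails for general containments, and controlling $\mu$ of powers of the almost complete intersection $\overline{\K}=(\overline{w},\overline{x_{d+1}})$ is exactly the point you defer). So as written the proposal does not establish minimality; supplying the ``membership in $(\ell_1,\ldots,\ell_d)$ forces the next determinant to vanish'' argument, or an actual proof of $\mu(\overline{\K}^m)=m+1$ together with the splitting of the generator count, is what is missing.
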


\begin{proof}
 First note that $\det \BB_i \neq 0$ for all $1\leq i\leq m$. Following the construction in \Cref{modit}, if $\det \BB_i =0$ for some $i$ in the range above then $\det \BB_j= 0$ for all $i\leq j\leq m$. However, this is impossible as $\det \BB_m \neq 0$ as mentioned in the proof of \Cref{mainresult}. Adopt the bigrading on $S[T_1,\ldots,T_{d+1}]$ given by $\deg x_i = (1,0)$ and $\deg T_i = (0,1)$. To show minimality, first note that $\BL_1= \LL$ is minimally generated by $\ell_1,\ldots,\ell_d,f$ as it is a complete intersection ideal of height $d+1$. With this, it suffices to show $\det \BB_i \notin \BL_{i}$ for all $1\leq i\leq m$ and we show this inductively. 
 
First suppose that $i=1$. If $m=1$, then $\det \BB_1$ is of bidegree $(0,d)$, hence $\det \BB_1\notin \BL_1=\LL$ by degree considerations. If $m\geq 2$, then $\deg (\det \BB_1) = (m-1,d)$ and so if $\det \BB_1 \in \BL_1 = \LL$, then $\det \BB_1\in (\ell_1,\ldots,\ell_d)$ as $\deg f =(m,0)$. However, in the creation of $\BB_2$, the column corresponding to $\det \BB_1$ is then a combination of the other $d$ columns of $B(\psi)$. Thus $\det \BB_2=0$ which is impossible as previously mentioned.

 Now suppose $m\geq i\geq 2$ and $\det \BB_j \notin \BL_{j}$ for all $1\leq j\leq i-1$. Notice that $\BL_i =(\ell_1,\ldots,\ell_d) + (f,\det \BB_1,\ldots,\det \BB_{i-1})$ and the bidegrees of these generators are known. Indeed, for all $1\leq j\leq m$ one has that $\deg (\det \BB_j) = (m-j,j\cdot d)$. Thus if $i< m$ and $\det \BB_i \in \BL_i$, by degree considerations again it must be that $\det \BB_i \in (\ell_1,\ldots,\ell_d)$. However, just as before in the creation of the next modified Jacobian dual iteration, this would imply $\det \BB_{i+1} =0$ which is impossible. In the case $i=m$, we see $\det \BB_m \notin \BL_m$ by degree reasons as $\deg (\det \BB_m) = (0,md)$. The claim regarding the number of generators of $\J$ then follows as $f$ is part of a minimal generating set of $\A$.
\end{proof}


We remark that the procedure of modified Jacobian dual iterations is quite simple when $\mu(I)=d+1$. In this case there are precisely $m$ iterations, where a single determinant is taken at each step allowing $\A$ to be built up, one minimal generator at a time. A similar procedure was implemented in \cite{CHW} and provided the inspiration for the algorithm of modified Jacobian dual iterations presented here.

\begin{example}
Let $S= k[x_1,x_2,x_3]$, $f=x_1^3$, and $R=S/(f)$. Consider the matrix with entries in $R$,
$$\varphi =\begin{bmatrix}
\overline{x_1} & \overline{x_3} \\
\overline{x_2}& \overline{x_1} \\
\overline{x_3}& \overline{x_2}\\
\end{bmatrix}$$
where $\overline{\,\cdot\,}$ denotes images modulo $(f)$. A simple computation shows $\grd I_2(\varphi) \geq 2$, hence the Hilbert-Burch theorem confirms the existence of a perfect $R$-ideal $I$ of grade $2$ with $\varphi$ as its presentation matrix. Additionally, $I$ satisfies the condition $G_2$ automatically. With this, we have
$$\psi =\begin{bmatrix}
x_1 & x_3 \\
x_2& x_1 \\
x_3& x_2\\
\end{bmatrix}$$
as the corresponding matrix of linear entries in $S$. This matrix has Jacobian dual
$$B(\psi) =\begin{bmatrix}
T_1 & T_2 \\
T_2& T_3 \\
T_3& T_1\\
\end{bmatrix}$$
and now we construct the modified Jacobian dual $B$ and perform modified Jacobian dual iterations. For any element $F\in S[T_1,T_2,T_3]$, let $\partial F$ denote any column of bihomogeneous entries of constant bidegree such that $[x_1\,\, x_2\,\, x_3] \cdot \partial F =F$. Recall the matrices in the method of modified Jacobian dual iterations are not necessarily unique, but this is addressed in \Cref{iterationssec}. In this example, the matrices can be taken as
\[
\begin{array}{lll}
      f=x_1^3,& 
       \BB_1 =[B(\psi)\,|\,\partial f]= \begin{bmatrix}
T_1 & T_2 &x_1^2\\
T_2& T_3&0 \\
T_3& T_1 &0\\
\end{bmatrix}\\[8ex]

        F_1 = \det \BB_1 = x_1^2(T_1T_2-T_3^2), & 
         \BB_2 =[B(\psi)\,|\,\partial F_1]= \begin{bmatrix}
T_1 & T_2 &x_1(T_1T_2-T_3^2)\\
T_2& T_3&0 \\
T_3& T_1 &0\\
\end{bmatrix}\\[8ex]

 F_2 = \det \BB_2 = x_1(T_1T_2-T_3^2)^2, & 
  \BB_3 =[B(\psi)\,|\,\partial F_2]= \begin{bmatrix}
T_1 & T_2 &(T_1T_2-T_3^2)^2\\
T_2& T_3&0 \\
T_3& T_1 &0\\
\end{bmatrix}\\[8ex]
 F_3 = \det \BB_3 = (T_1T_2-T_3^2)^3. 
\end{array}
\]
By \Cref{mainresult} we have $\A= \LL+(F_1,F_2,F_3)$ where $\LL = (\x\cdot B)$. Hence modulo $(f)$, the defining ideal of $\rr(I)$ is $\J = \overline{\LL+(F_1,F_2,F_3)}$. Notice that $\A$ and $\J$ are not prime ideals. This is to be expected as $\rr(I)$ is not a domain since $R$ is not a domain in this example.
\end{example}

\begin{remark}\label{differentials}
With the assumptions of \Cref{mainresult}, if $k$ is a field of characteristic zero or the characteristic of $k$ is strictly larger than $m$, there are two particular ways in which the equations of $\A$ can be described using differentials. 

\begin{enumerate}
    \item First, we may choose particular matrices in the process of modified Jacobian dual iterations. By setting $\BL_0 = (\ell_1,\ldots,\ell_d)$ and $F_0 = f$, one may recursively form triples
    $$\BL_i= \LL_{i-1} +(F_{i-1}) \quad \quad \BB_i = [B(\psi)\,|\, \partial F_{i-1}] \quad \quad F_i = \det \BB_i$$
    for $1\leq i \leq m$ where $\partial F_i = [\frac{\partial F_i}{\partial x_1}\ldots \frac{\partial F_i}{\partial x_{d+1}}]^t$, the column of partial derivatives of $F_i$. As $f$ is a homogeneous polynomial, each $F_i$ is as well. Thus it follows that the matrices $\BB_i$ above satisfy the conditions of \Cref{modit}. From the assumptions on the characteristic of $k$, each $\x$-degree of these equations is a unit, hence their multiples in the Euler formula do not affect ideal generation.
    
    \item In a similar manner, one may also use differential operators to describe the equations of $\A$. Let $\partial_{\x}$ denote the column $[\frac{\partial }{\partial x_1}\ldots \frac{\partial }{\partial x_{d+1}}]^t$ and consider the operator $\partial = \det [\B(\psi)\,|\, \partial_{\x}]$. Letting $\partial^i$ denote composition $i$ times, from the previous part, it follows that 
    $$\A = \big(\ell_1,\ldots,\ell_d,f,\partial(f),\partial^2(f),\ldots,\partial^m(f)\big)$$
    as the same equations are produced.
\end{enumerate}
Notice that in the first case, each $\BB_i$ is the transpose of the Jacobian matrix of $\ell_1,\ldots,\ell_d,F_{i-1}$ with respect to $x_1,\ldots,x_{d+1}$. The second method above is also interesting in that an algorithm is no longer necessary to produce the equations of $\A$, but rather a single differential operator. Using $D$-modules, a similar approach was taken to study the defining ideal of the Rees algebra in \cite{CR}. 

It is also interesting to note that these alternative descriptions provide an additional reason as to why there are exactly $m$ iterations of the modified Jacobian dual. As $B(\psi)$ consists of entries in $k[T_1,\ldots,T_{d+1}]$, the $\x$-degree of each equation produced decreases by $1$ in each step after differentiating and taking a determinant. Hence the procedure must terminate precisely after $m$ steps.
\end{remark}

\begin{remark}\label[remark]{f linear case}
We note that \Cref{mainresult} recovers the main result of \cite{MU} (in the case $\mu(I)=d+1$) when $m=1$. After a change of coordinates, it can be assumed that the factored equation $f$ is one of the indeterminates, say $f=x_{d+1}$. Thus $R\cong k[x_1,\ldots,x_d]$ and we remark that the submatrix $B'$ of $B(\psi)$, as in \Cref{notation2}, is exactly the Jacobian dual of the presentation matrix of $I$ in this ring with respect to $x_1,\ldots,x_{d}$. Now the column corresponding to $f$ in the modified Jacobian dual $B$ consists of all zeros except for a 1 in the last entry. Thus the determinant of $B$, the first and only iteration, is exactly the determinant of $B'$.
\end{remark}

\subsection{Depth and Cohen-Macaulayness}\label[section]{depthsec}

With the assumptions of \Cref{mainresult}, we study the depth and Cohen-Macaulay property of $\rr(I)$ now that the defining equations are understood. Despite $\J$ being the defining ideal of $\rr(I)$ in the traditional sense, it will be more convenient to use the $S[T_1,\ldots,T_{d+1}]$-ideal $\A$ and the isomorphism $\rr(I) \cong S[T_1,\ldots,T_{d+1}]/\A$. We begin by creating a handful of short exact sequences which will be essential to our study. We adopt the setting of \Cref{notation2} throughout.

Let $\m =(x_1,\ldots,x_{d+1})$ and recall from \Cref{Kscm}, $\overline{\K} = (\overline{w},\overline{x_{d+1}})$ where $w= \det B'$. Recall $\overline{\K}$ is a Cohen-Macaulay ideal and by \Cref{colons}, $\m \rr(J) = (\overline{x_{d+1}}):\overline{\K}$. Hence there is short exact sequence of bigraded $\rr(J)$-modules
\[
0\longrightarrow \m \rr(J) (0,-d) \longrightarrow \rr(J)(-1,0)\oplus \rr(J)(0,-d)\longrightarrow \overline{\K} \longrightarrow 0.
\]
Passing to the induced sequence obtained by applying the functor $\sym(-)$ and considering the $m^{\text{th}}$ graded strand, we obtain 
$$\m \rr(J) (0,-d) \otimes \text{Sym}_{m-1}\big(\rr(J)(-1,0)\oplus \rr(J)(0,-d)\big)\overset{\sigma}{\longrightarrow}\hspace{40mm}$$ 
$$\hspace{40mm}\text{Sym}_{m}\big(\rr(J)(-1,0)\oplus \rr(J)(0,-d)\big) \longrightarrow \text{Sym}_m(\overline{\K}) \longrightarrow 0.$$

Notice that $\ker \sigma$ is torsion due to rank considerations. However, it is a submodule of a torsion-free $\rr(J)$-module, hence it must be zero. Thus $\sigma$ is injective and we now have the short exact sequence
$$0\longrightarrow \m \rr(J) (0,-d) \otimes \text{Sym}_{m-1}\big(\rr(J)(-1,0)\oplus \rr(J)(0,-d)\big)\overset{\sigma}{\longrightarrow}\hspace{30mm}$$ 
$$\hspace{40mm}\text{Sym}_{m}\big(\rr(J)(-1,0)\oplus \rr(J)(0,-d)\big) \longrightarrow \text{Sym}_m(\overline{\K}) \longrightarrow 0.$$

From \Cref{Kscm}, it can be seen that $\overline{\K}$ satisfies the $G_\infty$ condition. As \Cref{Kscm} also shows $\overline{\K}$ is strongly Cohen-Macaulay, by \cite[2.6]{HSV1} it is an ideal of linear type, hence $\sym_m(\overline{\K}) \cong \overline{\K}^m$. With this and passing to a direct sum decomposition, the short exact sequence above is
\begin{equation}\label[equation]{directsumseq}
0\longrightarrow \displaystyle{\bigoplus_{i=0}^{m-1}}\,\m \rr(J)\big(-i,-(m-i)d\big)\longrightarrow \displaystyle{\bigoplus_{i=0}^{m}}\, \rr(J)\big(-i,-(m-i)d\big) \longrightarrow \overline{\K}^m\longrightarrow 0.
\end{equation}

We are now ready to compute the depth of $\rr(I)$. Recall a Noetherian local ring $A$ is said to be \textit{almost} Cohen-Macaulay if $\dep A = \dim A-1$.

\begin{theorem}\label{depth}
In the setting of \Cref{mainresult}, $\rr(I)$ is Cohen-Macaulay if and only if $m=1$ and is almost Cohen-Macaulay otherwise. Additionally, $\ff(I)$ is Cohen-Macaulay.
\end{theorem}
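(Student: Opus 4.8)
The plan is to reduce the computation of $\dep \rr(I)$ to that of $\dep_{\rr(J)}\overline{\K}^m$. By \Cref{DandA} — together with the equality $\overline{\K}^m=\overline{\K}^{(m)}$ established in the proof of \Cref{mainresult} — we have $\rr(I)\cong \rr(J)/\D$ with $\D=\overline f\,\overline{\K}^m/\overline{x_{d+1}}^{\,m}$, and since $\deg f=\deg x_{d+1}^m=(m,0)$ in the standard bigrading, multiplication by $\overline f/\overline{x_{d+1}}^{\,m}$ is a bidegree-$(0,0)$ isomorphism $\D\cong\overline{\K}^m$ of $\rr(J)$-modules. Hence $0\to\overline{\K}^m\to\rr(J)\to\rr(I)\to 0$ is exact; as $\rr(J)$ is Cohen--Macaulay of dimension $d+2$ (\Cref{PropertiesOfA}) while $\dim\rr(I)=\dim R+1=d+1$, the depth lemma yields $\dep\rr(I)=\dep_{\rr(J)}\overline{\K}^m-1$, so $\rr(I)$ is Cohen--Macaulay if and only if $\overline{\K}^m$ is a maximal Cohen--Macaulay $\rr(J)$-module. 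The claim on $\ff(I)$ is immediate: by \Cref{mainresult} it is the hypersurface ring $k[T_1,\dots,T_{d+1}]/(\mathfrak f)$.

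Next I would read off $\dep_{\rr(J)}\overline{\K}^m\geq d+1$ from \eqref{directsumseq}: its middle term is $\rr(J)$-free and its left-hand term is a direct sum of twisted copies of $\m\rr(J)=(x_1,\dots,x_{d+1})\rr(J)$, which is maximal Cohen--Macaulay because $\rr(J)/\m\rr(J)=\ff(J)\cong k[T_1,\dots,T_{d+1}]$ (every $\ell_i$ lies in $(x_1,\dots,x_{d+1})$) is Cohen--Macaulay of dimension $d+1$, forcing $\dep\m\rr(J)\geq d+2$ via $0\to\m\rr(J)\to\rr(J)\to\ff(J)\to 0$. The depth lemma applied to \eqref{directsumseq} then gives $\dep_{\rr(J)}\overline{\K}^m\geq d+1$, and since $\overline{\K}^m\cong\D$ is a nonzero $\rr(J)$-ideal we get $\dep_{\rr(J)}\overline{\K}^m\in\{d+1,d+2\}$. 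In particular $\rr(I)$ is always at least almost Cohen--Macaulay.

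If $m=1$ then $\overline{\K}^{1}=\overline{\K}$ is a Cohen--Macaulay $\rr(J)$-ideal of height one (\Cref{PropertiesOfA}), hence maximal Cohen--Macaulay, so $\rr(I)$ is Cohen--Macaulay (compatibly with \Cref{f linear case}). For $m\geq 2$ one must show $\overline{\K}^m$ is \emph{not} maximal Cohen--Macaulay, equivalently $H^{d+1}_{\mathfrak M}(\overline{\K}^m)\neq 0$ for the homogeneous maximal ideal $\mathfrak M$. Writing \eqref{directsumseq} as $0\to M\to F\to\overline{\K}^m\to 0$ with $M,F$ maximal Cohen--Macaulay (so their local cohomology is concentrated in degree $d+2$), the long exact sequence identifies $H^{d+1}_{\mathfrak M}(\overline{\K}^m)\cong\ker\!\big(H^{d+2}_{\mathfrak M}(M)\to H^{d+2}_{\mathfrak M}(F)\big)$, which by graded local duality over $\rr(J)$ vanishes if and only if $\operatorname{Hom}_{\rr(J)}(F,\omega_{\rr(J)})\to\operatorname{Hom}_{\rr(J)}(M,\omega_{\rr(J)})$ is surjective. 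Since $\rr(J)=S[\T]/(\ell_1,\dots,\ell_d)$ is a complete intersection of forms of bidegree $(1,1)$ one has $\omega_{\rr(J)}\cong\rr(J)(-1,-1)$, and dualizing $0\to\m\rr(J)\to\rr(J)\to\ff(J)\to 0$ into $\omega_{\rr(J)}$ (using $\operatorname{Hom}_{\rr(J)}(\ff(J),\omega_{\rr(J)})=0$ for dimension reasons) yields $0\to\omega_{\rr(J)}\to\omega_{\m\rr(J)}\to\omega_{\ff(J)}\to 0$ with $\omega_{\ff(J)}\cong k[T_1,\dots,T_{d+1}](0,-(d+1))$. Tracking the bidegrees of generators through the symmetric-power decompositions of $M$ and $F$, the $\omega_{\ff(J)}$-contributions coming from the $i\geq 1$ summands of $\operatorname{Hom}_{\rr(J)}(M,\omega_{\rr(J)})$ land in bidegrees not reached by $\operatorname{Hom}_{\rr(J)}(F,\omega_{\rr(J)})$ precisely when $m\geq 2$, so the map fails to be surjective; hence $\dep_{\rr(J)}\overline{\K}^m=d+1$ and $\dep\rr(I)=d=\dim\rr(I)-1$.

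The main obstacle is exactly this last step: \eqref{directsumseq} only supplies one inequality through the depth lemma, so proving non-maximal-Cohen--Macaulayness of $\overline{\K}^m$ for $m\geq 2$ requires genuine control of the map it induces on top local cohomology (equivalently, on canonical modules), and the bidegree bookkeeping above is the technical heart. An alternative, since the statement is stable under specialization within the hypotheses of \Cref{setting1.5}, is to verify non-Cohen--Macaulayness in a model case in the spirit of the example with $f=x_1^m$; given the deliberate parallel with \cite{BM}, adapting their analogous almost-Cohen--Macaulay argument is likely the most economical route.
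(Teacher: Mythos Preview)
Your argument for the lower bound $\dep\rr(I)\geq d$ is essentially the paper's: you use the same three short exact sequences (the one defining $\ff(J)$, the sequence \eqref{directsumseq}, and the sequence $0\to\overline{\A}\to\rr(J)\to\rr(I)\to 0$) together with the isomorphism $\overline{\A}\cong\overline{\K}^m$, and you handle $m=1$ and the Cohen--Macaulayness of $\ff(I)$ in the same way.

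Where you diverge is the non-Cohen--Macaulayness for $m\geq 2$. The paper does not touch local cohomology of $\overline{\K}^m$ at all; it simply invokes \cite[4.5]{SUV2} or \cite[2.1]{PU}. These give necessary conditions for $\rr(I)$ to be Cohen--Macaulay in terms of the reduction number (equivalently here, the degree of the fiber equation): in the present situation $\ell(I)=d$, $I$ satisfies $G_d$, and $\ff(I)$ is a hypersurface defined by a form of degree $md$, so the reduction number is $md-1$; for $m\geq 2$ this exceeds the bound those results allow, and $\rr(I)$ cannot be Cohen--Macaulay. That is the entire argument --- one sentence and a citation.

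Your proposed route through $H^{d+1}_{\mathfrak M}(\overline{\K}^m)$ and the dualized map $\operatorname{Hom}(F,\omega_{\rr(J)})\to\operatorname{Hom}(M,\omega_{\rr(J)})$ is in principle workable, but what you have written is only a sketch: you assert that ``the $\omega_{\ff(J)}$-contributions \ldots\ land in bidegrees not reached'' without actually analyzing the map $\sigma^\vee$. Bidegree considerations on the modules alone cannot settle this, since $\operatorname{Hom}(F,\omega_{\rr(J)})$ has summands $\rr(J)(i-1,(m-i)d-1)$ for $0\le i\le m$ which are nonzero in every relevant bidegree; one must show the actual image misses part of the $\omega_{\ff(J)}$-quotient, and that requires tracking how $\sigma$ interacts with the generators $\overline{x_{d+1}},\overline w$ of $\overline{\K}$. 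Your fallback suggestion of specializing to a model case like $f=x_1^m$ is also problematic: Cohen--Macaulayness is not obviously stable under variation of $f$ within the hypotheses, so a single example would not suffice. The citation approach in the paper sidesteps all of this.
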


\begin{proof}
From the short exact sequence
\begin{equation}\label{regseq1}
0 \longrightarrow\m \rr(J) \longrightarrow \rr(J) \longrightarrow \ff(J)\longrightarrow 0
\end{equation}
it follows that $\dep \m \rr(J) \geq d+2$, hence $\dep \m \rr(J) =d+2$. This together with (\ref{directsumseq}) shows that $\dep \overline{\K}^m \geq d+1$. Finally, the sequence
\begin{equation}\label{regseq2}
0 \longrightarrow \overline{\A} \longrightarrow \rr(J) \longrightarrow  \rr(I) \longrightarrow 0
\end{equation}
and the isomorphism $\overline{\A} = \frac{\overline{f}\overline{\K}^{(m)}}{\overline{x_{d+1}}^m} \cong \overline{\K}^m$ give that $\dep \rr(I) \geq d$. The Cohen-Macaulayness in the case $m=1$ follows from \Cref{f linear case} and \cite{MU}. Now if $m\geq 2$, it can be seen that $\rr(I)$ is not Cohen-Macaulay by either \cite[4.5]{SUV2} or \cite[2.1]{PU}. Thus in this latter case we have $\dep \rr(I) =d$, hence $\rr(I)$ is indeed almost Cohen-Macaulay. The assertion on the Cohen-Macaulayness of $\ff(I)$ is clear as it is a hypersurface ring by \Cref{mainresult}.
\end{proof}

\subsection{Relation Type and Regularity}

We now introduce two important numerical invariants, the relation type and the Castelnuovo-Mumford regularity. The \textit{relation type} $\rt(I)$ of $I$ is simply the maximum $\T$-degree appearing in a minimal generating set of the defining ideal of the Rees algebra $\rr(I)$. 

For the regularity, we follow the definitions and conventions of \cite{Trung}. Let $A= \bigoplus_{n\geq 0} A_n$ be a finitely generated standard graded ring over Noetherian ring $A_0$. For a nonzero graded $A$-module $M$, we define
$$a(M) = \max\big\{ n \,\big| \, M_n \neq 0\big\}$$
where $M_n$ denotes the homogeneous degree $n$ component of $M$. Furthermore, we adopt the convention that $a(M) = -\infty$ if $M=0$. With this, for $i\geq 0$ define $a_i(A) =a\big( H_{A_+}^i(A)\big)$ where $A_+$ is the $A$-ideal generated by homogeneous elements of positive degree and $H_{A_+}^i(-)$ is the $i^{\text{th}}$ local cohomology functor with respect to this ideal. The \textit{Castelnuovo-Mumford regularity} of $A$ is defined as
$$\reg(A) = \max\{a_i(A) + i \,|\, i\geq 0\}.$$

As there are multiple gradings on $\rr(I)$, we consider its regularity with respect to $\m = (x_1,\ldots,x_{d+1})$, $\tt=(T_1,\ldots,T_{d+1})$, and $\n=(x_1,\ldots,x_{d+1},T_1,\ldots,T_{d+1})$. When computing regularity with respect to $\m$ we set $\deg x_i=1$ and $\deg T_i =0$. Similarly, when computing regularity with respect to $\tt$ we set $\deg x_i=0$ and $\deg T_i =1$. Lastly, when computing regularity with respect to $\n$, we adopt the total grading and set $\deg x_i=1$ and $\deg T_i =1$.

\begin{theorem}\label[theorem]{rtandreg}
In the setting of \Cref{mainresult}, we have
$$\rt(I) = \reg \ff(I) +1 = {\rm reg}_\tt \rr(I)+1 = md.$$
Additionally, $\reg_\m \rr(I) \leq m-1$ and $\reg_\n \rr(I) \leq (m+1)d$.
\end{theorem}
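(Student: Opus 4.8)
The plan is to reduce all three asserted equalities to the single value $md$ and to derive the three regularity bounds by reading the short exact sequences (\ref{regseq1}), (\ref{directsumseq}) and (\ref{regseq2}) from the proof of \Cref{depth} in each of the gradings $\m$, $\tt$, $\n$.

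Two of the identifications are immediate. By \Cref{other form of A}, $\A$ is minimally generated by $\ell_1,\dots,\ell_d$ (of $\T$-degree $1$), by $f$ (of $\T$-degree $0$), and by $\det\BB_1,\dots,\det\BB_m$, where $\det\BB_j$ has bidegree $(m-j,\,jd)$; hence the largest $\T$-degree occurring in a minimal generating set of the defining ideal is $md$, i.e. $\rt(I)=md$. By \Cref{mainresult}, $\ff(I)\cong k[T_1,\dots,T_{d+1}]/(\mathfrak f)$ with $\deg\mathfrak f=md$, so the resolution $0\to k[T_1,\dots,T_{d+1}](-md)\to k[T_1,\dots,T_{d+1}]\to\ff(I)\to 0$ gives $\reg\ff(I)=md-1$. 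It therefore suffices to prove $\reg_\tt\rr(I)=md-1$, after which $\rt(I)=md=\reg\ff(I)+1=\reg_\tt\rr(I)+1$.

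For the upper bounds I would first record the regularities of the ingredients. Since $n=d+1$, the ideal $J$ is of linear type and $\rr(J)\cong\sym(J)=S[T_1,\dots,T_{d+1}]/(\ell_1,\dots,\ell_d)$ is a complete intersection; thus $\ell_1,\dots,\ell_d$ is a regular sequence in the bigraded polynomial ring $S[T_1,\dots,T_{d+1}]$ and $\rr(J)$ is resolved by the Koszul complex, whose $j$-th term is $S[T_1,\dots,T_{d+1}](-j,-j)^{\binom{d}{j}}$ because each $\ell_i$ has bidegree $(1,1)$. Since $S[T_1,\dots,T_{d+1}]$ has regularity $0$ in each of the three gradings, this gives $\reg_\m\rr(J)\le 0$, $\reg_\tt\rr(J)\le 0$ and $\reg_\n\rr(J)\le d$; moreover $\ff(J)=\rr(J)/\m\rr(J)=k[T_1,\dots,T_{d+1}]$, since each $\ell_i$ vanishes modulo $\m$, so $\reg_?\ff(J)=0$ for $?\in\{\m,\tt,\n\}$. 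Plugging these into (\ref{regseq1}) yields $\reg_\m\m\rr(J)\le 1$, $\reg_\tt\m\rr(J)\le 1$ and $\reg_\n\m\rr(J)\le d$. Reading (\ref{directsumseq}) in each grading — the twist $(-i,-(m-i)d)$ contributes $i$ to $\m$-regularity, $(m-i)d$ to $\tt$-regularity and $i+(m-i)d$ to $\n$-regularity of a summand — and using additivity of regularity over finite direct sums together with the local cohomology exact sequence, one obtains
\[
\reg_\m\overline{\K}^m\le m,\qquad \reg_\tt\overline{\K}^m\le md,\qquad \reg_\n\overline{\K}^m\le (m+1)d .
\]
Finally, by \Cref{DandA} and the equality $\overline{\K}^m=\overline{\K}^{(m)}$ established in the proof of \Cref{mainresult}, multiplication by $\overline f/\overline{x_{d+1}}^{\,m}$ is an isomorphism $\overline{\K}^m\overset{\sim}{\longrightarrow}\overline{\A}$ of bidegree $(0,0)$ (since $\overline f$ and $\overline{x_{d+1}}^{\,m}$ both have bidegree $(m,0)$; equivalently, the minimal generators of $\overline{\A}$ and of $\overline{\K}^m$ occupy the same bidegrees $(i,(m-i)d)$ for $0\le i\le m$). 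Hence (\ref{regseq2}) gives $\reg_\m\rr(I)\le m-1$, $\reg_\tt\rr(I)\le md-1$ and $\reg_\n\rr(I)\le (m+1)d-1\le (m+1)d$.

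It remains to show $\reg_\tt\rr(I)\ge md-1$. Here I would invoke the general inequality $\rt(I)\le\reg_\tt\rr(I)+1$: the defining equations of $\rr(I)$ are minimal first syzygies of $\rr(I)$ over the $\T$-polynomial ring $S[T_1,\dots,T_{d+1}]$, whose $\tt$-regularity is $0$, so their $\T$-degrees are bounded above by $\reg_\tt\rr(I)+1$ (cf.\ \cite{Trung}). Together with $\rt(I)=md$ this forces $\reg_\tt\rr(I)\ge md-1$, hence equality and all of the stated identities. I expect the only delicate points to be the bookkeeping of bigraded twists through (\ref{directsumseq}) — especially in the $\n$-grading — and the justification of the lower bound $\reg_\tt\rr(I)\ge\rt(I)-1$; the rest is a direct transcription of the computation already performed in the proof of \Cref{depth}.
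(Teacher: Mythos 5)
Your argument is correct and follows essentially the same route as the paper: the relation type and $\reg \ff(I)$ are read off from \Cref{mainresult} and \Cref{other form of A}, the upper bounds come from chasing (\ref{regseq1}), (\ref{directsumseq}), and (\ref{regseq2}) through the three gradings exactly as in the paper, and the lower bound uses the standard inequality $\rt(I)\leq \reg_\tt \rr(I)+1$ from \cite{Trung}, which the paper invokes as well known. The only divergence is cosmetic: the paper records the identification as $\overline{\A}\cong\overline{\K}^m(0,-1)$, whereas your degree-preserving isomorphism $\overline{\K}^m\cong\overline{\A}$ (justified by comparing the bidegrees of the minimal generators) is the version that actually yields the stated bounds $md-1$, $m-1$, $(m+1)d-1$.
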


\begin{proof}
The statement regarding the relation type follows immediately from \Cref{mainresult}. Additionally, the claim that $\reg \ff(I) = md-1$ is clear as $\ff(I) \cong k[T_1,\ldots,T_{d+1}]/(\mathfrak{f})$ where $\mathfrak{f}$ is of degree $md$. For the regularity of $\rr(I)$ with respect to $\tt$, it is well known that $\rt(I) -1 \leq \reg_\tt \rr(I)$ so it suffices to show $\reg_\tt \rr(I) \leq md-1$ and we show this and the other inequalities simultaneously.
We use (\ref{regseq1}) and (\ref{regseq2}) once more, noting that Castelnuovo-Mumford regularity is comparable on short exact sequences \cite{Eisenbud}. First note that as $J$ is of linear type, $\rr(J)$ is a complete intersection domain defined by forms linear in both $x_1,\ldots,x_{d+1}$ and $T_1,\ldots,T_{d+1}$. Additionally, note that $\ff(J) =\rr(J)/\m \rr(J)  \cong k[T_1,\ldots,T_{d+1}]$. With this, we have
\[
\begin{array}{lcl}
   {\rm reg}_\tt \rr(J) = 0,  &\quad &{\rm reg}_\tt \ff(J) = 0 \\[1ex]
     {\rm reg}_\m \rr(J) = 0,  & \quad &{\rm reg}_\m \ff(J) = 0\\[1ex]
     {\rm reg}_\n \rr(J) = d, &\quad  &{\rm reg}_\n \ff(J) = 0.\\
     \end{array}
\]
Thus from (\ref{regseq1}) we have 
\[
\begin{array}{ccc}
     {\rm reg}_\tt\, \m \rr(J)  \leq 1, &  {\rm reg}_\m \,\m \rr(J)  \leq 1, & {\rm reg}_\n\, \m \rr(J)  =d.\\
\end{array}
\]
For convenience write
\[
\begin{array}{ccc}
    M= \displaystyle{\bigoplus_{i=0}^{m-1}\m \rr(J)\big(-i,-(m-i)d\big)}, & \quad &  N= \displaystyle{\bigoplus_{i=0}^{m} \rr(J)\big(-i,-(m-i)d\big).}   \\ 
\end{array}
\]
With this and the above, we obtain
\[
\begin{array}{lcl}
   {\rm reg}_\tt M \leq md+1,  &\quad &{\rm reg}_\tt N = md \\[1ex]
     {\rm reg}_\m M \leq m,  & \quad &{\rm reg}_\m N = m\\[1ex]
     {\rm reg}_\n M \leq (m+1)d, &\quad  &{\rm reg}_\n N = (m+1)d.\\
     \end{array}
\]
Now using (\ref{directsumseq}), we have 
\[
\begin{array}{ccc}
     {\rm reg}_\tt \overline{\K}^m  \leq md, &  {\rm reg}_\m \overline{\K}^m  \leq m, & {\rm reg}_\n \overline{\K}^m \leq (m+1)d.\\
\end{array}
\]
Lastly, the inequalities above, the
bigraded isomorphism $\overline{\A} \cong \overline{\K}^m(0,-1)$, and the sequence (\ref{regseq2}) give
\[
\begin{array}{ccc}
     {\rm reg}_\tt \rr(I)  \leq md-1, &  {\rm reg}_\m \rr(I)  \leq m-1, & {\rm reg}_\n \rr(I) \leq (m+1)d-1 .  
\end{array}\qedhere
\]
\end{proof}

\section{Rees Algebras of Modules over Hypersurface Rings}\label[section]{modulesec}

We now consider Rees algebras of modules over hypersurface rings. For such a module $E$, we introduce a generic Bourbaki ideal $I$ which,  with some additional assumptions, is an ideal of height 2 in a hypersurface ring. We then relate the study of $\rr(E)$ to the study of $\rr(I)$, making use of the results from the previous section.

As the connection between almost linear presentation within polynomial rings and linear presentation within hypersurface rings has been established for ideals, we extend this to modules by following the approach of Section 5 of \cite{Costantini}. We take all conventions and notation from \cite{ReesAlgebrasOfModules} restated in \Cref{prelims} along with the construction of the generic Bourbaki ideal.

\subsection{Rees Algebras of Modules}

As before in the case of Rees algebras of ideals, the difficulty in the study of defining ideals of Rees algebras of modules is determining the nonlinear equations. To describe these equations, we produce constructions similar to those in \Cref{iterationssec}. Throughout this section we consider the situation below.

\begin{setting}\label[setting]{module setting}
Let $S =k[x_1,\ldots,x_{d+1}]$ be a polynomial ring over a field $k$ with $d\geq 2$, $f\in S$ a homogeneous polynomial of degree $m\geq 1$, and $R=S/(f)$. Let $E$ be a finite $R$-module minimally generated by $n$ homogeneous elements of the same degree. Further assume that $E$ has projective dimension one and hence a minimal free resolution of the form
\[
0 \longrightarrow R^{n-e}\overset{\varphi}{\longrightarrow} R^n \longrightarrow E\longrightarrow 0
\]
where $e$ denotes the rank of $E$. Lastly, assume that $\varphi$ consists of homogeneous linear entries in $R$. 
\end{setting}

After localizing at the homogeneous maximal ideal, $E$ admits a generic Bourbaki ideal $I$ which is necessarily perfect of grade $2$. Following the notation of \Cref{prelims}, we remark that $I$ is an ideal of $R''$ which is a hypersurface ring as $R$ is. Recall from \Cref{hypring} that for such an ideal $I$, it was imperative to return to a polynomial ring to study $\rr(I)$. Hence we introduce notation to permit this in the study of $\rr(E)$.

\begin{notation}
With $R$ and $S$ as in \Cref{module setting} and $Z$ the set of indeterminates from \Cref{module notation}, let $S'=S[Z]$ and $S'' = S(Z) = S'_{(\x)S'}$ and notice that $R' = S'/(f)S'$ and $R''=S''/(f)S''$. In an abuse of notation, let $\overline{\,\cdot\,}$ denote images modulo $(f)$ in $R$, $R'$, and $R''$ as it will be clear from context which ring is being considered. Additionally, let $\psi$ be an $n\times n-e$ matrix of linear entries in $S$ with $\varphi = \overline{\psi}$.
\end{notation}

Just as before, we have the notion of a modified Jacobian dual matrix. With $\psi$ as above, let $B(\psi)$ denote its Jacobian dual with respect to $\x=x_1,\ldots,x_{d+1}$. If $\partial f$ is any column matrix with entries in $S$ such that $\x \cdot \partial f =f$, let $B = [B(\psi)\,|\,\partial f]$ be a modified Jacobian dual of $\psi$ and set $\LL = (\x\cdot B)$. With this, we are able to perform both of the algorithms presented in \Cref{iterationssec}. Let $\LL+I_{d+1}(B_i)$ denote the ideal of matrix iterations with respect to $x_1,\ldots,x_{d+1}$ obtained by applying \Cref{defit} to $B$. Moreover, let $\BL_i +I_{d,1}(B(\psi),\BB_i)$ denote the ideal obtained by applying \Cref{modit} to the pair $(\LL,B)$. Once more, we refer to this second algorithm as the method of modified Jacobian dual iterations.

\begin{theorem}\label[theorem]{mainresultformodules}
With the assumptions of \Cref{module setting}, further assume that $E$ satisfies $G_d$, $n =d+e$, and $I_1(\varphi) = \overline{(x_1,\ldots,x_{d+1})}$. The defining ideal $\J$ of $\rr(E)$ satisfies 
$$\J =\overline{\LL+I_{d+1}(B_m)} = \overline{\BL_m + (\det \BB_m)}.$$
Additionally, $\rr(E)$ is Cohen-Macaulay if and only if $m=1$ and is almost Cohen-Macaulay otherwise.
\end{theorem}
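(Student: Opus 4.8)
The plan is to reduce everything to \Cref{mainresult} through a generic Bourbaki ideal. After localizing at the graded maximal ideal of $R$, the module $E$ is torsion-free and satisfies $\tilde{G}_2$: since $\projdim_R E=1$ and $R$ is Cohen--Macaulay, $E$ has no non-minimal associated primes, and $G_d$ (hence $G_2$, as $d\ge 2$) forces $\mu(E_\p)=e$ at every prime $\p$ with $\dim R_\p\le 1$, so $E$ is free in depth one. Thus by \Cref{gbiassumptions} we may form the generic Bourbaki ideal $I\cong E''/F''$, an ideal of the hypersurface ring $R''=S''/(f)S''$. I would then check that $I$ meets all hypotheses of \Cref{mainresult}: it is perfect of grade $2$ (from $0\to F''\to E''\to I\to 0$ one gets $\projdim_{R''}I\le 1$, and $\grd I\ge 2$ by the standard properties of generic Bourbaki ideals of modules of projective dimension one), it is linearly presented by a submatrix $\varphi_I$ of a presentation of $E''$ as in \Cref{gbipresentation}, it satisfies $G_d$ (inherited from $E$ via the Fitting-ideal formulation), $\mu(I)=n-e+1=d+1$ because $n=d+e$, and $I_1(\varphi_I)=\overline{(x_1,\dots,x_{d+1})}$ --- this last equality being the one point that requires a genericity argument in the $Z_{ij}$, carried out exactly as in \cite{Costantini}. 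By \Cref{mainresult} applied to $I$, its defining ideal is $\overline{\LL+I_{d+1}(B_m)}=\overline{\BL_m+(\det\BB_m)}$ (for the data attached to $\varphi_I$), and $\rr(I)$ is Cohen--Macaulay iff $m=1$ and almost Cohen--Macaulay otherwise.

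The technical heart is the deformation statement $\rr(E'')/(F'')\cong\rr(I)$ together with the fact that $y_1,\dots,y_{e-1}$ is a regular sequence on $\rr(E'')$. When $m=1$ this is immediate from \Cref{passCMness}, since then $\rr(I)$ is Cohen--Macaulay. When $m\ge 2$, $\rr(I)$ is only almost Cohen--Macaulay and $E$ is not of linear type, so neither hypothesis of \Cref{passCMness}(b) is available, and I would instead follow the argument of \cite[Section 5]{Costantini}, which establishes the deformation with no Cohen--Macaulayness input: $\sym(E'')/(\underline{y})\cong\sym(I)$ for free, the generic linear forms $y_j$ form a regular sequence on $\rr(E'')$ because the $Z_{ij}$ are indeterminates and $\rr(E'')$ has enough depth (read off from its presentation), and $\rr(E'')/(\underline{y})$ is then identified with $\rr(I)$ by showing it is torsion-free over $R''$. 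This is where I expect the main difficulty to lie.

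Granting the deformation, I would recover $\J$ as follows. Form $B=[B(\psi)\,|\,\partial f]$ and the iterations $(\BB_i,\BL_i)$ for a linear lift $\psi$ of $\varphi$, choosing the matrices at each stage compatibly with those attached to $\varphi_I$ so that reduction modulo the linear forms $Y_j=\sum_i Z_{ij}T_i$ carries $\mathcal{K}:=\overline{\LL+I_{d+1}(B_m)}$ onto the defining ideal of $\rr(I)$; this ``same shape'' bookkeeping is routine but does require care with the non-canonical choices in \Cref{defit} and \Cref{modit}. Cramer's rule gives $\mathcal{K}\subseteq\J$, and the previous step together with the deformation gives $\mathcal{K}+(\underline{Y})=\J+(\underline{Y})$ in $R''[\underline{T}]$. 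Since $\underline{Y}$ (equivalently $\underline{y}$) is a regular sequence on $\rr(E'')=R''[\underline{T}]/\J$, tensoring $0\to\J/\mathcal{K}\to R''[\underline{T}]/\mathcal{K}\to\rr(E'')\to 0$ with the Koszul complex on $\underline{Y}$ and using that its higher homology on $\rr(E'')$ vanishes shows $(\J/\mathcal{K})\big/\underline{Y}(\J/\mathcal{K})=0$, hence $\J=\mathcal{K}$ by graded Nakayama; the identical argument with modified Jacobian dual iterations yields $\J=\overline{\BL_m+(\det\BB_m)}$. Finally, \Cref{passCMness}(a)(i) gives that $\rr(E)$ is Cohen--Macaulay iff $\rr(I)$ is, i.e.\ iff $m=1$; and for $m\ge 2$ the deformation gives $\dep\rr(E'')=\dep\rr(I)+(e-1)=d+e-1=\dim\rr(E'')-1$, so $\dep\rr(E)=\dim\rr(E)-1$ by faithfully flat base change along $R\to R''$ (closed fibre $k(Z)$ a field), and $\rr(E)$ is almost Cohen--Macaulay.
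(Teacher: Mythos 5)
Your proposal follows essentially the same route as the paper's proof: pass to a generic Bourbaki ideal and apply \Cref{mainresult}, replace \Cref{passCMness}(b) when $m\geq 2$ by Costantini's deformation result (using linear type on the punctured spectrum and the depth bound $\dep \rr(I)\geq d\geq 2$), match the modified Jacobian dual iterations modulo $(Y_1,\ldots,Y_{e-1})$ (the paper packages this as \Cref{jacobianduallemma}), and conclude $\J=\overline{\BL_m+(\det \BB_m)}$ by the regular-sequence/Nakayama argument, with \Cref{passCMness}(a) and a depth count giving the (almost) Cohen--Macaulay statements. This matches the paper's argument in all essential points.
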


\begin{proof}
First note that if $e=1$, then $E$ is isomorphic to a perfect $R$-ideal of grade 2 and the result follows from \Cref{mainresult}, hence we may assume $e\geq 2$. Let $a_1,\ldots, a_n$ denote a minimal generating set of $E$ corresponding to $\varphi$ and consider the natural epimorphism $R[T_1,\ldots,T_n] \rightarrow \rr(E)$ mapping $T_i \mapsto a_i\in [\rr(E)]_1$ for $1\leq i\leq n$. After replacing $S$ and $R$ by their localizations at their respective homogeneous maximal ideals, we may assume that $S$ and $R$ are local rings and $E$ admits a generic Bourbaki ideal $I$. With this, $I$ is necessarily perfect of grade 2 and $\mu(I) = n-e+1 = d+1$.

Now with $y_j$ as in \Cref{module notation}, let $Y_j = \sum_{i=1}^n Z_{ij} T_i$ for $1\leq j\leq e-1$ and note that $Y_j$ maps to $y_j$ under the natural epimorphism $R''[T_1,\ldots,T_n] \rightarrow \rr(E'')$. By \Cref{gbipresentation}, there is a minimal presentation $\varphi_I$ of $I$ such that 
$$\x\cdot B(\varphi)  \equiv  \T \cdot \begin{bmatrix} \hspace{2mm}0\hspace{2mm} \\
\varphi_I\end{bmatrix} \mod (Y_1,\ldots,Y_{e-1}).$$
Now there exists a matrix $\psi_I$ with linear entries in $S''$ such that $\overline{\psi_I} =\varphi_I$ and  
$$\x\cdot B(\psi)  \equiv  \T \cdot \begin{bmatrix} \hspace{2mm}0\hspace{2mm} \\
\psi_I \end{bmatrix} \mod (Y_1,\ldots,Y_{e-1}).$$
Now with this matrix $\psi_I$, we construct the modified Jacobian dual of $\psi_I$ and use \Cref{mainresult} to describe the defining ideal of $\rr(I)$. As a matter of notation we continue to write $f$ for its image in $S''$ and $\partial f$ for a column such that $\x \cdot \partial f =f$. After choosing such a column, we take $B_I = [B(\psi_I)\,|\,\partial f]$ to be a modified Jacobian dual of $\psi_I$, hence by \Cref{mainresult} the defining ideal of $\rr(I)$ is the $R''[T_1,\ldots,T_n]$-ideal 
$$\J_I =\overline{\BL_{I,m} + (\det\BB_{I,m})}$$
 where $(\BB_{I,m},\BL_{I,m})$ is the $m^{\text{th}}$ modified Jacobian dual iteration of $(B_I,\LL_I)$ for $\LL_I=(\x\cdot B_I)$. Additionally, $\rr(I)$ is Cohen-Macaulay if and only if $m=1$ and is almost Cohen-Macaulay otherwise. Thus by \Cref{passCMness}, $\rr(E)$ is Cohen-Macaulay if and only if $m=1$.

As $E''$ is of projective dimension one and satisfies $G_d$, by either \cite[3 and 4]{Avramov} or \cite[1.1]{Huneke}, it follows that $E''$ is of linear type on the punctured spectrum of $R''$, hence $I$ is as well. Additionally, note that $\dep \rr(I) \geq \dim \rr(I) -1 = d\geq 2$. Thus by induction on $e\geq 2$ and using \cite[3.1]{Costantini} repeatedly, we find that $\rr(I) \cong \rr(E'')/(F'')$ and $y_1,\ldots,y_{e-1}$ forms a regular sequence on $\rr(E'')$. Thus $Y_1,\ldots,Y_{e-1}$ forms a regular sequence modulo $\J R''[T_1,\ldots,T_{d+e}]$. Hence $\rr(E'')$ is almost Cohen-Macaulay when $m\geq 2$ and with this, $\rr(E)$ is almost Cohen-Macaulay when $m\geq 2$. This also shows
$$\J_I = \J R''[T_1,\ldots,T_{d+e}] + (Y_1,\ldots,Y_{e-1}).$$
Now with this, \Cref{mainresult}, and \Cref{jacobianduallemma} we have
$$\J R''[T_1,\ldots,T_{d+e}] +(Y_1,\ldots,Y_{e-1}) = \overline{\BL_m + (\det \BB_m)}+(Y_1,\ldots,Y_{e-1}).$$
Now $E$ is of linear type on the punctured spectrum of $R$, from which it follows that 
\begin{equation}\label{contain}
\overline{\BL_m + (\det \BB_m)} \subseteq \overline{\LL + I_{d+1}(B_m)} \subseteq \overline{\LL:(x_1,\ldots,x_{d+1})^m} \subseteq \J.
\end{equation}
With this and the fact that $Y_1,\ldots,Y_{e-1}$ forms a regular sequence modulo the ideal $\J R''[T_1,\ldots,T_{d+e}]$, we have
$$\J =  \big( \,\overline{\BL_m + (\det \BB_m)} +(Y_1,\ldots, Y_{e-1}) \big) \cap \J = \overline{\BL_m + (\det \BB_m)} +(Y_1,\ldots, Y_{e-1}) \J.$$
Hence by Nakayama's lemma, we have that 
$$\J = \overline{\BL_m + (\det \BB_m)}.$$
This equality and (\ref{contain}) show that $\J=\overline{\LL + I_{d+1}(B_m)}$ as well.
\end{proof}


\begin{lemma}\label[lemma]{jacobianduallemma}
Adopt the setting and notation of \Cref{mainresultformodules}. Letting $B$ and $B_I$ denote the respective modified Jacobian dual matrices with $\LL =(\x\cdot B)$ and $\LL_I =(\x\cdot B_I)$, we have
$$\BL_m  +(\det \BB_m) +(Y_1,\ldots,Y_{e-1}) = \BL_{I,m}  +(\det \BB_{I,m})$$
where $(\BB_m,\BL_m)$ and $(\BB_{I,m},\BL_{I,m})$ are the modified Jacobian dual iterations of the pairs $(B,\LL)$ and $(B_I,\LL_I)$ respectively.
\end{lemma}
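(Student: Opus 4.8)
The plan is to prove, by induction on the iteration step $i$, that the modified Jacobian dual iterations of $(B,\LL)$ and $(B_I,\LL_I)$ may be chosen so that $\BB_i$ reduces to $\BB_{I,i}$, and $\BL_i$ to $\BL_{I,i}$, modulo $\q\od(Y_1,\ldots,Y_{e-1})$, for all $1\le i\le m$; evaluating at $i=m$ then gives the assertion (the case $e=1$ being trivial, since then $\q=0$, $\psi_I=\psi$ and $B_I=B$). Here, as in \Cref{mainresult} and \Cref{other form of A} applied to the generic Bourbaki ideal $I$ (which satisfies the hypotheses of \Cref{mainresult} as in the proof of \Cref{mainresultformodules}), both iterations are simple: because $n=d+e$, all the matrices involved are $(d+1)\times(d+1)$, so $I_{d,1}(B(\psi),\BB_i)=(\det\BB_i)$ and $I_{d,1}(B(\psi_I),\BB_{I,i})=(\det\BB_{I,i})$; each $\det\BB_i$ (resp.\ $\det\BB_{I,i}$) is a form of $\x$-degree $m-i$ lying in $(x_1,\ldots,x_{d+1})$ for $i<m$; and, provided $\det\BB_i\notin\BL_i$, the $(i+1)^{\text{st}}$ step consists of setting $\BB_{i+1}=[B(\psi)\,|\,\partial(\det\BB_i)]$ and $\BL_{i+1}=\BL_i+(\det\BB_i)$, where $\partial(\det\BB_i)$ is any column $C$ with $[x_1\ldots x_{d+1}]\cdot C=\det\BB_i$; the ideal $\BL_i+(\det\BB_i)$ is independent of such choices, as remarked after \Cref{modit}.

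Let $\pi$ denote the quotient map $S''[T_1,\ldots,T_n]\to S''[T_1,\ldots,T_n]/\q$. Because $Z$ is a generic matrix, $Y_1,\ldots,Y_{e-1}$ form part of a system of coordinates for $S''[T_1,\ldots,T_n]$ over $S''$ that is disjoint from $x_1,\ldots,x_{d+1}$; hence $S''[T_1,\ldots,T_n]/\q$ is again a polynomial ring over $S''$ in which $x_1,\ldots,x_{d+1}$ remains a regular sequence of algebraically independent elements, and I identify it with $S''[T_e,\ldots,T_n]$. As recalled in the proof of \Cref{mainresultformodules}, \Cref{gbipresentation} gives $[x_1\ldots x_{d+1}]\cdot B(\psi)\equiv[x_1\ldots x_{d+1}]\cdot B(\psi_I)\pmod{\q}$, whence $\pi(\LL)=\LL_I$, since $\LL$ and $\LL_I$ are generated by these linear forms together with $f$. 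Setting $D=B(\psi)-B(\psi_I)$, a matrix whose entries are linear in the $T_j$ and of $\x$-degree $0$, the congruence $[x_1\ldots x_{d+1}]\cdot D\equiv0\pmod{\q}$ says that each column of $\pi(D)$ is an $\x$-degree-$0$ syzygy of the algebraically independent elements $x_1,\ldots,x_{d+1}$ in $S''[T_e,\ldots,T_n]$, which forces $\pi(D)=0$; that is, $\pi(B(\psi))=B(\psi_I)$. Taking the column $\partial f$ identically in $B$ and $B_I$ then gives $\pi(B)=B_I$, so $\pi(\BB_1)=\BB_{I,1}$ and $\pi(\BL_1)=\LL_I=\BL_{I,1}$: the base case.

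For the inductive step assume $\pi(\BB_{i-1})=\BB_{I,i-1}$ and $\pi(\BL_{i-1})=\BL_{I,i-1}$, so $\pi(\det\BB_{i-1})=\det\BB_{I,i-1}$; since $\det\BB_{I,i-1}\notin\BL_{I,i-1}$ by the proof of \Cref{other form of A} applied to $I$, also $\det\BB_{i-1}\notin\BL_{i-1}$, and the $(B,\LL)$-iteration indeed adjoins the single column $\partial(\det\BB_{i-1})$. Pick such a column for the $(B,\LL)$-iteration and use $\pi(\partial(\det\BB_{i-1}))$ as the column $\partial(\det\BB_{I,i-1})$ in the $(B_I,\LL_I)$-iteration; this is permissible because $[x_1\ldots x_{d+1}]\cdot\pi(\partial(\det\BB_{i-1}))=\pi(\det\BB_{i-1})=\det\BB_{I,i-1}$ and the iterated ideal is choice-independent. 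Then $\pi(\BB_i)=[B(\psi_I)\,|\,\partial(\det\BB_{I,i-1})]=\BB_{I,i}$ and $\pi(\BL_i)=\pi(\BL_{i-1})+(\pi(\det\BB_{i-1}))=\BL_{I,i-1}+(\det\BB_{I,i-1})=\BL_{I,i}$, completing the induction. At $i=m$ this yields $\pi(\BL_m+(\det\BB_m))=\BL_{I,m}+(\det\BB_{I,m})$ in $S''[T_1,\ldots,T_n]/\q$, equivalently $\BL_m+(\det\BB_m)+(Y_1,\ldots,Y_{e-1})=\BL_{I,m}+(\det\BB_{I,m})+(Y_1,\ldots,Y_{e-1})$, which is the form of the identity used in the proof of \Cref{mainresultformodules}. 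I expect the main obstacle to be the congruence $\pi(B(\psi))=B(\psi_I)$: one must genuinely exploit that the generic linear forms $Y_1,\ldots,Y_{e-1}$ are part of a coordinate system—so that reduction modulo $\q$ preserves the regularity and algebraic independence of $x_1,\ldots,x_{d+1}$—rather than merely that $\pi$ is a ring homomorphism, and must keep careful track of the $\x$- and $T$-bigradings throughout.
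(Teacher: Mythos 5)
Your proof is correct and follows essentially the same route as the paper: induction on the iteration index, choosing the appended column at each step compatibly so that the data of the $(B,\LL)$-iteration reduces, modulo $(Y_1,\ldots,Y_{e-1})$, to that of the $(B_I,\LL_I)$-iteration, and invoking the choice-independence of the iterated ideals. Your additional care in verifying $\pi(B(\psi))=B(\psi_I)$ entrywise and in stating the conclusion with $(Y_1,\ldots,Y_{e-1})$ added to both sides is consistent with (indeed slightly more precise than) the paper's argument, and that is exactly the form of the identity used in the proof of \Cref{mainresultformodules}.
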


\begin{proof}
We show $\BL_i  +(\det \BB_i) +(Y_1,\ldots,Y_{e-1}) = \BL_{I,i}  +(\det \BB_{I,i})$ for all $1\leq i\leq m$ by induction on $i$. With $B(\psi_I)$ the Jacobian dual of $\psi_I$, we have 
$$\x\cdot B(\psi_I)  =  \T \cdot \begin{bmatrix} \hspace{2mm}0\hspace{2mm} \\
\psi_I \end{bmatrix}$$
as in the proof of \Cref{mainresultformodules}. Now in $S''[T_1,\ldots,T_{d+e}]$, we see 
$$\x\cdot B  \equiv  \x \cdot B_I \mod (Y_1,\ldots,Y_{e-1})$$
and so the statement is proved for $i=1$ as $\BL_1=(\x\cdot B)$, $\BL_{I,1}=(\x\cdot B_I)$, $\BB_1=B$, and $\BB_{I,1}=B_I$. Now suppose that $m\geq 2$ and the statement holds for all $i$ with $1\leq i\leq m-1$. Following the definition of $\BB_m$, let
$$\BL_{m-1} + (\det\BB_{m-1}) = \BL_{m-1} + (\x\cdot C)$$
for a column $C$ as in \Cref{modit}. Since the matrices $\BB_i$ are bigraded, modulo $(Y_1,\ldots, Y_{e-1})$ we have 
$$\BL_{I,m-1} + (\det \BB_{I,m-1}) = \BL_{I,m-1} + (\x \cdot C')$$
following the induction hypothesis, where $C'$ denotes the image of $C$ modulo the ideal $(Y_1,\ldots,Y_{e-1})$. Now take $\BB_{I,m}= [B(\psi)\,|\,C]$ and recall that $\BL_m =\BL_{m-1} + (\det\BB_{m-1})$ and $\BL_{I,m}=\BL_{I,m-1} + (\det \BB_{I,m-1})$. With this, in $S''[T_1,\ldots,T_{d+e}]$ we have
$$\BL_m  +(\det \BB_m) +(Y_1,\ldots,Y_{e-1}) = \BL_{I,m}  +(\det \BB_{I,m})$$
as claimed.
\end{proof}

\begin{remark}
Notice that the result of \cite[4.11]{ReesAlgebrasOfModules} (in the case $\mu(E)=d+e$) is recovered when $m=1$. After a change of coordinates in may be assumed that $f=x_{d+1}$ and so $R\cong k[x_1,\ldots,x_d]$. The result then follows in a similar manner to \Cref{f linear case}.
\end{remark}

\begin{remark}\label{differantialsformodules}
 When $k$ is a field of characteristic zero or the characteristic of $k$ is strictly larger than $m$, the equations defining $\rr(E)$ can be described using differentials once more as in \Cref{differentials}.
\end{remark}

\begin{corollary}\label[corollary]{CMnessofF(E)}
In the setting of \Cref{mainresultformodules}, $\ff(I) \cong k[T_1,\ldots,T_{d+e}]/(\mathfrak{f})$ where $\deg \mathfrak{f} = md$. In particular, $\ff(E)$ is Cohen-Macaulay.
\end{corollary}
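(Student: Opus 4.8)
The plan is to read the special fiber ring directly off the defining ideal produced in \Cref{mainresultformodules}, using $\J=\overline{\BL_m+(\det\BB_m)}$ together with the bigraded degree bookkeeping of \Cref{other form of A}, and to import the corresponding statement for $\ff(I)$ from \Cref{mainresult}.

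First I would compute $\ff(E)$ by reducing $\J$ modulo $\m=(x_1,\ldots,x_{d+1})$, so that $\ff(E)\cong k[T_1,\ldots,T_{d+e}]/\mathfrak{a}$, where $\mathfrak{a}$ is the image of $\J$ under $R[T_1,\ldots,T_{d+e}]\twoheadrightarrow k[T_1,\ldots,T_{d+e}]$. I would then check that every generator of $\J$ other than $\det\BB_m$ maps to $0$: adopting the bigrading $\deg x_i=(1,0)$, $\deg T_i=(0,1)$, the relations $\ell_1,\ldots,\ell_{n-e}$ are linear in the $x_i$, the polynomial $f$ lies in $\m^m$, and each intermediate iterate $\det\BB_i$ with $1\le i\le m-1$ has bidegree $(m-i,id)$, hence positive $x$-degree. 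The remaining generator $\det\BB_m$ is bihomogeneous of bidegree $(0,md)$, and it is nonzero exactly as in the proof of \Cref{mainresult}, being the unique defining equation pure in the $T_i$ while $\ell(E)<d+e$. Therefore $\mathfrak{a}=(\det\BB_m)$ and $\ff(E)\cong k[T_1,\ldots,T_{d+e}]/(\mathfrak{f})$ with $\mathfrak{f}=\det\BB_m$ of degree $md$; being a hypersurface quotient of a polynomial ring, $\ff(E)$ is Cohen-Macaulay. The analogous description of $\ff(I)$ is \Cref{mainresult} applied to the generic Bourbaki ideal $I$ of $E$, whose hypotheses are verified in the proof of \Cref{mainresultformodules}; in particular $\ff(I)$ is likewise a hypersurface ring, hence Cohen-Macaulay, consistent with the deformation $\rr(E'')/(F'')\cong\rr(I)$ through which $\ff(I)$ is the quotient of $\ff(E'')$ by the generic linear forms $Y_j=\sum_i Z_{ij}T_i$.

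I do not expect a serious obstacle; the argument is bookkeeping. The one point to verify with care is that the shapes and bidegrees of $B(\psi)$ and of the iterates $\BB_i$ are the same as in the ideal case, so that \Cref{other form of A} applies here verbatim; this holds because the hypothesis $n=d+e$ forces $n-e=d$, whence $B=[B(\psi)\,|\,\partial f]$ and every $\BB_i$ is a $(d+1)\times(d+1)$ matrix, exactly as in \Cref{setting1.5}. From this the claim that $\det\BB_m$ is the unique generator of $\J$ pure in the $T_i$, of $T$-degree $md$, follows as in the ideal case.
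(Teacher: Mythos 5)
Your proposal is correct and follows essentially the same route as the paper: reduce $\J=\overline{\BL_m+(\det\BB_m)}$ modulo $(x_1,\ldots,x_{d+1})$ and observe, by the bidegree bookkeeping of \Cref{other form of A}, that $\det\BB_m$ is the only generator pure in the $T_i$, of degree $md$, so the special fiber is a hypersurface ring and hence Cohen--Macaulay. Your extra checks (that $n=d+e$ makes the $\BB_i$ square of size $d+1$ as in the ideal case, and that $\det\BB_m\neq 0$ since $\ell(E)\leq d+e-1$) are exactly the details the paper's terse proof leaves implicit.
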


\begin{proof}
By \Cref{mainresultformodules}, $\J =\overline{\BL_m + (\det \BB_m)}$ and we see the only equation of $\J$ pure in the variables $T_1,\ldots,T_{d+e}$ is $\overline{\det \BB_m}$. Checking degrees similarly as in the proof of \Cref{other form of A}, we see that $\ff(E)$ is indeed a hypersurface ring defined by an equation of degree $md$.
\end{proof}

\begin{remark}
 A similar argument to the proof of \Cref{other form of A} shows that $\J = \overline{\BL_m + (\det \BB_m)}$ is minimally generated and $\mu(\J)= d+m$. Additionally, by checking degrees as before, the relation type is seen to be $\rt(E) = md$.
\end{remark}

In the proof of \Cref{mainresultformodules}, it was shown that $\rr(E'')$ is a deformation of $\rr(I)$. With this we may investigate the regularity of $\rr(E)$. As before, let $\m = (x_1,\ldots,x_{d+1})$, $\tt=(T_1,\ldots,T_{d+1})$, and $\n=(x_1,\ldots,x_{d+1},T_1,\ldots,T_{d+1})$ and consider the three different gradings as in \Cref{rtandreg}.

\begin{theorem}\label[theorem]{regofR(E)}
In the setting of \Cref{mainresultformodules}, ${\rm reg}_\tt \rr(E) = md-1$, $\reg_\m \rr(E) \leq m-1$, and $\reg_\n \rr(E) \leq (m+1)d$. Additionally, ${\rm reg} \ff(E) =md-1$.
\end{theorem}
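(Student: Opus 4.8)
The plan is to transfer all the regularity estimates already obtained for $\rr(I)$ in \Cref{rtandreg} down to $\rr(E)$ via the deformation established in the proof of \Cref{mainresultformodules}, namely that $\rr(I) \cong \rr(E'')/(F'')$ with $y_1,\ldots,y_{e-1}$ (equivalently $Y_1,\ldots,Y_{e-1}$, which are linear in the $T_i$ and of $\x$-degree $0$) forming a regular sequence on $\rr(E'')$. First I would recall that passing from $R$ to $R'' = R(Z)$ is a faithfully flat base change which does not affect any of the three regularities in question, since adjoining the indeterminates $Z$ (with $\deg Z_{ij}=0$ in all three gradings) and localizing at the extended maximal ideal commutes with local cohomology and leaves the relevant graded pieces unchanged; so it suffices to compute $\reg_\bullet \rr(E'')$ for $\bullet \in \{\m,\tt,\n\}$. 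Then I would quotient $\rr(E'')$ by the regular sequence $Y_1,\ldots,Y_{e-1}$: each $Y_j$ has $\tt$-degree $1$, $\m$-degree $0$, and $\n$-degree $1$, so killing a regular element $Y_j$ shifts each local cohomology module by the appropriate twist and the standard short exact sequence $0\to \rr(E'')(\text{shift})\xrightarrow{Y_j}\rr(E'')\to \rr(E'')/(Y_j)\to 0$ relates the regularities by exactly $\reg_\tt \rr(E'')/(Y_1,\ldots,Y_{e-1}) = \reg_\tt \rr(E'') - 0$ in the $\tt$-grading up to the twist bookkeeping — more precisely, modding out by $e-1$ regular linear-in-$\tt$ forms changes $\reg_\tt$ by $0$ on the nose (the twist $(0,-1)$ per element is compensated), and similarly one tracks the $\n$- and $\m$-gradings. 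Concretely this gives $\reg_\tt \rr(E'') = \reg_\tt \rr(I) = md-1$, $\reg_\m \rr(E'') = \reg_\m \rr(I) \le m-1$, and $\reg_\n \rr(E'')$ equals $\reg_\n \rr(I)$ up to the contribution of the $e-1$ linear forms, which is $\le (m+1)d-1 + (e-1)$; since $n = d+e$ one must check this is still $\le (m+1)d$, so I would instead argue directly that modding out by a regular sequence of forms that are linear in the total grading does not increase $\reg_\n$, giving $\reg_\n \rr(E) \le \reg_\n \rr(I) \le (m+1)d-1 \le (m+1)d$.

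For the equality $\reg_\tt \rr(E) = md-1$ (rather than just an inequality), I would use the lower bound coming from the relation type: just as in the proof of \Cref{rtandreg}, one has $\rt(E)-1 \le \reg_\tt \rr(E)$, and the preceding remark records $\rt(E) = md$, so $\reg_\tt \rr(E) \ge md-1$; combined with the upper bound this forces equality. Finally, the statement $\reg \ff(E) = md-1$ is immediate from \Cref{CMnessofF(E)}: there $\ff(E) \cong k[T_1,\ldots,T_{d+e}]/(\mathfrak f)$ is a hypersurface defined by a single form of degree $md$, whose Castelnuovo--Mumford regularity is exactly $md-1$.

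The genuine care needed — and the main obstacle — is the bookkeeping of the graded twists when modding out by the regular sequence $Y_1,\ldots,Y_{e-1}$ in three different gradings simultaneously, together with confirming that the faithfully flat base change $R \rightsquigarrow R''$ is innocuous for each of $\reg_\m$, $\reg_\tt$, $\reg_\n$. For the $\tt$-grading the $Y_j$ are genuinely linear forms on the polynomial part, so the regular-sequence argument is clean; for the $\m$-grading the $Y_j$ have degree $0$, so one should instead observe that the $F''$-quotient is a deformation in the classical sense and invoke that $\reg_\m$ is unchanged under going modulo a regular sequence of degree-$0$ (in the $\m$-grading) elements, or equivalently compute $\reg_\m$ of $\rr(E'')$ directly from the exact sequences (\ref{regseq1}), (\ref{regseq2}) pulled back to $S''[\T]$ exactly as in \Cref{rtandreg}. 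I expect that in fact the cleanest write-up reruns the argument of \Cref{rtandreg} verbatim over $S''[T_1,\ldots,T_{d+e}]$ using $\rr(J'')$ in place of $\rr(J)$ — all the structural inputs (that $J''$ is of linear type, $\rr(J'')$ a complete intersection, $\ff(J'')$ a polynomial ring, $\overline{\K}^m$ fitting in the analogue of (\ref{directsumseq})) survive the generic extension — and then descends to $\rr(E)$ by the flat base change; this avoids the delicate twist-tracking for the regular sequence entirely.
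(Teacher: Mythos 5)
Your proposal is correct and follows essentially the same route as the paper: transfer the bounds of \Cref{rtandreg} from $\rr(I)$ to $\rr(E'')$ via the deformation $\rr(I)\cong\rr(E'')/(F'')$, using that the $Y_j$ form a regular sequence of bidegree $(0,1)$ forms (so $\reg_\tt$ and $\reg_\n$ are preserved and $\reg_\m$ can only drop), descend by the innocuous generic extension $R\rightsquigarrow R''$, and read off $\reg\ff(E)$ from the hypersurface description in \Cref{CMnessofF(E)}. The only cosmetic difference is that you obtain the equality $\reg_\tt\rr(E)=md-1$ from the relation-type lower bound $\rt(E)-1\le\reg_\tt\rr(E)$, whereas the paper gets it directly from exact preservation of $\reg_\tt$ under killing regular linear forms; both are fine.
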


\begin{proof}
From the proof \Cref{mainresultformodules}, there is an isomorphism $\rr(I)\cong \rr(E'')/(F'')$. As $F''$ is a free $R''$-module generated by forms of bidegree $(0,1)$, it follows that ${\rm reg}_\tt \rr(E'') = {\rm reg}_\tt \rr(I)$ and $\reg_\n \rr(E'') =\reg_\n \rr(I)$. The short exact sequence
$$0 \longrightarrow (F'')\longrightarrow \rr(E'') \longrightarrow \rr(I) \longrightarrow 0$$
shows $\reg_\m \rr(E'')\leq \reg_\m \rr(I)$. Now using \Cref{rtandreg}, we see
\[
\begin{array}{ccc}
     {\rm reg}_\tt \rr(E'')  = md-1, &  {\rm reg}_\m \rr(E'')  \leq m-1, & {\rm reg}_\n \rr(E'') \leq (m+1)d-1  \\
\end{array}
\]
and we note that regularity is unchanged when passing from $\rr(E)$ to $\rr(E'')$. Lastly, the regularity of the special fiber ring is clear as $\ff(E)$ is a hypersurface ring defined by an equation of degree $md$.
\end{proof}

\section*{Acknowledgements}

The author would like to thank his Ph.D. advisor Bernd Ulrich for many insightful discussions and comments on the results presented here, without which this work would not have been possible. Additionally, the author would like to thank Alessandra Costantini for many enlightening conversations and helpful suggestions allowing the results in \Cref{modulesec} to come to fruition.


\end{document}